\newtheorem{theorem}{Theorem}
\newtheorem{lemma}[theorem]{Lemma}
\newtheorem{proposition}[theorem]{Proposition}
\newtheorem{corollary}[theorem]{Corollary}
\theoremstyle{definition}
\newtheorem{remark}[theorem]{Remark}
\setlist[enumerate]{label=$\rm{(\roman*)}$,leftmargin=\parindent,wide, labelwidth=!, labelindent=0pt}
\numberwithin{equation}{section}
\numberwithin{theorem}{section}
\numberwithin{table}{section}
\numberwithin{figure}{section}
\newcommand{\norm}[1]{\left\lVert #1 \right\rVert}
\newcommand{\abs}[1]{\left\lvert #1 \right\rvert}
\newcommand{\scal}[1]{\left\langle #1 \right\rangle}
\newcommand{\seq}[1]{\left\lbrace #1 \right\rbrace}
\newcommand{\sR}{\mathbb{R}}
\newcommand{\sL}{\mathbb{L}}
\newcommand{\sH}{\mathcal{H}}
\newcommand{\sG}{\mathcal{G}}
\newcommand{\Fix}{\mathsf{Fix}}
\newcommand{\zer}{\textsf{Zer}}
\newcommand{\mysum}{\displaystyle \sum\limits}
\newcommand{\proj}{\mathsf{P}}
\newcommand{\dist}{\mathsf{d}}
\newcommand{\Id}{\mathrm{Id}}
\newcommand{\bO}{\mathcal{O}}
\newcommand{\BP}{Banach-Picard }
\newcommand{\KM}{Krasnosel’ski\u{\i}-Mann }
\newcommand{\E}{\mathcal{E}}
\newcommand{\tcr}{\textcolor{red}}
\title{Tikhonov regularization of monotone operator flows not only ensures strong convergence of the trajectories but also speeds up the vanishing of the residuals}
\author{Radu Ioan Bo\c{t}
	\footnote{Faculty of Mathematics, University of Vienna, Oskar-Morgenstern-Platz 1, 1090 Vienna, Austria, email: \url{radu.bot@univie.ac.at}. Research partially supported by FWF (Austrian Science Fund), projects W 1260 and P 34922-N.}
\and Dang-Khoa Nguyen
	\footnote{Faculty of Mathematics and Computer Science, University of Science, Ho Chi Minh City, Vietnam, email: \url{ndkhoa@hcmus.edu.vn}.}
	\footnote{Vietnam National University, Ho Chi Minh City, Vietnam.  Research funded by the Vietnam National University,  Ho Chi Minh City (VNU-HCM) under grant number T.C2025-18-04.}}
\begin{document}

\maketitle	

\begin{abstract}
In the framework of real Hilbert spaces, we investigate first-order dynamical systems governed by monotone and continuous operators.  It has been established that for these systems, only the ergodic trajectory converges to a zero of the operator. A notable example is the counterclockwise $\pi/2$-rotation operator on $\mathbb{R}^2$, which illustrates that,  in general, trajectory convergence cannot be expected.  However, trajectory convergence is assured for operators with the stronger property of cocoercivity.  For this class of operators,  the trajectory's velocity and the opertor values along the trajectory converge in norm to zero at a rate of $o(\frac{1}{\sqrt{t}})$ as $t \rightarrow + \infty$.

In this paper, we demonstrate that when the monotone operator flow is augmented with a Tikhonov regularization term, the resulting trajectory converges strongly to the element of the set of zeros with minimal norm.  In addition,  rates of convergence in norm for the trajectory's velocity and the operator along the trajectory can be derived in terms of the regularization function.  In some particular cases,  these rates of convergence can outperform the ones of the coercive operator flows and can be as fast as $\bO(\frac{1}{t})$ as $t \rightarrow + \infty$.  In this way, we emphasize a surprising acceleration feature of the Tikhonov regularization.  Additionally, we explore these properties for monotone operator flows that incorporate time rescaling and an anchor point. For a specific choice of the Tikhonov regularization function, these flows are closely linked to second-order dynamical systems with a vanishing damping term. The convergence and convergence rate results we achieve for these systems complement recent findings for the Fast Optimistic Gradient Descent Ascent (OGDA) dynamics, leading to surprising outcomes.

When the monotone operator is defined as the identity minus a nonexpansive operator, the monotone equations transform into a fixed point problem. In such cases, explicitly discretizing the system with Tikhonov regularization, enhanced by an anchor point, leads to the Halpern fixed point iteration. Through convergence analysis, which can be viewed as the discrete counterpart of the continuous time analysis, we identify two regimes for the regularization sequence which ensure that the generated sequence of iterates converges strongly to the fixed point nearest to the anchor point.  Furthermore, we establish a general theoretical framework that provides convergence rates for the vanishing of the discrete velocity and the fixed point residual.  For certain regularization sequences, we derive specific convergence rates that align with those observed in continuous time.
\end{abstract}	

\noindent \textbf{Key Words.}
monotone equations,
Tikhonov regularization,
strong convergence,
convergence rates,
Lyapunov analysis, fixed point iteration,  nonexpansive operator,  averaged operator
\vspace{1ex}

\noindent \textbf{AMS subject classification.} 
37L05,  47J20,  47H05,  65K15

\tableofcontents

\section{Introduction}\label{sec1}

Let $\sH$ be a real Hilbert space and $M \colon \sH \to \sH$ a continuous and monotone operator.  We say that $M \colon \sH \to \sH$ is \emph{monotone} if
\begin{equation*}
\scal{M \left( x \right) - M \left( y \right) , x - y} \geq 0 \quad \forall x, y \in \sH.
\end{equation*}
Being monotone and continuous,  $M$ is also maximally monotone (\cite{Bauschke-Combettes:book}),  which means that there is no other (set-valued) monotone operator with the graph in $\sH \times \sH$ being a strict supset of the graph of $M$.  Therefore, the set $\zer M\coloneq \{x \in \sH: M(x) = 0 \}$ is convex and closed.

By assuming throughout this work that $\zer M$ is nonempty, we are interested in the investigation of continuous time models that solve the following equation.
\begin{mdframed}
\begin{equation}
\label{intro:mono}
\textrm{ Find } x \in \sH \textrm{ such that } M \left( x \right) = 0.
\end{equation}
\end{mdframed}

A broad class of problems arising in applications can be reduced to the solving of a monotone equation in real Hilbert spaces such as \eqref{intro:mono}.  This ranges from classical frameworks in optimization,  mechanics or partial differential equations,  such as saddle-point problems, variational inequalities,  game theory and Nash equilibrium problems,  to modern applications such as signal processing and image reconstruction, machine learning and data science.  For $x \in \sH$, we denote by $\proj_{\zer M}(x)$ its projection onto $\zer M$ and by ${\dist}_{\zer M}(x) \coloneq \norm{x - \proj_{\zer M}(x)}$ its distance to the set $\zer M$.

If the operator $M \coloneq \nabla f$ is the (not necessarily Lipschitz continuous) gradient of a convex and continuously differentiable function $f \colon \sH \to \sR$,  then it is monotone and continuous and the solutions of \eqref{intro:mono} are exactly the optimal solutions of the optimization problem
\begin{equation}
\label{intro:opti}
\min_{x \in \sH} f \left( x \right).
\end{equation}
In this case, we attach to \eqref{intro:opti} the gradient flow
\begin{equation}
\label{ds:sd}
\dot{x} \left( t \right) + \nabla f \left( x \left( t \right) \right) = 0,
\end{equation}
with initial condition $x \left( t_{0} \right) \coloneq x_{0} \in \sH$. Whenever $\arg\min f \neq \emptyset$,  i.e. the set of minimizers of $f$ is not empty,  the generated trajectory converges weakly to an element of $\arg\min f$ and $f(x(t)) - \min f = o \left( \frac{1}{t} \right)$ as $t \rightarrow +\infty$, where $\min f$ denotes the minimal value of $f$.

Inspired by the gradient flow,  we attach to \eqref{intro:mono} the following dynamics
\begin{equation}
	\label{ds:simp}
\dot{x} \left( t \right) + M \left( x \left( t \right) \right) = 0,
\end{equation}
with starting time $t_{0} \geq 0$ and initial condition $x \left( t_{0} \right) \coloneq x_{0} \in \sH$. We assume that it admits a unique strong global solution $x : [t_{0}, +\infty) \rightarrow \sH$ (see \cite[Appendix]{Brezis}) with the property that $t \mapsto M(x(t))$ is absolutely continuous on every compact interval $[t_{0},T]$.  If $M \colon \sH \to \sH$ is Lipschitz continous, then this holds for every $x_0 \in \sH$.  It has been long recognized (see, for instance,  \cite{Baillon-Brezis}) that only the ergodic trajectory $t  \mapsto  \frac{1}{t-t_{0}} \int_{t_{0}}^t x(s) ds$ converges weakly to a zero of $M$, while the trajectory $t \mapsto x(t)$ may not converge in general. To see this,  one can consider the example (\cite{Cominetti-Peypouquet-Sorin}) of the counterclockwise \(\pi/2\)-rotation operator on \(\mathbb{R}^2\), $M \colon \sR^{2} \to \sR^{2} ,  M(x_1,x_2) = \left( - x_2 , x_1 \right)$,  which has $\left( 0 , 0 \right)$ as its unique zero.  For $t_{0}=0$ and $(x_1(0), x_2(0)) \coloneq (r_0 \cos (\theta_0),  r_0 \sin (\theta_0))$,  for $r_0 >0$ and $\theta_0 \in \sR$,  the solution trajectory of  \eqref{ds:simp},  that is $\left( x_1(t), x_2(t) \right) = \left( r_0 \cos (\theta_0+t), r_0 \sin (\theta_0+t) \right)$ travels with a counterclockwise orbit on the circle with radius $r0$ centred on the origin,  so it is bounded but has no limit point  as $t \rightarrow +\infty$.  On the other hand,  the ergodic trajectory $t \mapsto \frac{1}{t} \int_{0}^t (x_1(s), x_2(s)) ds$ converges to $(0,0)$ as $t \rightarrow +\infty$.

\subsection{The cocoercive case}

The situation changes drastically when $M \colon \sH \to \sH$ is a $\rho$-cocoercive operator for some $\rho > 0$, that is
\begin{equation*}
	\scal{M \left( x \right) - M \left( y \right) , x - y} \geq \rho \norm{M \left( x \right) - M \left( y \right)}^{2} \quad \forall x, y \in \sH .
\end{equation*}
A $\rho$-cocoercive operator is obviously monotone and $\frac{1}{\rho}$-Lipschitz continuous.  In this case,  the solution trajectory $t \mapsto x(t)$ of \eqref{ds:simp} converges weakly to a zero of $M$, and it holds that $\norm{\dot x(t)} = \norm{M(x(t))} = o \left( \frac{1}{\sqrt{t}}  \right)$ as $t \rightarrow +\infty$. According to \cite[Theorem 11]{Attouch-Bot-Nguyen},  this results holds true even for the trajectory of a cocoercive operator flow with small perturbations.  For completeness we provide the proof of the statement for the system without perturbations in Theorem \ref{thm:cocoercive} in the Appendix \tcr{(}see, also,  \cite{Bot-Csetnek}).

In the following, we will mention some typical examples of cocoercive operators, which obviously also serve as examples of monotone operators.

\paragraph{Minimizing a convex and differentiable function with a Lipschitz continuous gradient.}

For every convex and continuously differentiable function $f \colon \sH \to \sR$ it follows from the Baillon-Haddad Theorem (see \cite[Corollary 18.17]{Bauschke-Combettes:book}) that its gradient $\nabla f \colon \sH \to \sH$ is $\frac{1}{L}$-cocoercive if and only if it is $L$-Lipschitz continuous.  

\paragraph{The \emph{Yosida approximation} of a maximally monotone operator.}

The \emph{Yosida approximation} ${}^{\rho} \! A \colon \sH \to \sH$ with parameter $\rho > 0$ of a set-valued maximally monotone operator $A \colon \sH \rightrightarrows \sH$ is the single-valued operator defined as
\begin{equation*}
{}^{\rho} \! A \colon \sH \rightarrow \sH,   \quad {}^{\rho} \! A \coloneq \frac{1}{\rho} \left( \Id - J_{\rho A} \right) ,
\end{equation*}
where  $J_{\rho A} \coloneq \left( \Id + \rho A \right) ^{-1} \colon \sH \to \sH$ denotes the \emph{resolvent} of $A$ with parameter $\rho > 0$.  The operator ${}^{\rho} \! A$ is $\rho$-cocoercive (\cite[Corollary 23.11]{Bauschke-Combettes:book}) and it holds  $\zer {}^{\rho} \! A = \zer A \coloneq \{x \in \sH : 0 \in A(x)\}$.  This emphasizes the strong connection between monotone  inclusions and equations governed by cocoercive operators.  Therefore, we can use \eqref{intro:mono} to solve a more general problem, namely to find the zero of a maximally monotone operator.

\paragraph{Fixed point problem.}

An operator $T \colon \sH \rightarrow \sH$ is nonexpansive,  meaning it is $1$-Lipschitz continuous,  if and only if $\Id -T$ is $\frac{1}{2}$-cocoercive,  where $\Id \colon \sH \to \sH$ denotes the identity mapping on $\sH$.  This emphasizes the strong connection between fixed point problems and equations governed by cocoercive operators.  

We want to note that even in the case of a cocoercive operator, one cannot generally expect the solution trajectory of \eqref{ds:simp} to strongly converge to a zero of this operator.  A counterexample in this sense was provided in \cite{Attouch-Baillon} for a dynamical system governed by the gradient of a convex and continuously differentiable function with a Lipschitz continuous gradient.

\subsection{The general monotone case}

In spite of being ubiquitous in applications, we will not focus in this paper on continuous time models for \eqref{intro:mono} driven by cocoercive operators, but rather by monotone and continuous ones, which constitute a larger class of operators.  Operators such as
\begin{equation*}
M \colon \sH \times \sG \rightarrow \sH \times \sG,  \quad M(x,y) \coloneq (\nabla_x \Phi(x,y), -\nabla_y \Phi(x,y)),
\end{equation*}
where $\sG$ is another real Hilbert space and $\Phi : \sH \times \sG \rightarrow \sR$ is a convex-concave function with a continuous gradient, are monotone and continuous but generally not cocoercive,  as illustrated by the example of a nontrivial bilinear operator.  For such operators, \eqref{intro:mono} is equivalent to the problem of finding the saddle points of the minimax problem
\begin{align*}
    \min_{x\in\sH}\max_{y\in\sG} \Phi(x,y),
\end{align*}
which is of great relevance in game theory and in convex optimization in the context of linear equality constraints.

To approach the solution of \eqref{intro:mono} from a continuous time perspective,  we start again from its special case related to optimization problems.  Relying  on the previous works \cite{Su-Boyd-Candes, Attouch-Chbani-Peypouquet-Redont},  the second-order continuous time model 
\begin{equation}
	\label{ds:sd:inertial}
\ddot x (t) + \frac{\alpha}{t}\dot{x} \left( t \right) + \beta \nabla^2 f(x(t)) \dot x(t) + \nabla f \left( x \left( t \right) \right) = 0,
\end{equation}
with $\alpha \geq 3$ and $\beta \geq 0$, initial time $t_{0} >0$ and initial conditions $x \left( t_{0} \right) \coloneq x_{0} \in \sH$ and $\dot x \left( t_{0} \right) \coloneq \dot x_{0} \in \sH$,  was introduced in \cite{Attouch-Peypouquet-Redont} in the context of minimizing a convex and twice differentiable function $f : \sH \rightarrow \sR$ with $\arg\min f \neq \emptyset$. The primary aim was to accelerate the convergence of the function values along the generated solution trajectory.  Indeed, if $\alpha \geq 3$ and $\beta \geq 0$, then $f(x(t)) - \min f = \bO(\frac{1}{t^2})$ as $t \rightarrow +\infty$, while if $\beta >0$,  then $\int_{t_{0}}^{+\infty} t^2 \|\nabla f(x(t))\|^2dt < +\infty$. If $\alpha >3$, then the trajectory converges weakly to an element of $\arg\min f$ and $f(x(t)) - \min f = o(\frac{1}{t^2})$ as $t \rightarrow +\infty$.

Recently, in \cite{Bot-Csetnek-Nguyen},  in the context of a  monotone equation such as \eqref{intro:mono}, a second-order dynamical system has been proposed that combines a vanishing damping term with the time derivative of the operator along the trajectory, which can be seen as an analogous of the Hessian-driven damping term in \eqref{ds:sd:inertial}.  This model exhibits fast convergence rates rates of convergence in norm for the trajectory's velocity and the operator along the trajectory,  outperforming those of the cocoercive operator flow.  This phenomenon is analogous to the one described above for optimization, despite the fact that the operator does not have to originate from a potential. The precise formulation of this so-called Fast Optimistic Gradient Descent Ascent (OGDA) dynamics is
\begin{equation}	
\label{ds:fast-OGDA:intro}
\ddot{x} \left( t \right) + \dfrac{\alpha}{t} \dot{x} \left( t \right) + \beta \left( t \right) \dfrac{d}{dt} M \left( x \left( t \right) \right) + \dfrac{1}{2} \left( \dot{\beta} \left( t \right) + \dfrac{\alpha \beta \left( t \right)}{t} \right) M \left( x \left(t \right) \right) = 0,
\end{equation}
with initial time $t_{0} >0$,  $\alpha \geq 2$,  $\beta:[t_{0}, +\infty) \rightarrow \sR_{++}$ a continuous differentiable and nondecreasing function, and initial conditions $x ( t_{0}) = x_{0} \in \sH$ and $\dot{x} \left( t_{0} \right) = \dot{x}_{0} \in \sH$.

It has been shown that if
\begin{equation*}
\sup_{t \geq t_{0}} \dfrac{t \dot{\beta} \left( t \right)}{\beta \left( t \right)} \leq \alpha - 2,
\end{equation*}
then 
$$\norm{\dot{x} \left( t \right)} = \bO \left(\dfrac{1}{t} \right) \quad \mbox{and} \quad \norm{M \left( x \left( t \right) \right)} = \bO \left(\dfrac{1}{t \beta \left( t \right)}\right) \quad \mbox{as} \ t \rightarrow +\infty,$$
and if
\begin{equation*}
\sup_{t \geq t_{0}} \dfrac{t \dot{\beta} \left( t \right)}{\beta \left( t \right)} < \alpha - 2,
\end{equation*}
then
$$\norm{\dot{x} \left( t \right)} = o \left(\dfrac{1}{t} \right) \quad \mbox{and} \quad \norm{M \left( x \left( t \right) \right)} = o \left(\dfrac{1}{t \beta \left( t \right)}\right) \quad \mbox{as} \ t \rightarrow +\infty,$$
and $x(t)$ converges weakly to a zero of $M$ as $t \rightarrow +\infty$. Explicit and implicit numerical algorithms,  introduced as discrete time counterparts of \eqref{ds:fast-OGDA:intro} and sharing its convergence properties have also been provided in \cite{Bot-Csetnek-Nguyen, Bot-Nguyen}.  An extension with a more general damping term has been studied in \cite{Bot-Hulett-Nguyen}.

\subsection{Continuous models}

In this paper, we consider the Tikhonov regularization of the dynamical system \eqref{ds:simp}
\begin{equation}
\label{ds:Tikhonov:intro}
	\dot{x} \left( t \right) + M \left( x \left( t \right) \right) + \varepsilon \left( t \right) x \left( t \right) = 0,
\end{equation}
with initial condition $x \left( t_{0} \right) \coloneq x_{0} \in \sH$ and continuously differentiable regularization function $\varepsilon \colon \left[ t_{0} , + \infty \right) \to \sR_{++}$ satisfying
\begin{equation*}
\lim_{t \to + \infty} \varepsilon \left( t \right) = 0
\quad \textrm{ and } \quad
\int_{t_{0}}^{+ \infty} \varepsilon \left( t \right) dt = + \infty.
\end{equation*}

It is well-known (\cite{Attouch-Cominetti,Cominetti-Peypouquet-Sorin}) that the Tikhonov regularization ensures that the trajectory generated by the system converges strongly to the element of minimum norm in $\zer M$.  We will demonstrate that, in addition,  rates of convergence in norm for the trajectory's velocity and the operator along the trajectory can be derived in terms of $\tcr{\varepsilon(\cdot)}$.  In some particular cases, these rates of convergence can be can be as fast as $\bO(\frac{1}{t})$ as $t \rightarrow +\infty$.  In this way, we emphasize a surprising acceleration feature of the Tikhonov regularization.

Further,  by using time rescaling techniques,  we translate the strong convergence and the convergence rate statements to the more general dynamic
\begin{equation}
	\label{ds:anchor:intro}
\dot{x} \left( t \right) + \beta \left( t\right) M \left( x \left( t \right) \right) + \delta \left( t \right) (x \left( t \right) -v) = 0 ,
\end{equation}
with initial time $t_{0} \geq 0$,  $\beta, \delta \colon \left[ t_{0} , + \infty \right) \to \sR_{++}$ continuously differentiable functions satifying
\begin{equation*}
\lim_{t \to + \infty} \dfrac{\delta \left( t \right)}{\beta \left( t \right)} = 0 , \quad
\int_{t_{0}}^{+ \infty} \delta \left( t \right) dt = + \infty ,
\quad \textrm{ and } \quad
\int_{t_{0}}^{+ \infty} \beta \left( t \right) dt = + \infty .
\end{equation*}
initial conditions $x \left( t_{0} \right) \coloneq x_{0} \in \sH$ and anchor point $v \in \sH$.  This dynamics,  called the continuous model of the Halpern method \cite{Halpern},  was  studied by Suh, Park, and Ryu in \cite{Suh-Park-Ryu} in finite-dimensional spaces, in the case $\beta(\cdot) \equiv 1$ and $\varepsilon \left( t \right) \coloneq \frac{\alpha}{t^{q}}$ for every $t \geq t_{0} >0$, where $\alpha > 0$ and $0 < q \leq 1$.  By relying on a very intricate Lyapunov analysis, the convergence of the trajectory and convergence rate for the norm of the operator along the trajectory has been provided. These results turn out to be particular cases of our more general approach,  which is fundamentally different.  In addition, our approach provides a rate of convergence for the velocity.  In light of the above considerations, it is evident that the Halpern method is nothing else than a particular instance of the classical Tikhonov regularization approach. The benefit of the time rescaling technique (\cite{Attouch-Bot-Nguyen,  Attouch-Bot-Nguyen:closed-loop}) is that there is no need to develop a Lyapunov analysis for every single system,  as it allows retrieving the convergence statements from the previously known results of other related dynamics.

Finally,  we show the connection between a particular first-order dynamical system with Tikhonov regularization and anchor point of the type \eqref{ds:anchor:intro} and the second order dynamical system
\begin{equation}	
\label{ds:second-order:intro}
\ddot{x} \left( t \right) + \dfrac{\alpha}{t} \dot{x} \left( t \right) + \beta \left( t \right) \dfrac{d}{dt} M \left( x \left( t \right) \right) + \left( \dot{\beta} \left( t \right) + \dfrac{\beta \left( t \right)}{t} \right) M \left( x \left(t \right) \right) = 0,
\end{equation}
with initial time $t_{0} >0$,  $\alpha \geq 2$,  $\beta:[t_{0}, +\infty) \rightarrow \sR_{++}$ a continuously differentiable function,  and initial conditions $x ( t_{0}) = x_{0} \in \sH$ and $\dot{x} \left( t_{0} \right) = \dot{x}_{0} \in \sH$. The dynamics \eqref{ds:second-order:intro} inherits from the first order dynamical system with Tikhonov regularization the strong convergence property of the trajectory and the fast rates of convergence in norm for the trajectory's velocity and the operator along the trajectory.  In addition,  it coincides with \eqref{ds:fast-OGDA:intro} for $\beta(t) = \beta_0 t^{\alpha-2}$ with $\alpha \geq 2$ and $\beta_0 >0$, and thus reveals the strong convergence of the trajectory for Fast OGDA in a setting where, according to \cite{Bot-Csetnek-Nguyen},  nothing could be said about the behaviour of the trajectory.

\subsection{Fixed point problems and discretization}

In the case when $M = \Id-T$,  for $T : \sH \rightarrow \sH$ a nonexpansve ($1$-Lipschitz continuous) operator, the explicit discretization of \eqref{ds:Tikhonov:intro} with anchor point $v \in \sH$ (or of the dynamic \eqref{ds:anchor:intro} in case $\beta \equiv 1$) leads to the Halpern fixed point iteration \cite{Halpern}.  By performing a convergence analysis which can be seen as a the discrete counterpart of the one provided for the continuous time system, we derive two different regimes for the regularization sequence,  both of which guarantee that the generated sequence of iterates strongly converges to the fixed point of $T$ (or the zero of $M$) that is closest to $v$. In addition, we provide a general theortical framework that allows to derive convergence rates for the vanishing of the discrete velocity and of the fixed point residual. Again, for some instances of the regularization sequence, we are able to derive concrete convergence rates that actually match those in continuous time.  Extensions to fixed point problems governed by averaged operators are given, which allow us to improve some statements concerning the asymptotic properties of some algorithms from the literature (\cite{Bot-Csetnek-Meier}, \cite{Kim}).

\section{First-order dynamical system with Tikhonov regularization}\label{sec2}

The starting point of our study is a first-order monotone operator flow with Tikhonov regularization.

\begin{mdframed}
Let $t_{0} \geq 0$ and $\varepsilon \colon \left[ t_{0} , + \infty \right) \to \sR_{++}$ be given.
We consider on $\left[ t_{0} , + \infty \right)$ the following dynamical system
\begin{equation}
\label{ds:Tikhonov}
\dot{x} \left( t \right) + M \left( x \left( t \right) \right) + \varepsilon \left( t \right) x \left( t \right) = 0 ,
\end{equation}
with initial condition $x \left( t_{0} \right) \coloneq x_{0} \in \sH$.
\end{mdframed}

For the \emph{regularization function} $\varepsilon(\cdot)$, the following conditions are imposed. These conditions are known to be necessary for the strong convergence of the trajectory to the minimum norm solution, as discussed in \cite{Attouch-Cominetti,Cominetti-Peypouquet-Sorin}.

\begin{mdframed}
The regularization function $\varepsilon \colon \left[ t_{0} , + \infty \right) \to \sR_{++}$ is required to be continuously differentiable and to satisfy
\begin{equation}
\label{cond:eps}
\lim_{t \to + \infty} \varepsilon \left( t \right) = 0
\quad \textrm{ and } \quad
\int_{t_{0}}^{+ \infty} \varepsilon \left( t \right) dt = + \infty.
\end{equation}
The first-order dynamical system is assumed to admit a unique strong global solution $x : [t_{0}, +\infty) \rightarrow \sH$, meaning that it is absolutely continuous on every compact interval $[t_{0},T]$ (see \cite[Appendix]{Brezis}) and it satisfies \eqref{ds:Tikhonov} almost everywhere, with the property that $t \mapsto M(x(t))$ is absolutely continuous on every compact interval $[t_{0},T]$.
\end{mdframed}

\begin{remark}\label{remark21}
\begin{enumerate}
\item 
The first condition in \eqref{cond:eps} is essential and inherently expected, as we anticipate the trajectory to converge towards a zero of the operator $M$. If we were to assume that $\varepsilon \left( t \right) \equiv \varepsilon_{0} > 0$, then the trajectory $t \mapsto x(t)$ would indeed exhibit strong convergence, albeit towards the unique zero of the strongly monotone operator $M + \varepsilon_{0} \Id$.
\item 
\label{remark21:M0}
In case $M \equiv 0$, the equation
\begin{equation*}
\dot{x} \left( t \right) + \varepsilon \left( t \right) x \left( t \right) = 0
\end{equation*}
with initial condition $x \left( t_{0} \right) \coloneq x_{0} \in \sH$, has as its unique solution
\begin{equation*}
x \left( t \right) = x_{0} \exp \left( - \int_{t_{0}}^{t} \varepsilon \left( u \right) du \right) .
\end{equation*}
We observe that $x(t)$ converges to $0$ as $t \rightarrow +\infty$, representing the element of $\zer M$ with the minimum norm, if and only if $\int_{t_{0}}^{+\infty} \varepsilon(t) dt = +\infty$. This property holds irrespective of the initial data $x_{0}$ and underscores the indispensability of the second condition in \eqref{cond:eps}.
\item
If $M \colon \sH \rightarrow \sH$ is Lipschitz continuous, then  (see, for instance, \cite[Proposition 6.2.1]{Haraux}
) \eqref{ds:Tikhonov} has for every $x_0 \in \sH$ a unique strong global solution $x  \colon \left[ t_{0} , + \infty \right) \rightarrow \sH$. In this case, $t \mapsto M(x(t))$ is absolutely continuous on every compact interval $[t_{0},T]$ and therefore almost everywhere differentiable on $[t_{0},+\infty)$.
\end{enumerate}
\end{remark}

\subsection{Convergence analysis and rates of convergence}\label{subsec21}

Following Remark \ref{remark21} \ref{remark21:M0}, for $\varepsilon \colon \left[ t_{0} , + \infty \right) \to \sR_{++}$, we define 
\begin{equation}
\label{defi:gamma}
\gamma_{\varepsilon} \colon \left[ t_{0} , + \infty \right) \to \sR_{++}, \quad \gamma_{\varepsilon} \left( t \right) \coloneq \exp \left( \int_{t_{0}}^{t} \varepsilon \left( u \right) du \right),
\end{equation}
so that for every $t \geq t_{0}$ it holds
\begin{align}
\dot{\gamma}_{\varepsilon} \left( t \right) 	& = \varepsilon \left( t \right) \gamma_{\varepsilon} \left( t \right) , \label{gamma:der} \\
\ddot{\gamma}_{\varepsilon} \left( t \right) 	& = \left[ \dot{\varepsilon} \left( t \right) + \varepsilon^{2} \left( t \right) \right] \gamma_{\varepsilon} \left( t \right) . \label{gamma:second}
\end{align}
This function will naturally appear several times in the analysis. 
The conditions \eqref{cond:eps} imposed on $\varepsilon$ ensure that $\gamma_{\varepsilon}$ is strictly increasing, thus $\gamma_{\varepsilon} \left( t \right) \geq \gamma_{\varepsilon} \left( t_{0} \right) = 1$ for every $t \geq t_{0}$, and that $\lim_{t \to + \infty} \gamma_{\varepsilon} \left( t \right) = + \infty$.

Given a trajectory solution $x \colon \left[ t_{0} , + \infty \right) \to \sH$ of \eqref{ds:Tikhonov} and $\xi \in \zer M$, we define the functions
\begin{align}\label{defi:phi}
\varphi_{\xi} \colon \left[ t_{0} , + \infty \right) \to \sR_{+}, \quad \varphi_{\xi} \left( t \right) \coloneq \dfrac{1}{2} \norm{x \left( t \right) - \xi}^{2}, 
\end{align}
and 
\begin{align}\label{defi:g}
\psi \colon \left[ t_{0} , + \infty \right) \to \sR_{+}, \quad \psi \left( t \right)  \coloneq  \dfrac{1}{2} \norm{M \left( x \left( t \right) \right) + \varepsilon \left( t \right) x \left( t \right)}^{2}.
\end{align}
These two objects are pivotal in our convergence analysis. Let us begin by establishing some initial estimates for them. In some of these estimates, we will utilize the quantity
\begin{equation}
\label{defi:D0}
D_{0} \left( x_{0},  v,  \xi \right) \coloneq \max \seq{\norm{x_{0} - \xi} , \norm{v - \xi}},
\end{equation}
which is defined for a triple $\left( x_{0} , v , \xi \right) \in \sH \times \sH \times \sH$.

\begin{lemma}
\label{lem:bnd}
Let $x \colon \left[ t_{0} , + \infty \right) \to \sH$ be a trajectory solution of the first-order dynamical system with Tikhonov regularization \eqref{ds:Tikhonov} and $\xi \in \zer M$. Then, the following statements are true:
\begin{enumerate}
\item 	
\label{lem:bnd:dphi}
for almost every $t \geq t_{0}$, it holds
\begin{equation}
\label{d:dphi}
\dot{\varphi_{\xi}} \left( t \right) + 2\varepsilon \left( t \right) \varphi_{\xi} \left( t \right) \leq - \varepsilon \left( t \right) \scal{x \left( t \right) - \xi , \xi} \leq \varepsilon \left( t \right) \varphi_{\xi} \left( t \right) +  \dfrac{\varepsilon \left( t \right)}{2} \norm{\xi}^{2};
\end{equation}

\item 
\label{lem:bnd:traj-bnd}
the trajectory is bounded, in particular, for every $t \geq t_{0}$, it holds
\begin{equation}
\label{Tikh:traj-bnd}
\norm{x \left( t \right) - \xi} \leq D_{0} \left( x_{0} , 0 , \xi \right) 
\quad \textrm{ and } \quad 
\norm{x \left( t \right)} \leq 2D_{0} \left( x_{0} , 0 , \xi \right) ;
\end{equation}

\item 
for almost every $t \geq t_{0}$, it holds
\begin{equation}
\label{Tikh:est}
\dot{\psi} \left( t \right) + 2 \varepsilon \left( t \right) \psi \left( t \right) \leq  - \dfrac{1}{2} \dot{\varepsilon} \left( t \right) \left( \dfrac{d}{dt} \norm{x \left( t \right)}^{2} \right) = - \dot{\varepsilon}(t) \langle \dot x(t), x(t) \rangle;
\end{equation}

\item 
for almost every $t \geq t_{0}$, it holds
\begin{equation}
\label{Tikh:crit}
\dfrac{d}{dt} \big( \gamma_{\varepsilon}^{2} \left( t \right) \psi \left( t \right) \big) 
= \gamma_{\varepsilon}^{2} \left( t \right) \dot{\psi} \left( t \right) + 2 \varepsilon \left( t \right) \gamma_{\varepsilon}^{2} \left( t \right) \psi \left( t \right) 
\leq - \dfrac{1}{2} \dot{\varepsilon} \left( t \right) \gamma_{\varepsilon}^{2} \left( t \right) \left( \dfrac{d}{dt} \norm{x \left( t \right)}^{2} \right) = - \dot{\varepsilon}(t) \gamma_{\varepsilon}^{2} \left( t \right) \langle \dot x(t), x(t) \rangle.
\end{equation}
\end{enumerate}
\end{lemma}

\begin{proof}
\begin{enumerate}
\item 
For almost every $t \geq t_{0}$, when we take the time derivative of $\varphi_{\xi}$, we obtain
\begin{align}	
\dot{\varphi_{\xi}} \left( t \right) = \scal{x \left( t \right) - \xi , \dot{x} \left( t \right)} 
& = - \scal{x \left( t \right) - \xi , M \left( x \left( t \right) \right)} - \varepsilon \left( t \right) \scal{x \left( t \right) - \xi , x \left( t \right)} . \nonumber \\
& = - \scal{x \left( t \right) - \xi , M \left( x \left( t \right) \right)} - \varepsilon \left( t \right) \scal{x \left( t \right) - \xi , \xi} - 2 \varepsilon \left( t \right) \varphi_{\xi} \left( t \right) \nonumber \\
& \leq - \varepsilon \left( t \right) \scal{x \left( t \right) - \xi , \xi} - 2 \varepsilon \left( t \right) \varphi_{\xi} \left( t \right) , \label{d:phi}
\end{align}
where in the last inequality we used that $\scal{x \left( t \right) - \xi , M \left( x \left( t \right) \right)} \geq 0$, which holds due to the monotonicity of $M$. The second inequality in \eqref{d:dphi} follows from
\begin{align*}
- \varepsilon \left( t \right) \scal{x \left( t \right) - \xi , \xi} = \dfrac{\varepsilon \left( t \right)}{2} \left( \norm{x \left( t \right) - \xi}^{2} + \norm{\xi}^{2} - \norm{x \left( t \right)}^{2} \right) \leq \varepsilon \left( t \right) \varphi_{\xi} \left( t \right) + \dfrac{\varepsilon \left( t \right)}{2} \norm{\xi}^{2}.
\end{align*}

\item 
By invoking \eqref{d:dphi}, it yields for almost every $t \geq t_{0}$
\begin{equation*}
\dfrac{d}{dt} \big( \gamma_{\varepsilon} \left( t \right) \varphi_{\xi} \left( t \right) \big) = \gamma_{\varepsilon} \left( t \right) \dot{\varphi_{\xi}} \left( t \right) + \varepsilon \left( t \right) \gamma_{\varepsilon} \left( t \right) \varphi_{\xi} \left( t \right) \leq \dfrac{\varepsilon \left( t \right) \gamma_{\varepsilon} \left( t \right)}{2} \norm{\xi}^{2}  = \dot{\gamma_{\varepsilon}} \left( t \right) \dfrac{\norm{\xi}^{2}}{2} .
\end{equation*}
After integration from $t_{0}$ to $t$, we obtain for every $t \geq t_{0}$
\begin{equation*}
\gamma_{\varepsilon} \left( t \right) \varphi_{\xi} \left( t \right) - \gamma_{\varepsilon} \left( t_{0} \right) \varphi_{\xi} \left( t_{0} \right) \leq \left( \gamma_{\varepsilon} \left( t \right) - \gamma_{\varepsilon} \left( t_{0} \right) \right) \dfrac{\norm{\xi}^{2}}{2},
\end{equation*}
which is equivalent to
\begin{equation*}
\norm{x \left( t \right) - \xi}^{2} \leq \dfrac{1}{\gamma_{\varepsilon} \left( t \right)} \norm{x_{0} - \xi}^{2} + \left( 1 - \dfrac{1}{\gamma_{\varepsilon} \left( t \right)} \right) \norm{\xi}^{2}.
\end{equation*}
The first statement in \eqref{Tikh:traj-bnd} is a direct consequence, as $\gamma_{\varepsilon} \left( t \right) \geq \gamma_{\varepsilon} \left( t_{0} \right) = 1$ for every $t \geq t_{0}$.
The second statement in \eqref{Tikh:traj-bnd} follows by the triangle inequality.

\item 
Differentiating the function $\psi$ with respect to $t$ gives for almost every $t \geq t_{0}$
\begin{align}
\dot{\psi} \left( t \right)
& = \scal{M \left( x \left( t \right) \right) + \varepsilon \left( t \right) x \left( t \right) , \dfrac{d}{dt} \big( M \left( x \left( t \right) \right) + \varepsilon \left( t \right) x \left( t \right) \big)} \nonumber \\
& = - \scal{\dot{x} \left( t \right) , \dfrac{d}{dt} M \left( x \left( t \right) \right) + \dot{\varepsilon} \left( t \right) x \left( t \right) + \varepsilon \left( t \right) \dot{x} \left( t \right)} \nonumber \\
& = - \scal{\dot{x} \left( t \right) , \dfrac{d}{dt} M \left( x \left( t \right) \right)} - \dot{\varepsilon} \left( t \right) \scal{\dot{x} \left( t \right) , x \left( t \right)} - \varepsilon \left( t \right) \norm{\dot{x} \left( t \right)}^{2} \nonumber \\
& \leq - \dot{\varepsilon} \left( t \right) \scal{\dot{x} \left( t \right) , x \left( t \right)} - \varepsilon \left( t \right) \norm{\dot{x} \left( t \right)}^{2} \label{Tikh:pre-est} \\
& = - \dfrac{1}{2} \dot{\varepsilon} \left( t \right) \left( \dfrac{d}{dt} \norm{x \left( t \right)}^{2} \right) - 2 \varepsilon \left( t \right) \psi \left( t \right), \nonumber
\end{align}
where inequality \eqref{Tikh:pre-est} is a consequence of the monotonicity of $M$.

\item The statement follows from \eqref{Tikh:est}, by using that  $\dot \gamma_{\varepsilon}(t) = \gamma_{\varepsilon}(t) \varepsilon (t)$ for every $t \geq t_{0}$.
\qedhere
\end{enumerate}
\end{proof}

Studying the function $\varphi$ not only reveals the boundedness of the trajectory,  but also allows us to deduce its strong convergence, provided that $\norm{M \left( x (t) \right)} \to 0$ as $t \to + \infty$.

\begin{proposition}\label{prop:Tikh:converge}
Let $x \colon \left[ t_{0} , + \infty \right) \to \sH$ be the trajectory solution of the first-order dynamical system with Tikhonov regularization \eqref{ds:Tikhonov}. If $\|M( x ( t))\| \to 0$ as $t \to + \infty$, then $x(t)$ converges strongly to the minimum norm solution $\xi_{*} \coloneq \proj_{\zer M} \left( 0 \right)$ as $t \to + \infty$.
\end{proposition}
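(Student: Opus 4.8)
The plan is to combine the boundedness of the trajectory from \cref{lem:bnd}(ii), a weak-cluster-point argument exploiting the maximal monotonicity of $M$, and the differential inequality of \cref{lem:bnd}(i) integrated against the weight $\gamma^{2}$.

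Since $\zer M$ is nonempty, closed and convex, the minimum norm solution $\xi_{*} := \proj_{\zer M}(0)$ is well defined and belongs to $\zer M$, so \cref{lem:bnd} applies with $\xi := \xi_{*}$ and, in particular, the trajectory is bounded. First I would show that every weak sequential cluster point $\bar x$ of $x(t)$ as $t \to +\infty$ lies in $\zer M$: if $x(t_{n}) \rightharpoonup \bar x$ along some sequence $t_{n} \to +\infty$, then $M(x(t_{n})) \to 0$ by hypothesis, and passing to the limit in the monotonicity inequality $\scal{x(t_{n}) - y, M(x(t_{n})) - v} \geq 0$, valid for every $(y,v)$ in the graph of $M$ and using that the pairing of a weakly convergent sequence with a strongly convergent one passes to the limit, one gets $\scal{\bar x - y, 0 - v} \geq 0$ for all such $(y,v)$, whence $M(\bar x) = 0$ by the maximal monotonicity of $M$. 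Combining this with the projection inequality $\scal{\zeta - \xi_{*}, \xi_{*}} \geq 0$, which holds for every $\zeta \in \zer M$, evaluated along a sequence realizing $\liminf_{t \to +\infty} \scal{x(t) - \xi_{*}, \xi_{*}}$ (from which a weakly convergent subsequence can be extracted thanks to boundedness), I obtain
\begin{equation*}
\liminf_{t \to +\infty} \scal{x(t) - \xi_{*}, \xi_{*}} \geq 0 .
\end{equation*}

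To conclude, I would run a Tikhonov-type averaging argument. Multiplying the first inequality in \eqref{d:dphi} (with $\xi := \xi_{*}$) by $\gamma^{2}(t)$ and using $\frac{d}{dt} \gamma^{2}(t) = 2 \varepsilon(t) \gamma^{2}(t)$, one gets $\frac{d}{dt} \big( \gamma^{2}(t) \varphi_{\xi_{*}}(t) \big) \leq - \varepsilon(t) \gamma^{2}(t) \scal{x(t) - \xi_{*}, \xi_{*}}$; integrating from $t_{0}$ to $t$, dividing by $\gamma^{2}(t)$, and using $\gamma(t_{0}) = 1$ together with $\int_{t_{0}}^{t} \varepsilon(s) \gamma^{2}(s) \, ds = \frac{1}{2}\big( \gamma^{2}(t) - 1 \big)$, this yields
\begin{equation*}
\varphi_{\xi_{*}}(t) \leq \frac{\varphi_{\xi_{*}}(t_{0})}{\gamma^{2}(t)} + \frac{1}{\gamma^{2}(t)} \int_{t_{0}}^{t} \varepsilon(s) \gamma^{2}(s) \big( - \scal{x(s) - \xi_{*}, \xi_{*}} \big) \, ds .
\end{equation*}
The first term tends to $0$ because $\gamma(t) \to +\infty$. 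For the second term, the normalized weights $\varepsilon(s) \gamma^{2}(s) \big/ \big( \frac{1}{2}(\gamma^{2}(t)-1) \big)$ concentrate near $s = t$ as $t \to +\infty$; splitting the integral at an arbitrary fixed $T \geq t_{0}$, using boundedness of the trajectory to control the contribution over $[t_{0},T]$ (which is divided by $\gamma^{2}(t) \to +\infty$) and the bound $- \scal{x(s) - \xi_{*}, \xi_{*}} \leq \epsilon$ over $[T,t]$ (available for any $\epsilon > 0$, for $T$ large enough, by the liminf estimate above), I obtain that the $\limsup_{t \to +\infty}$ of the second term is $\leq \epsilon$, hence $\limsup_{t \to +\infty} \varphi_{\xi_{*}}(t) \leq \epsilon$. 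Letting $\epsilon \downarrow 0$ and using $\varphi_{\xi_{*}} \geq 0$ gives $\varphi_{\xi_{*}}(t) \to 0$, i.e. $x(t) \to \xi_{*}$ strongly.

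The main obstacle is this final averaging step: converting the asymptotic sign information on $\scal{x(t) - \xi_{*}, \xi_{*}}$ into genuine decay of $\varphi_{\xi_{*}}$. This is precisely where the condition $\int_{t_{0}}^{+\infty} \varepsilon(t) \, dt = +\infty$ (equivalently, $\gamma(t) \to +\infty$) is indispensable, since it forces the concentration of the weights at the right endpoint. The weak-cluster-point step, while more routine, is the other place where the monotonicity of $M$ — now in its maximal form — enters.
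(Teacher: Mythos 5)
Your proposal is correct and follows essentially the same route as the paper: the differential inequality \eqref{d:dphi} for $\varphi_{\xi_*}$, the identification of weak sequential cluster points of the bounded trajectory with zeros of $M$ (your direct maximal-monotonicity limit argument is equivalent to the paper's appeal to the weak$\times$strong closedness of the graph), and the variational characterization of $\xi_{*} = \proj_{\zer M}(0)$ to convert the cluster-point information into $\limsup_{t\to+\infty}\varphi_{\xi_*}(t) \leq 0$. The only difference is that where the paper simply invokes \cref{lem:limsup}, you re-derive precisely that estimate by hand via the integrating factor $\gamma^{2}$ and the concentration of the weights $\varepsilon(s)\gamma^{2}(s)$ near the right endpoint, which is a valid (slightly longer) substitute and correctly isolates where $\int_{t_0}^{+\infty}\varepsilon(t)\,dt=+\infty$ is used.
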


\begin{proof}
Invoking inequality \eqref{d:dphi} for $\xi_{*} \coloneq \proj_{\zer M} \left( 0 \right)$, we have for almost every $t \geq t_{0}$
\begin{equation*}
\dot{\varphi}_{\xi_{*}} \left( t \right) + \varepsilon \left( t \right) \varphi_{\xi_{*}} \left( t \right) \leq \varepsilon \left( t \right) \scal{x \left( t \right) - \xi_{*} , 0 - \xi_{*}} .
\end{equation*}
By using Lemma \ref{lem:limsup} and the boundedness of $x(\cdot)$, from here we obtain that 
\begin{equation}\label{ineq:strongconv}
\limsup_{t \rightarrow +\infty} \varphi_{\xi_{*}} \left( t \right) \leq \limsup_{t \rightarrow +\infty} \scal{x \left( t \right) - \xi_{*} , 0 - \xi_{*}}.
\end{equation}

We are going to show that, under the additional assumption $\|M \left( x \left( t \right) \right)\| \to 0$ as $t \to + \infty$,  every weak cluster point of $x(\cdot)$ belongs to $\zer M$. Such cluster point always exists thanks to the boundedness of the trajectory.  If $\widehat{x}$ is arbitrary weak cluster point of $x(\cdot)$, then there exists a sequence $\seq{x \left( t_{k} \right)}_{k \geq 0}$ such that 
$x \left( t_{k} \right)$ converges weakly to $\widehat{x}$ as $k \to + \infty$. On the other hand, $M \left( x \left( t_{k} \right) \right)$ converges strongly to $0$ as $k \to + \infty$. Since $M$ is monotone and continuous, it is maximally monotone (see, for instance, \cite[Corollary 20.28]{Bauschke-Combettes:book}). Therefore, its graph is sequentially closed in the weak $\times$ strong topology of $\sH \times \sH$ (see \cite[Proposition 20.38]{Bauschke-Combettes:book}). This means that $M \left( \widehat{x} \right) = 0$, which enables us to conclude from \eqref{ineq:strongconv}, by using the variational characterization of the projection, that $\limsup_{t \rightarrow +\infty} \varphi_{\xi_{*}} \left( t \right) \leq 0$, which is nothing else than $x(t) \rightarrow \xi_{*}$ as $t \rightarrow +\infty$.
\end{proof}

In the following,  we will introduce general conditions concerning the regularization function that enable the derivation of convergence rates for the operator norm and the velocity.  This is where the second function $\psi$ comes into play.

Let us begin with an observation. For every $t \geq t_{0}$, if
\begin{equation}
\label{gamma:eq}
\dot{\varepsilon} \left( t \right) + \varepsilon^{2} \left( t \right) = 0,
\end{equation}
then, according to \eqref{gamma:second}, we must have
\begin{equation*}
\ddot{\gamma}_{\varepsilon} \left( t \right) = \left[ \dot{\varepsilon} \left( t \right) + \varepsilon^{2} \left( t \right) \right] \gamma_{\varepsilon} \left( t \right) = 0 .
\end{equation*}
This leads to the following equation
\begin{equation*}
\dot{\gamma}_{\varepsilon} \left( t \right) = \varepsilon \left( t \right) \gamma_{\varepsilon} \left( t \right) = \varepsilon \left( t_{0} \right) .
\end{equation*}
In this case, we can derive from \eqref{Tikh:crit} that for almost every $t \geq t_0$
\begin{equation}
\label{Tikh:crit:est}
\dfrac{d}{dt} \left( \gamma_{\varepsilon}^{2} \left( t \right) \psi \left( t \right) \right) 
\leq - \dfrac{1}{2} \dot{\varepsilon} \left( t \right) \gamma_{\varepsilon}^{2} \left( t \right) \left( \dfrac{d}{dt} \norm{x \left( t \right)}^{2} \right) 
= \dfrac{1}{2} \varepsilon^{2} \left( t \right) \gamma_{\varepsilon}^{2} \left( t \right) \left( \dfrac{d}{dt} \norm{x \left( t \right)}^{2} \right) 
= \dfrac{1}{2} \varepsilon^{2} \left( t_{0} \right) \left( \dfrac{d}{dt} \norm{x \left( t \right)}^{2} \right) .
\end{equation}

We record this special case here for $\varepsilon \left( t \right) \coloneq \frac{1}{t}$, which is obviously a solution of \eqref{gamma:eq}.  

\begin{theorem}[the case $\varepsilon \left( t \right) \coloneq \frac{1}{t}$]
\label{thm:Tikh:q=1:alpha=1}
Let $t_{0} > 0$, and $x \colon \left[ t_{0} , + \infty \right) \to \sH$ be the trajectory solution of the dynamical system
\begin{equation}
\label{ds:q=1:alpha=1}
\dot{x} \left( t \right) + M \left( x \left( t \right) \right) + \dfrac{1}{t} x \left( t \right) = 0,
\end{equation}
with initial condition $x \left( t_{0} \right) \coloneq x_{0} \in \sH$.  Then, the following statements are true:
\begin{enumerate}	
\item 
as $t \to + \infty$ it holds
\begin{align*}
\norm{\dot{x} \left( t \right)} = \bO \left( \dfrac{1}{t} \right) 
\quad \textrm{ and } \quad
\norm{M \left( x \left( t \right) \right)} = \bO \left( \dfrac{1}{t} \right);
\end{align*}
\item the trajectory $x(t)$ converges strongly to the minimum norm solution $\xi_{*} \coloneq \proj_{\zer M} \left( 0 \right)$ as $t \to + \infty$.
\end{enumerate}
\end{theorem}
\begin{proof}
Let $t \geq t_{0}$ and $\xi \in \zer M$. By integration from $t_{0}$ to $t$, we obtain
\begin{equation*}
\gamma_{\varepsilon}^{2} \left( t \right) \psi \left( t \right) - \psi \left( t_{0} \right) \leq \dfrac{1}{2} \varepsilon^{2} \left( t_{0} \right) \left( \norm{x \left( t \right)}^{2} - \norm{x_{0}}^{2} \right),
\end{equation*}
which implies
\begin{align*}
\gamma_{\varepsilon}^{2}(t) \norm{M \left( x \left( t \right) \right) + \varepsilon \left( t \right) x \left( t \right)}^{2} 
& \leq \norm{M \left( x_{0} \right) + \varepsilon \left( t_{0} \right) x_{0}}^{2} + \varepsilon^{2} \left( t_{0} \right) \left( \norm{x \left( t \right)}^{2} - \norm{x_{0}}^{2} \right) \nonumber \\
& = \norm{M \left( x_{0} \right)}^{2} + 2 \varepsilon \left( t_{0} \right) \scal{M \left( x_{0} \right),x_{0}} + \varepsilon^{2} \left( t_{0} \right) \norm{x \left( t \right)}^{2} \nonumber \\
& \leq \norm{M \left( x_{0} \right)}^{2} + 2 \varepsilon \left( t_{0} \right) \norm{M \left( x_{0} \right)} \norm{x_{0}} + \varepsilon^{2} \left( t_{0} \right) \norm{x \left( t \right)}^{2} \nonumber \\
& \leq \norm{M \left( x_{0} \right)}^{2} + 4 \varepsilon \left( t_{0} \right) \norm{M \left( x_{0} \right)} D_{0} \left( x_{0} , 0 , \xi \right) + 4 \varepsilon^{2} \left( t_{0} \right) D_{0}^{2} \left( x_{0} , 0 , \xi \right) \nonumber \\
& = \left( \norm{M \left( x_{0} \right)} + 2 \varepsilon \left( t_{0} \right) D_{0} \left( x_{0} , 0 , \xi \right) \right) ^{2} .
\end{align*}
This proves statement (i). The strong convergence of the trajectory is a consequence of Proposition \ref{prop:Tikh:converge}.
\end{proof}

\begin{remark}
If one wants to consider dynamics with starting time $t_0=0$, one can choose $\varepsilon \left( t \right) \coloneq \frac{1}{t + c}$, for some constant $c > 0$, which are also solutions of \eqref{gamma:eq}.
\end{remark}

We now turn our attention to the general case of a regularization function.

\begin{proposition}
\label{thm:Tikh:pre}
Let $x \colon \left[ t_{0} , + \infty \right) \to \sH$ be the trajectory solution of the first-order dynamical system with Tikhonov regularization \eqref{ds:Tikhonov}.  Then, it holds as $t \to +\infty$
\begin{equation*}
\norm{\dot{x} \left( t \right)} = \bO \left( \dfrac{1}{\gamma_{\varepsilon} \left( t \right)} \right) + \bO \left( \dfrac{1}{\gamma_{\varepsilon} \left( t \right)} \int_{t_{0}}^{t} \abs{\dot{\varepsilon} \left( u \right)} \gamma_{\varepsilon} \left( u \right) du \right)
\end{equation*}
and, consequently,
\begin{equation*}
\norm{M \left( x \left( t \right) \right)}  = \bO \left( \dfrac{1}{\gamma_{\varepsilon} \left( t \right)} \right) + \bO \left( \dfrac{1}{\gamma_{\varepsilon} \left( t \right)} \int_{t_{0}}^{t} \abs{\dot{\varepsilon} \left( u \right)} \gamma_{\varepsilon} \left( u \right) du \right) + \bO \left( \varepsilon \left( t \right) \right).
\end{equation*}
\end{proposition}
\begin{proof}
Let $t \geq t_{0}$ and $\xi \in \zer M$. By integrating \eqref{Tikh:crit} from $t_{0}$ to $t$, we obtain
\begin{align*}
\gamma_{\varepsilon}^{2} \left( t \right) \psi \left( t \right) -\gamma_{\varepsilon}^{2} \left( t_{0} \right) \psi \left( t_{0} \right) 
& \leq \int_{t_{0}}^{t} \left[ - \dot{\varepsilon} \left( u \right) \right] \gamma_{\varepsilon}^{2} \left( u \right) \langle \dot{x} \left( u \right) , x \left( u \right) \rangle du \nonumber \\
& \leq \int_{t_{0}}^{t} {\abs{\dot{\varepsilon} \left( u \right)}} \gamma_{\varepsilon}^{2} \left( u \right) \norm{\dot{x} \left( u \right)} \norm{x \left( u \right)} du \nonumber \\
& \leq 2D_{0} \left( x_{0} , 0 , \xi \right) \int_{t_{0}}^{t} {\abs{\dot{\varepsilon} \left( u \right)}} \gamma_{\varepsilon}^{2} \left( u \right) \norm{\dot{x} \left( u \right)} du,
\end{align*}	
where the second inequality follows from the Cauchy-Schwarz inequality.
Then,
\begin{align*}
\dfrac{1}{2} \gamma_{\varepsilon}^{2} \left( t \right) \norm{M \left( x \left( t \right) \right) + \varepsilon \left( t \right) x \left( t \right)}^{2} 
& \leq \dfrac{1}{2} \norm{M \left( x_{0} \right) + \varepsilon \left( t_{0} \right) x_{0}}^{2} \nonumber \\
& \quad + 2D_{0} \left( x_{0} , 0 , \xi \right) \int_{t_{0}}^{t} {\abs{\dot{\varepsilon} \left( u \right)}} \gamma_{\varepsilon}^{2} \left( u \right) \norm{M \left( x \left( u \right) \right) + \varepsilon \left( u \right) x \left( u \right)} du .
\end{align*}
According to \cref{lem:Ou-Iang}, we obtain from here that
\begin{equation*}
\gamma_{\varepsilon} \left( t \right) \norm{M \left( x \left( t \right) \right) + \varepsilon \left( t \right) x \left( t \right)} \leq \norm{M \left( x_{0} \right) + \varepsilon \left( t_{0} \right) x_{0}} + 2D_{0} \left( x_{0} , 0 , \xi \right) \int_{t_{0}}^{t} {\abs{\dot{\varepsilon} \left( u \right)}} \gamma_{\varepsilon} \left( u \right) du ,
\end{equation*}
which, after similar arguments as in the previous theorem, leads to
\begin{align*}
\norm{M \left( x \left( t \right) \right) + \varepsilon \left( t \right) x \left( t \right)} 
& \leq \dfrac{1}{\gamma_{\varepsilon} \left( t \right)} \left( \norm{M \left( x_{0} \right) + \varepsilon \left( t_{0} \right) x_{0}} + 2D_{0} \left( x_{0} , 0 , \xi \right) \int_{t_{0}}^{t} {\abs{\dot{\varepsilon} \left( u \right)}} \gamma_{\varepsilon} \left( u \right) du \right) \nonumber \\
& \leq \dfrac{1}{\gamma_{\varepsilon} \left( t \right)} \left( \norm{M \left( x_{0} \right)} + 2D_{0} \left( x_{0} , 0 , \xi \right) \left( \varepsilon \left( t_{0} \right) + \int_{t_{0}}^{t} {\abs{\dot{\varepsilon} \left( u \right)}} \gamma_{\varepsilon} \left( u \right) du \right) \right)
\end{align*}
It is from here that the conclusion follows.
\end{proof}

In the following theorem, we present additional conditions on the regularization function that ensure $\norm{M \left( x(t) \right)} \to 0$ as $t \to +\infty$, leading to the strong convergence of $x(\cdot)$ to the solution of \eqref{intro:mono} with minimal norm.

\begin{theorem}
\label{thm:Tikh}
Let $x \colon \left[ t_{0} , + \infty \right) \to \sH$ be the trajectory solution of the first-order dynamical system with Tikhonov regularization \eqref{ds:Tikhonov}.   Regarding the regularization function $\varepsilon(\cdot)$, assume that, in addition to \eqref{cond:eps}, it satisfies either
\begin{equation}
\label{eps:BV}
\int_{t_{0}}^{+ \infty} \abs{\dot{\varepsilon} \left( t \right)} dt < + \infty.
\end{equation}
or
\begin{equation}
\label{eps:lim}
\lim\limits_{t \to + \infty} \dfrac{\abs{\dot{\varepsilon} \left( t \right)}}{\varepsilon \left( t \right)} = 0.
\end{equation}
Then, $\norm{M \left( x(t) \right)} \to 0$ as $t \to + \infty$, and therefore the trajectory $x(t)$ converges strongly to the minimum norm solution $\xi_{*} \coloneq \proj_{\zer M} \left( 0 \right)$ as $t \to + \infty$.
\end{theorem}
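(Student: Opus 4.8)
By Proposition~\ref{prop:Tikh:converge}, it suffices to prove that $\|M(x(t))\| \to 0$ as $t \to +\infty$. Since $\dot x(t) = -\big(M(x(t)) + \varepsilon(t) x(t)\big)$, the function $\psi$ from \eqref{defi:g} is nothing but $\psi(t) = \tfrac12 \|\dot x(t)\|^2$, and because $x(\cdot)$ is bounded by Lemma~\ref{lem:bnd}(ii) and $\varepsilon(t) \to 0$ by \eqref{cond:eps}, we have $\|M(x(t))\| \leq \|\dot x(t)\| + \varepsilon(t)\|x(t)\|$, so the whole matter reduces to showing $\psi(t) \to 0$ as $t \to +\infty$. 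Note also that $\psi$ is locally absolutely continuous, since $t \mapsto M(x(t))$ is absolutely continuous on compact intervals by assumption, $\varepsilon \in C^1$, and $x(\cdot)$ is absolutely continuous; hence all the a.e.\ differential inequalities below may be integrated.

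The starting point is Lemma~\ref{lem:bnd}(iii). Estimating its right-hand side by Cauchy--Schwarz and Lemma~\ref{lem:bnd}(ii) gives, for a constant $c>0$ depending only on the bound for $\|x(\cdot)\|$,
$$\dot{\psi}(t) + 2\varepsilon(t)\psi(t) \leq |\dot\varepsilon(t)|\,\|\dot x(t)\|\,\|x(t)\| \leq c\,|\dot\varepsilon(t)|\sqrt{\psi(t)} \qquad \text{for a.e. } t \geq t_0.$$
To linearise this, I would introduce $u_\delta(t) := \sqrt{\psi(t) + \delta}$ for $\delta > 0$ (the shift $\delta$ absorbs the possible zeros of $\psi$). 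Dividing the inequality above by $2u_\delta$ and using $\sqrt{\psi} \leq u_\delta$ and $\psi/u_\delta \geq u_\delta - \sqrt{\delta}$ yields $\dot u_\delta(t) + \varepsilon(t) u_\delta(t) \leq \tfrac{c}{2}|\dot\varepsilon(t)| + \sqrt{\delta}\,\varepsilon(t)$ a.e. Multiplying by the integrating factor $\gamma$ from \eqref{defi:gamma} (so that $\dot\gamma = \varepsilon\gamma$ and $\gamma(t_0) = 1$), integrating from $t_0$ to $t$, and then letting $\delta \downarrow 0$, one obtains
\begin{equation*}
\sqrt{\psi(t)} \leq \frac{\sqrt{\psi(t_0)}}{\gamma(t)} + \frac{c}{2\gamma(t)}\int_{t_0}^{t} \gamma(s)\,|\dot\varepsilon(s)|\, ds \qquad \forall t \geq t_0.
\end{equation*}
Since $\gamma(t) \to +\infty$, the first term vanishes, and the proof is reduced to showing that $\frac{1}{\gamma(t)}\int_{t_0}^{t}\gamma(s)|\dot\varepsilon(s)|\,ds \to 0$ as $t \to +\infty$.

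This last limit is where the two extra hypotheses enter and is the technical heart of the argument: it is essentially an L'Hôpital/Cesàro--Stolz estimate that must be carried out by hand, since one cannot presume the integral in the numerator converges. Under \eqref{eps:BV}, I would split $\int_{t_0}^{t} = \int_{t_0}^{T} + \int_{T}^{t}$; monotonicity of $\gamma$ bounds the tail by $\gamma(t)\int_{T}^{+\infty}|\dot\varepsilon|$, so $\limsup_{t\to+\infty}\frac{1}{\gamma(t)}\int_{t_0}^{t}\gamma|\dot\varepsilon| \leq \int_{T}^{+\infty}|\dot\varepsilon|$, which tends to $0$ as $T \to +\infty$. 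Under \eqref{eps:lim}, I would instead write $\gamma(s)|\dot\varepsilon(s)| = \frac{|\dot\varepsilon(s)|}{\varepsilon(s)}\dot\gamma(s)$ and use the same splitting, now bounding the tail by $\big(\sup_{s\geq T}\frac{|\dot\varepsilon(s)|}{\varepsilon(s)}\big)\big(\gamma(t)-\gamma(T)\big)$; sending $t \to +\infty$ and then $T \to +\infty$ gives the claim (alternatively, in this case one may apply Lemma~\ref{lem:limsup} directly to $\dot u_\delta + \varepsilon u_\delta \leq \varepsilon\cdot\tfrac{c}{2}\tfrac{|\dot\varepsilon|}{\varepsilon} + \sqrt{\delta}\,\varepsilon$, using $\int_{t_0}^{+\infty}\varepsilon = +\infty$). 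In either case $\psi(t) \to 0$, hence $\|M(x(t))\| \to 0$, and Proposition~\ref{prop:Tikh:converge} delivers the strong convergence of $x(\cdot)$ to $\xi_* = \proj_{\zer M}(0)$. The main obstacle, as indicated, is twofold: passing from the quadratic-type differential inequality to the clean decay bound on $\sqrt{\psi}$ via the square-root substitution and the integrating factor, and then executing the split-integral argument for $\gamma(t)^{-1}\int_{t_0}^{t}\gamma|\dot\varepsilon|$, which has to be adapted separately to each of the two hypotheses.
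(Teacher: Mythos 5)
Your proposal is correct, and it reaches the conclusion by a genuinely different mechanism than the paper, even though the outer skeleton (show $\psi(t)\to 0$, then invoke Proposition~\ref{prop:Tikh:converge}) is the same. The paper treats the two hypotheses by two separate devices: under \eqref{eps:lim} it keeps the term $-\varepsilon(t)\norm{\dot x(t)}^2$ from \eqref{Tikh:est}, absorbs $\abs{\dot\varepsilon}\norm{\dot x}\norm{x}$ by Young's inequality to get $\dot\psi+\varepsilon\psi\le \varepsilon\cdot\bigl(2D_0^2\abs{\dot\varepsilon}^2/\varepsilon^2\bigr)$ and concludes with Lemma~\ref{lem:limsup}; under \eqref{eps:BV} it works with $\gamma^2\psi$, which forces a Gronwall step (Lemma~\ref{lem:gronwall}) followed by a double limit $t\to+\infty$, then $s\to+\infty$. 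You instead linearize once and for all: the regularized square root $u_\delta=\sqrt{\psi+\delta}$ turns the quadratic-type inequality into $\dot u_\delta+\varepsilon u_\delta\le \tfrac{c}{2}\abs{\dot\varepsilon}+\sqrt{\delta}\,\varepsilon$, the integrating factor $\gamma$ gives the clean unified bound $\sqrt{\psi(t)}\le \gamma(t)^{-1}\sqrt{\psi(t_0)}+\tfrac{c}{2}\gamma(t)^{-1}\int_{t_0}^{t}\gamma\abs{\dot\varepsilon}$, and both hypotheses are then consumed by a single Ces\`aro-type averaging limit, proved by splitting the integral. This buys a Gronwall-free argument with an explicit decay estimate valid under either hypothesis, at the price of the $\delta$-regularization needed to differentiate the square root and of carrying out the averaging limit by hand; note that under \eqref{eps:BV} the limit $\gamma(t)^{-1}\int_{t_0}^{t}\gamma(s)\abs{\dot\varepsilon(s)}\,ds\to 0$ is precisely Lemma~\ref{lem:eint} of the Appendix (with $h=\abs{\dot\varepsilon}$), so that half of your splitting argument could simply be replaced by a citation, while under \eqref{eps:lim} your alternative route through Lemma~\ref{lem:limsup} applied to $u_\delta$ is closest in spirit to the paper's first case. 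All steps check out: the bound $\psi/u_\delta\ge u_\delta-\sqrt{\delta}$, the passage $\delta\downarrow 0$, the tail estimates using monotonicity of $\gamma$ and $\dot\gamma=\varepsilon\gamma$, and the final deduction $\norm{M(x(t))}\le\sqrt{2\psi(t)}+\varepsilon(t)\norm{x(t)}\to 0$ are all valid.
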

\begin{proof}
In the first scenario,  the view of \Cref{prop:Tikh:converge} and \Cref{thm:Tikh:pre},  we only need to guarantee that $\frac{1}{\gamma_{\varepsilon} \left( t \right)} \int_{t_{0}}^{t} \abs{\dot{\varepsilon} \left( u \right)} \gamma_{\varepsilon} \left( u \right) du \to 0$ as $t \to + \infty$.
This can be done via \eqref{eps:BV} in the virtue of \Cref{lem:eint}.

In the second scenario,  let $\xi \in \zer M$ and  $t_{1} \geq t_{0}$ such that $t \mapsto \frac{\abs{\dot{\varepsilon} \left( t \right)}}{\varepsilon(t)}$ is bounded on $[t_{1}, +\infty)$. According to \eqref{Tikh:est} and by using twice the Cauchy-Schwarz inequality and then Lemma \ref{lem:bnd} \ref{lem:bnd:traj-bnd}, we have for almost every $t \geq t_{1}$
\begin{align*}
\dot \psi(t) + \varepsilon(t) \psi(t) \leq & \ -\varepsilon(t) \psi(t) + 
\abs{\dot{\varepsilon} \left( t \right)} \norm{\dot{x} \left( t \right)} \norm{x \left( t \right)} = -\frac{1}{2}\varepsilon(t) \norm{\dot x(t)}^{2} +  \abs{\dot{\varepsilon} \left( t \right)} \norm{\dot{x} \left( t \right)} \norm{x \left( t \right)}\\
\leq & \ \frac{1}{2} \frac{\abs{\dot{\varepsilon} \left( t \right)}^2}{\varepsilon(t)}\norm{x \left( t \right)}^{2} \leq 2 D_{0}^2 \left( x_{0} , 0 , \xi \right) \frac{\abs{\dot{\varepsilon} \left( t \right)}^2}{\varepsilon(t)} = \varepsilon(t) \left(2 D_{0}^2 \left( x_{0} , 0 , \xi \right) \frac{\abs{\dot{\varepsilon} \left( t \right)}^2}{\varepsilon(t)^2} \right).
\end{align*}
By using \cref{lem:limsup}, we obtain $\lim_{t \to + \infty} \psi(t) = 0$.
\end{proof}

\begin{remark}\label{rem:strongconvergence}

\begin{enumerate}
\item 
Condition \eqref{eps:BV} was introduced by Cominetti, Peypouquet, and Sorin in \cite[Theorem 9]{Cominetti-Peypouquet-Sorin}. The technique used to prove $\lim_{t \to +\infty} \psi(t) = 0$ aligns with the approach used in this paper.

\item In \cite{Israel-Reich} (also see \cite[Proposition 5]{Cominetti-Peypouquet-Sorin}), the strong convergence of the trajectory to the minimal norm solution has been demonstrated under assumptions regarding the regularization function which, in addition to \eqref{cond:eps}, entail that ${\varepsilon(\cdot)}$ is decreasing and satisfy the condition
\begin{equation}
\label{eps:lim-2}
\lim_{t \to + \infty} \dfrac{\abs{\dot{\varepsilon} \left( t \right)}}{\varepsilon^{2} \left( t \right)} = 0.
\end{equation}
Condition \eqref{eps:lim} in Theorem \ref{thm:Tikh} represents a notable improvement in this context. Specifically, it accommodates regularization functions of the form $\varepsilon \left( t \right) = \frac{\alpha}{t^{q}}$, where $0 < q \leq 1$ and $\alpha > 0$. This condition allows for the critical choice $q \coloneq 1$, for which \eqref{eps:lim-2} is not fulfilled.
\end{enumerate}
\end{remark}

In the following, we impose additional conditions on the regularization function to obtain explicit convergence rates in terms of $\varepsilon(\cdot)$ and $\gamma_\varepsilon(\cdot)$.
In particular, we will show that as $t \to + \infty$ it holds
\begin{equation}
\label{Tikh:eps:rates:pre}
\norm{\dot{x} \left( t \right)} = 
\bO \left( \max \left\lbrace \varepsilon \left( t \right) , \dfrac{1}{\gamma_{\varepsilon} \left( t \right)} \right\rbrace \right) 
\quad \textrm{ and } \quad
\norm{M \left( x \left( t \right) \right)} = 
\bO \left( \max \left\lbrace \varepsilon \left( t \right) , \dfrac{1}{\gamma_{\varepsilon} \left( t \right)} \right\rbrace \right) .
\end{equation}
For the sake of brevity, we denote
\begin{equation*}
\varepsilon_{i} \coloneq \inf_{t \geq t_{0}} \dfrac{d}{dt} \left[ \dfrac{1}{\varepsilon \left( t \right)} \right] 
= \inf_{t \geq t_{0}} \frac{- \dot{\varepsilon} \left( t \right)}{\varepsilon^{2} \left( t \right)}
\quad \textrm{ and } \quad
\varepsilon_{s} \coloneq \sup_{t \geq t_{0}} \dfrac{d}{dt} \left[ \dfrac{1}{\varepsilon \left( t \right)} \right] 
= \sup_{t \geq t_{0}} \frac{- \dot{\varepsilon} \left( t \right)}{\varepsilon^{2} \left( t \right)} .
\end{equation*}
One may notice that in the critical case  considered in Theorem \ref{thm:Tikh:q=1:alpha=1}
\begin{equation*}
\varepsilon_{i} = \varepsilon_{s} = 1 ,
\end{equation*}
and it is indeed the case that the two quantities within the  Big-$\bO$ rates in \eqref{Tikh:eps:rates:pre} are equal.

\begin{theorem}
\label{thm:Tikh:eps}
Let $x \colon \left[ t_{0} , + \infty \right) \to \sH$ be the trajectory solution of the first-order dynamical system with Tikhonov regularization \eqref{ds:Tikhonov}.
Suppose that the regularization function $\varepsilon(\cdot)$, in addition to \eqref{cond:eps}, satisfies 
\begin{equation}
\label{eps:inf-sup}
1 < \varepsilon_{i}
\quad \textrm{ or } \quad
-1 < \varepsilon_{i} \leq \varepsilon_{s} < 1
\quad \textrm{ or } \quad
\varepsilon_{i} = \varepsilon_{s} = 1 .
\end{equation}
Then, the following statements are true:
\begin{enumerate}
\item 
\label{thm:Tikh:eps:rate}
as $t \to + \infty$ it holds
\begin{align}
\label{Tikh:eps:rates}
\norm{\dot{x} \left( t \right)} & = \begin{cases}
\bO \left( \dfrac{1}{\gamma_{\varepsilon} \left( t \right)} \right) & \textrm{ if } 1 < \varepsilon_{i}  \textrm{ or } \varepsilon_{i} = \varepsilon_{s} = 1 , \\
 \bO \left( \varepsilon \left( t \right) \right) 					& \textrm{ if } -1 < \varepsilon_{i} \leq \varepsilon_{s} < 1 ,
\end{cases}
\\
\label{Tikh:eps:rates1}
\norm{M \left( x \left( t \right) \right)} & = \begin{cases}
\bO \left( \dfrac{1}{\gamma_{\varepsilon} \left( t \right)} \right) & \textrm{ if } 1 < \varepsilon_{i} \textrm{ or } \varepsilon_{i} = \varepsilon_{s} = 1 , \\
\bO \left( \varepsilon \left( t \right) \right) 					& \textrm{ if } -1 < \varepsilon_{i} \leq \varepsilon_{s} < 1  ;
\end{cases} 
\end{align}

\item the trajectory $x(t)$ converges strongly to the minimum norm solution $\xi_{*} \coloneq \proj_{\zer M} \left( 0 \right)$ as $t \to + \infty$.
\end{enumerate}
\end{theorem}
\begin{proof}
For every $t \geq t_{0}$, integration by parts yields
\begin{align}
\label{Tikh:ineg-gen}
\int_{t_{0}}^{t} \left[ - \dot{\varepsilon} \left( u \right) \right] \gamma_{\varepsilon} \left( u \right) du 
& = - \varepsilon \left( u \right) \gamma_{\varepsilon} \left( u \right) \bigg\vert_{r=t_{0}}^{r=t} + \int_{t_{0}}^{t} \varepsilon \left( u \right) \dot{\gamma}_{\varepsilon} \left( u \right) du  \nonumber \\
& =  {\varepsilon \left( t_{0} \right) - \varepsilon \left( t \right) \gamma_{\varepsilon} \left( t \right)} + \int_{t_{0}}^{t} \varepsilon^{2} \left( u \right) \gamma_{\varepsilon} \left( u \right) du .
\end{align}	
We will use this estimate to derive an upper bound for $\int_{t_{0}}^{t} \abs{\dot{\varepsilon} \left( u \right)} \gamma_{\varepsilon} \left( u \right) du$.
Let us consider the two scenarios separately.
\begin{itemize}[wide]
\item 
\underline{The case $1 < \varepsilon_{i}$}.
For every $t \geq t_{0}$, we have
\begin{equation}
\label{Tikh:eps:cond:inf}
1 < \varepsilon_{i}  \leq \dfrac{- \dot{\varepsilon} \left( t \right)}{\varepsilon^{2} \left( t \right)}.
\end{equation}
{It} follows from \eqref{Tikh:ineg-gen} that for every $t \geq t_0$
\begin{align*}
\int_{t_{0}}^{t} \abs{\dot{\varepsilon} \left( u \right)} \gamma_{\varepsilon} \left( u \right) du 
= \int_{t_{0}}^{t} \left[ - \dot{\varepsilon} \left( u \right) \right] \gamma_{\varepsilon} \left( u \right) du
& \leq {\varepsilon \left( t_{0} \right) - \varepsilon \left( t \right) \gamma_{\varepsilon} \left( t \right)} + \dfrac{1}{\varepsilon_{i}} \int_{t_{0}}^{t} \left[ - \dot{\varepsilon} \left( u \right) \right] \gamma_{\varepsilon} \left( u \right) du \nonumber \\
& = \varepsilon \left( t_{0} \right) - \varepsilon \left( t \right) \gamma_{\varepsilon} \left( t \right) + \dfrac{1}{\varepsilon_{i}} \int_{t_{0}}^{t} \abs{\dot{\varepsilon} \left( u \right)} \gamma_{\varepsilon} \left( u \right) du ,
\end{align*}
which is equivalent to
\begin{equation}
\label{Tikh:eps:est:inf}	
\int_{t_{0}}^{t} \abs{\dot{\varepsilon} \left( u \right)} \gamma_{\varepsilon} \left( u \right) du
\leq \dfrac{\varepsilon_{i}}{\varepsilon_{i} - 1} \left(  \varepsilon \left( t_{0} \right) - \varepsilon \left( t \right) \gamma_{\varepsilon} \left( t \right) \right).
\end{equation}
Inequality \eqref{Tikh:eps:cond:inf} also implies $\ddot{\gamma_\varepsilon} (t) = \left[ \dot{\varepsilon} \left( t \right) + \varepsilon^{2} \left( t \right) \right] \gamma_{\varepsilon} \left( t \right) \leq 0$, hence $\dot{\gamma_\varepsilon} \left( t \right) = \varepsilon \left( t \right) \gamma_\varepsilon \left( t \right) \leq \dot{\gamma_\varepsilon} \left( t_{0} \right) = \varepsilon \left( t_{0} \right)$.
Combining this estimate with \eqref{Tikh:eps:est:inf} and Proposition \ref{thm:Tikh:pre} we get the rates given in the first case in  \eqref{Tikh:eps:rates} and \eqref{Tikh:eps:rates1}.

\item
\underline{The case {$-1 < \varepsilon_{i} \leq \varepsilon_{s} < 1$}}.
For every $t \geq t_{0}$
\begin{equation}
\label{Tikh:eps:cond:sup}
-1 < \varepsilon_{i} \leq \dfrac{- \dot{\varepsilon} \left( t \right)}{\varepsilon^{2} \left( t \right)} \leq {\varepsilon_{s} < 1} .
\end{equation}
We denote  $\varepsilon_{\max} := \max \left\lbrace \abs{\varepsilon_{i}} , \abs{\varepsilon_{s}} \right\rbrace < 1$, thus
\begin{equation*}
\abs{\dot{\varepsilon} \left( t \right)} \leq \varepsilon_{\max} \varepsilon^{2} \left( t \right) \quad \forall t \geq t_0.
\end{equation*}
Therefore, we can derive from \eqref{Tikh:ineg-gen} that for every $t \geq t_0$
\begin{align*}
\int_{t_{0}}^{t} \abs{\dot{\varepsilon} \left( u \right)} \gamma_{\varepsilon} \left( u \right) du
\geq \int_{t_{0}}^{t} \left[ - \dot{\varepsilon} \left( u \right) \right] \gamma_{\varepsilon} \left( u \right) du 
& \geq {\varepsilon \left( t_{0} \right) - \varepsilon \left( t \right) \gamma_{\varepsilon} \left( t \right)} + \dfrac{1}{\varepsilon_{\max}} \int_{t_{0}}^{t} \abs{\dot{\varepsilon} \left( u \right)} \gamma_{\varepsilon} \left( u \right) du .
\end{align*}	
which implies
\begin{equation}
\label{Tikh:eps:est:sup}
\int_{t_{0}}^{t} \abs{\dot{\varepsilon} \left( u \right)}  \gamma_{\varepsilon} \left( u \right) du
\leq \dfrac{\varepsilon_{\max}}{1 - \varepsilon_{\max}} \left( \varepsilon \left( t \right) \gamma_{\varepsilon} \left( t \right) - \varepsilon \left( t_{0} \right) \right) .
\end{equation}
Similar to the previous case,  \eqref{Tikh:eps:cond:sup} leads to $\ddot{\gamma_\varepsilon} (t) = \left[ \dot{\varepsilon} \left( t \right) + \varepsilon^{2} \left( t \right) \right] \gamma_{\varepsilon} \left( t \right) \geq 0$ hence $\dot{\gamma_{\varepsilon}} \left( t \right) = \varepsilon \left( t \right) \gamma_{\varepsilon} \left( t \right) \geq \dot{\gamma_{\varepsilon}} \left( t_{0} \right) = \varepsilon \left( t_{0} \right)$.
The rates given in the second case in \eqref{Tikh:eps:rates} and  \eqref{Tikh:eps:rates1} follow from the estimate above together with \eqref{Tikh:eps:est:sup} and Proposition \ref{thm:Tikh:pre}.
\end{itemize}

Since $\gamma_{\varepsilon} \left( t \right) \to + \infty$ and $\varepsilon \left( t \right) \to 0$,  we have $\norm{M \left( x (t) \right)} \to 0$ as $t \to + \infty$,  which leads to the strong convergence of the trajectory according to \Cref{prop:Tikh:converge}.
\end{proof}

As {a first} application of the theoretical considerations above, we consider dynamics generated by \eqref{ds:Tikhonov} for the regularization function
\begin{equation*}
\varepsilon \colon \left[ t_{0} , +\infty \right) \to \sR, \quad \varepsilon \left( t \right) \coloneq \frac{\alpha}{t}, \quad \textrm{ with } t_{0} > 0 \textrm{ and } \alpha > 0,
\end{equation*}
which evidently satisfies \eqref{cond:eps}.  We have $\gamma_{\varepsilon} \left( t \right) = \left( \frac{t}{t_{0}} \right) ^{\alpha}$ for every $t \geq t_{0}$, and moreover $\varepsilon_i = \varepsilon_s = \frac{1}{\alpha}$. The cases $0 < \alpha < 1$ { and $\alpha > 1$} correspond to the {two} scenarios in \Cref{thm:Tikh:eps}, while the critical case $\alpha=1$ has been considered in Theorem \ref{thm:Tikh:q=1:alpha=1}.

\begin{corollary}[the case $\varepsilon \left( t \right) \coloneq \frac{\alpha}{t}$]
\label{thm:Tikh:q=1}
Let $t_{0} > 0$, $\alpha > 0$, and $x \colon \left[ t_{0} , + \infty \right) \to \sH$ be the trajectory solution of the dynamical system
\begin{equation}
\label{ds:q=1}
\dot{x} \left( t \right) + M \left( x \left( t \right) \right) + \dfrac{\alpha}{t} x \left( t \right) = 0 ,
\end{equation}
with initial condition $x \left( t_{0} \right) \coloneq x_{0} \in \sH$. 
Then, the following statements are true:
\begin{enumerate}
\item 
as $t \to + \infty$ it holds
\begin{align*}
\norm{\dot{x} \left( t \right)} = \bO \left( \dfrac{1}{t^{\min \left\lbrace \alpha , 1 \right\rbrace}} \right)
\quad \textrm{ and } \quad
\norm{M \left( x \left( t \right) \right)} = \bO \left( \dfrac{1}{t^{\min \left\lbrace \alpha , 1 \right\rbrace}} \right);
\end{align*}

\item the trajectory $x(t)$ converges strongly to the minimum norm solution $\xi_{*} \coloneq \proj_{\zer M} \left( 0 \right)$ as $t \to + \infty$.
\end{enumerate}
\end{corollary}

A choice for the regularization function closely related to the previous one is
\begin{equation*}
\varepsilon \colon \left[ t_{0} , +\infty \right) \to \sR, \quad \varepsilon \left( t \right) \coloneq \frac{\alpha}{t^{q}}, \quad \textrm{ with } t_{0} > 0, \quad  0 < q < 1, \textrm{ and } \alpha > 0,
\end{equation*}
which satisfies \eqref{cond:eps},  too. 
Furthermore, in this case we have $\gamma_{\varepsilon} \left( t \right) = \exp \left( \frac{\alpha}{1-q} \left( t^{1-q} - t_{0}^{1-q} \right) \right)$ for every $t \geq t_{0}$.
In addition, $0 \leq \varepsilon_s = \sup_{t \geq t_{0}} \frac{- \dot{\varepsilon} \left( t \right)}{\varepsilon^{2} \left( t \right)} = \sup_{t \geq t_{0}}  \frac{q}{\alpha t^{1-q}} = \frac{q}{\alpha t_{0}^{1-q}}$,  hence the statements follow from  \Cref{thm:Tikh:eps}.

\begin{corollary}	[the case $\varepsilon \left( t \right) \coloneq \frac{\alpha}{t^q}, 0 < q < 1$]
\label{thm:Tikh:0<q<1}
Let $t_{0} > 0, 0 < q < 1$, $\alpha >0$ such that $\alpha t_{0}^{1-q} > {q}$, and $x \colon \left[ t_{0} , + \infty \right) \to \sH$ be the trajectory solution of the dynamical system
\begin{equation}
\label{ds:0<q<1}
\dot{x} \left( t \right) + M \left( x \left( t \right) \right) + \dfrac{\alpha}{t^{q}} x \left( t \right) = 0,
\end{equation}
with initial condition $x \left( t_{0} \right) \coloneq x_{0} \in \sH$. Then, the following statements are true:
\begin{enumerate}	
\item 
as $t \to + \infty$ it holds
\begin{align*}
\norm{\dot{x} \left( t \right)} = \bO \left( \dfrac{1}{t^{q}} \right)
\quad \textrm{ and } \quad
\norm{M \left( x \left( t \right) \right)} = \bO \left( \dfrac{1}{t^{q}} \right);
\end{align*}

\item 
the trajectory $x(t)$ converges strongly to the minimum norm solution $\xi_{*} \coloneq \proj_{\zer M} \left( 0 \right)$ as $t \to + \infty$.
\end{enumerate}
\end{corollary}

The above theorems suggest that the best rates of convergence occur when they are $\mathcal{O}(\frac{1}{t})$ as $t \rightarrow +\infty$, achieved when $q=1$ and $\alpha \geq 1$. This represents a clear improvement concerning the convergence rates compared to those of the unregularized dynamical system, even in cases where the operator is cocoercive (see Theorem \ref{thm:cocoercive}). Should one aim to enhance the rates of convergence for both the velocity and the operator norm by opting for $q > 1$, it is important to note, as highlighted in Remark \ref{remark21} (ii), that strong convergence of the trajectory to the minimum norm solution cannot be expected.

For $t \geq t_{0} >1$, we consider the regularization function $\varepsilon \colon \left[ t_{0}, +\infty \right) \to \sR,  \varepsilon(t) = \frac{1}{t \log(t)}$. 
Notice that 
\begin{equation*}
\varepsilon_i = \inf_{t \geq t_{0}} \dfrac{d}{dt} \left[ \dfrac{1}{\varepsilon \left( t \right)} \right] 
= \inf_{t \geq t_{0}} \dfrac{d}{dt} \left( t \log \left( t \right) \right) 
= \inf_{t \geq t_{0}} \left( \log \left( t \right) + 1 \right)
= \log \left( t_{0} \right) + 1 > 1 .
\end{equation*}
In addition, condition \eqref{eps:lim} as well as condition \eqref{eps:BV} is satisfied.  
For every $t \geq t_{0}$ it holds
\begin{equation*}
\gamma_{\varepsilon} \left( t \right) = \exp \left( \int_{t_{0}}^{t} \frac{1}{s \log(s)} ds \right) = \exp \left( \log(\log(t)) - \log(\log(t_{0})) \right) = \frac{\log(t)}{\log(t_{0})},
\end{equation*}
thus \Cref{thm:Tikh:eps} leads to the following result.

\begin{corollary}
	[the case $\varepsilon \left( t \right) \coloneq \frac{1}{t \log \left( t \right)}$]
\label{thm:Tikh:loglog}
Let $t_{0} > 1$ and $x \colon \left[ t_{0} , + \infty \right) \to \sH$ be the trajectory solution of the dynamical system
\begin{equation}
\label{ds:loglog}
\dot{x} \left( t \right) + M \left( x \left( t \right) \right) + \dfrac{1}{t \log(t)} x \left( t \right) = 0 ,
\end{equation}
with initial condition $x \left( t_{0} \right) \coloneq x_{0} \in \sH$. Then, the following statements are true:
\begin{enumerate}	
\item 
as $t \to + \infty$ it holds
\begin{align*}
\norm{\dot{x} \left( t \right)} = \bO \left( \dfrac{1}{\log \left( t \right)} \right)
\quad \textrm{ and } \quad
\norm{M \left( x \left( t \right) \right)} = \bO \left( \dfrac{1}{\log \left( t \right)} \right) .
\end{align*}

\item the trajectory $x(t)$ converges strongly to the minimum norm solution $\xi_{*} \coloneq \proj_{\zer M} \left( 0 \right)$ as $t \to + \infty$.
\end{enumerate}
\end{corollary}

We have seen that even if the regularization function converges to zero faster than {$\bO \left( \frac{1}{t} \right)$}, while satisfying the standing assumptions \eqref{cond:eps},  the rates of convergence for the  velocity of the trajectory and the operator along the trajectory cannot be expected to be faster than $\mathcal{O}(\frac{1}{t})$ as $t \rightarrow +\infty$.

\subsection{First-order dynamical system with Tikhonov regularization and time rescaling}\label{sec23}

In this subsection, we will extend the previous analysis to first-order dynamical systems that incorporate Tikhonov regularization and time rescaling, expressed through the presence of a time-dependent coefficient of the operator. With the introduction and analysis of this continuous dynamic, we take the first step in our attempt to demonstrate the close connection between first-order dynamical systems with Tikhonov regularization and second-order dynamical systems with a vanishing damping term associated with the monotone equation \eqref{intro:mono}.

\begin{mdframed}
Let $s_{0} \geq 0$ and $\beta, \delta \colon \left[ s_{0} , + \infty \right) \to \sR_{++}$ be given functions.
We consider on $\left[ s_{0} , + \infty \right)$ the following dynamical system
\begin{equation}
\label{ds:rescaling}
\dot{z} \left( s \right) + \beta \left( s \right) M \left( z \left( s \right) \right) + \delta \left( s \right) z \left( s \right) = 0 ,
\end{equation}
with initial condition $z \left( s_{0} \right) \coloneq z_{0} \in \sH$.
\end{mdframed}

For the time rescaling function $\beta$ and the regularization function $\delta$, we consider the following assumptions.
\begin{mdframed}
The functions $\beta, \delta \colon \left[ s_{0} , + \infty \right) \to \sR_{++}$ are required to be continuously differentiable and to satisfy
\begin{equation}
\label{cond:beta-delta}
\lim_{s \to + \infty} \dfrac{\delta \left( s \right)}{\beta \left( s \right)} = 0 , \quad
\int_{s_{0}}^{+ \infty} \delta \left( s \right) ds = + \infty ,
\quad \textrm{ and } \quad
\int_{s_{0}}^{+ \infty} \beta \left( s \right) ds = + \infty .
\end{equation}
The first-order dynamical system is assumed to admit a unique strong global solution $z : [s_{0}, +\infty) \rightarrow \sH$ with the property that $s \mapsto M(x(s))$ is absolutely continuous on every compact interval $[s_{0},S]$.
\end{mdframed}

The following proposition will allows us to transfer the statements obtained for \eqref{ds:Tikhonov} in the previous section to \eqref{ds:rescaling}.
\begin{proposition}
\label{prop:time-rescaling}
Let $s_{0}, t_{0} \geq 0$ and $\beta, \delta \colon \left[ s_{0} , + \infty \right) \to \sR_{++}$, $\varepsilon \colon \left[ t_{0} , + \infty \right) \to \sR_{++}$ be continuously differentiable functions.
Suppose that the time rescaling function satisfies
\begin{equation*}
\int_{s_{0}}^{+ \infty} \beta \left( s \right) ds = + \infty .
\end{equation*}
Let $x \colon \left[ t_{0} , + \infty \right) \to \sH$ be the trajectory solution of
\begin{equation}
\label{ds:eq:Tikhonov}
\dot{x} \left( t \right) + M \left( x \left( t \right) \right) + \varepsilon \left( t \right) x \left( t \right) = 0 \quad \textrm{ with } \quad x \left( t_{0} \right) \coloneq z_{0} \in \sH ,
\end{equation}
and $z \colon \left[ s_{0} , + \infty \right) \to \sH$ be the trajectory solution of
\begin{equation}
\label{ds:eq:rescaling}
\dot{z} \left( s \right) + \beta \left( s \right) M \left( z \left( s \right) \right) + \delta \left( s \right) z \left( s \right) = 0 \quad \textrm{ with } \quad z \left( s_{0} \right) \coloneq z_{0} \in \sH .
\end{equation}
Then the two trajectories are equivalent, subject to a time rescaling process. Precisely, there exist bijective functions $\tau \colon \left[ s_{0} , + \infty \right) \to \left[ t_{0} , + \infty \right)$ and $\sigma \colon \left[ t_{0} , + \infty \right) \to \left[ s_{0} , + \infty \right)$ such that $\sigma = \tau^{-1}$ and
\begin{enumerate}
\item if $x(\cdot)$ is the trajectory solution of \eqref{ds:eq:Tikhonov}, then $z(s) \coloneq x \left( \tau \left( s \right) \right)$ defines the trajectory solution of \eqref{ds:eq:rescaling} with $\delta(s) \coloneq \varepsilon \left( \tau \left( s \right) \right) \beta \left( s \right)$ for every $s \geq s_{0}$;

\item if $z(\cdot)$  is the trajectory solution of \eqref{ds:eq:rescaling}, then $x \left( t \right) \coloneq z \left( \sigma \left( t \right) \right)$ defines the trajectory solution of \eqref{ds:eq:Tikhonov} with $\varepsilon(t) \coloneq \frac{\delta \left( \sigma \left( t \right) \right)}{\beta \left( \sigma \left( t \right) \right)}$ for every $t \geq t_{0}$.
\end{enumerate}
Moreover,
\begin{equation*}
\lim_{t \to + \infty} \varepsilon \left( t \right) = 0  \Leftrightarrow \lim_{s \to + \infty} \dfrac{\delta \left( s \right)}{\beta \left( s \right)} = 0 \quad
\textrm{ and } \quad
\int_{t_{0}}^{+ \infty} \varepsilon \left( t \right) dt = + \infty \Leftrightarrow \int_{s_{0}}^{+ \infty} \delta \left( s \right) ds = + \infty.
\end{equation*}
\end{proposition}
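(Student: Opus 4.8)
The plan is to produce the time-change $\tau$ explicitly as the inverse of an antiderivative of $\beta$, verify it is a bijection $[s_0,+\infty)\to[t_0,+\infty)$ using the hypothesis $\int_{s_0}^{+\infty}\beta(s)\,ds=+\infty$, and then check by the chain rule that the rescaled trajectory solves the other equation. Concretely, first I would define
\begin{equation*}
\tau(s) := t_0 + \int_{s_0}^{s} \beta(r)\,dr, \qquad s \geq s_0.
\end{equation*}
Since $\beta > 0$ and continuously differentiable, $\tau$ is $C^1$, strictly increasing, $\tau(s_0) = t_0$, and $\tau(s) \to +\infty$ as $s \to +\infty$ precisely because $\int_{s_0}^{+\infty}\beta = +\infty$; hence $\tau$ is a bijection onto $[t_0,+\infty)$ with a $C^1$ inverse $\sigma := \tau^{-1}$ satisfying $\dot\sigma(t) = 1/\beta(\sigma(t))$ and $\dot\tau(s) = \beta(s)$.

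Next I would carry out the two verifications. For the first bullet, assume $x(\cdot)$ solves \eqref{ds:eq:Tikhonov} and set $z(s) := x(\tau(s))$. Then $z(s_0) = x(t_0) = z_0$, and by the chain rule, for a.e.\ $s$,
\begin{equation*}
\dot z(s) = \dot\tau(s)\,\dot x(\tau(s)) = \beta(s)\,\dot x(\tau(s)) = -\beta(s)\big( M(x(\tau(s))) + \varepsilon(\tau(s)) x(\tau(s))\big) = -\beta(s) M(z(s)) - \varepsilon(\tau(s))\beta(s)\, z(s),
\end{equation*}
so $z$ solves \eqref{ds:eq:rescaling} with $\delta(s) := \varepsilon(\tau(s))\beta(s)$; one should note here that $s\mapsto M(z(s)) = M(x(\tau(s)))$ is absolutely continuous on compact intervals because $\tau$ is a $C^1$ diffeomorphism and $t\mapsto M(x(t))$ has that property, and that uniqueness of the strong global solution (guaranteed by the standing assumptions) makes this the trajectory solution. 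For the second bullet, assume $z(\cdot)$ solves \eqref{ds:eq:rescaling} and set $x(t) := z(\sigma(t))$; the same computation with $\dot\sigma(t) = 1/\beta(\sigma(t))$ gives
\begin{equation*}
\dot x(t) = \frac{1}{\beta(\sigma(t))}\dot z(\sigma(t)) = -M(z(\sigma(t))) - \frac{\delta(\sigma(t))}{\beta(\sigma(t))} z(\sigma(t)) = -M(x(t)) - \varepsilon(t) x(t),
\end{equation*}
with $\varepsilon(t) := \delta(\sigma(t))/\beta(\sigma(t))$, and $x(t_0) = z(s_0) = z_0$. The two constructions are mutually inverse because $\sigma = \tau^{-1}$, so the relations $\delta(s) = \varepsilon(\tau(s))\beta(s)$ and $\varepsilon(t) = \delta(\sigma(t))/\beta(\sigma(t))$ are equivalent to one another.

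Finally I would verify the two equivalences. The limit equivalence is immediate: $\varepsilon(t) = \delta(\sigma(t))/\beta(\sigma(t))$ and $\sigma(t)\to+\infty$ as $t\to+\infty$, so $\lim_{t\to+\infty}\varepsilon(t) = 0 \iff \lim_{s\to+\infty}\delta(s)/\beta(s) = 0$. For the integral equivalence, I would substitute $t = \tau(s)$, $dt = \beta(s)\,ds$ in $\int_{t_0}^{+\infty}\varepsilon(t)\,dt$; since $\varepsilon(\tau(s)) = \delta(s)/\beta(s)$, this gives
\begin{equation*}
\int_{t_0}^{+\infty}\varepsilon(t)\,dt = \int_{s_0}^{+\infty}\varepsilon(\tau(s))\beta(s)\,ds = \int_{s_0}^{+\infty}\delta(s)\,ds,
\end{equation*}
so the two integrals are literally equal and the equivalence is trivial. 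I do not expect any serious obstacle here; the only points requiring a little care are making the change of variables rigorous for merely absolutely continuous trajectories (which is standard since $\tau$ is $C^1$ with $\tau,\tau^{-1}$ Lipschitz on compact intervals) and invoking uniqueness of strong global solutions to identify the rescaled function as \emph{the} trajectory solution rather than merely \emph{a} solution.
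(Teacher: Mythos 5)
Your proposal is correct and follows essentially the same route as the paper: defining $\tau(s) = t_{0} + \int_{s_{0}}^{s}\beta(r)\,dr$, using $\int_{s_0}^{+\infty}\beta = +\infty$ for bijectivity, applying the chain rule in both directions, and transferring the limit and integral conditions by the substitution $t=\tau(s)$. The extra remarks on absolute continuity of $s \mapsto M(z(s))$ and on uniqueness of the strong global solution are sensible refinements but do not change the argument.
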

\begin{proof}
We define $\tau \colon \left[ s_{0} , + \infty \right) \to \left[ t_{0} , + \infty \right)$ for every $s \geq s_{0}$ as
\begin{equation}
\label{defi:tau}
\tau \left( s \right) = t_{0} + \int_{s_{0}}^{s} \beta \left( u \right) du .
\end{equation}
Since $\beta$ is strictly positive and continuous, $\tau$ is a monotonically increasing and thus injective. Moreover, it holds $\tau \left( s_{0} \right) = t_{0}$ and $\lim_{s \to + \infty} \tau \left( s \right) = + \infty$, meaning that $\sigma$ is also surjective and therefore, bijective. For clarity, we will denote its inverse $\tau^{-1}$ by $\sigma \colon \left[ t_{0} , + \infty \right) \to \left[ s_{0} , + \infty \right)$.

\item[\underline{``$x(t) \Mapsto z(s)$''}.] {Let} $x(\cdot)$ be the trajectory solution of \eqref{ds:eq:Tikhonov}. We perform a change of time variable, setting $t \coloneq \tau(s)$ in the dynamics \eqref{ds:eq:Tikhonov}, and define for every $s \geq s_{0}$
\begin{equation}
	\label{defi:x-tau}
z \left( s \right) \coloneq x \left( \tau \left( s \right) \right) .
\end{equation}
Then by the chain rule, we have for almost every $s \geq s_{0}$
\begin{equation*}
\dot{z} \left( s \right) = \dot{x} \left( \tau \left( s \right) \right) \dot{\tau} \left( s \right) = \dot{x} \left( \tau \left( s \right) \right) \beta \left( s \right) .
\end{equation*}
At the time $t = \tau \left( s \right)$, multiplying both sides of \eqref{ds:eq:Tikhonov} by $\beta \left( s \right) > 0$ yields for almost every $s \geq s_{0}$
\begin{equation*}
\dot{z} \left( s \right) + \beta \left( s \right) M \left( z \left( s \right) \right) + \varepsilon \left( \tau \left( s \right) \right) \beta \left( s \right) z \left( s \right) = 0 .
\end{equation*}
This is equivalent to \eqref{ds:eq:rescaling} if for every $s \geq s_{0}$ we choose
\begin{equation}
\label{rescaling:delta-eps-beta}
\delta \left( s \right) \coloneq \varepsilon \left( \tau \left( s \right) \right) \beta \left( s \right) .
\end{equation}

Suppose that $\lim_{t \to + \infty} \varepsilon \left( t \right) = 0$ and $\int_{t_{0}}^{+ \infty} \varepsilon \left( t \right) dt = + \infty$.
It follows from \eqref{rescaling:delta-eps-beta} and the property that $\lim_{s \to + \infty} \tau \left( s \right) = + \infty$ that
\begin{equation*}
	\lim_{s \to + \infty} \dfrac{\delta \left( s \right)}{\beta \left( s \right)} = \lim_{s \to + \infty} \varepsilon \left( \tau \left( s \right) \right) = 0 .
\end{equation*}
Furthermore, for every $s \geq s_{0}$, we have
\begin{equation}
\label{rescaling:eq:exp}
\int_{s_{0}}^{s} \delta \left( u \right) du
= \int_{s_{0}}^{s} \varepsilon \left( \tau \left( u \right) \right) \beta \left( u \right) du 
= \int_{s_{0}}^{s} \varepsilon \left( \tau \left( u \right) \right) \dot{\tau} \left( u \right) du
= \int_{\tau \left( s_{0} \right)}^{\tau \left( s \right)} \varepsilon \left( r \right) dr
= \int_{t_{0}}^{\tau \left( s \right)} \varepsilon \left( r \right) dr .
\end{equation}
Taking the limit as $s$ approaches $+ \infty$, we obtain $\int_{s_{0}}^{+ \infty} \delta \left( u \right) du = + \infty$.

\item[\underline{``$x(t) \Mapsfrom z(s)$''}.]	
{Let} $z(\cdot)$ be the trajectory solution of \eqref{ds:eq:rescaling}. We substitute $s \coloneq \sigma \left( t \right)$ in the dynamic \eqref{ds:eq:rescaling}.
By definition, for every $t \geq t_{0}$ it holds
\begin{equation*}
\tau \left( \sigma \left( t \right) \right) = t
\end{equation*}
and differentiation with respect to $t$ gives
\begin{equation}
\label{defi:d-sigma}
\dot{\tau} \left( \sigma \left( t \right) \right) \dot{\sigma} \left( t \right) = \beta \left( \sigma \left( t \right) \right) \dot{\sigma} \left( t \right) = 1 \Leftrightarrow \dot{\sigma} \left( t \right) = \dfrac{1}{\beta \left( \sigma \left( t \right) \right)} .
\end{equation}
Therefore, if we set for every $t \geq t_{0}$
\begin{equation*}
x \left( t \right) \coloneq z \left( \sigma \left( t \right) \right) ,
\end{equation*}
by the chain rule it yields for almost every $t \geq t_{0}$
\begin{equation*}
\dot{x} \left( t \right) = \dot{z} \left( \sigma \left( t \right) \right) \dot{\sigma} \left( t \right) = \dot{z} \left( \sigma \left( t \right) \right) \cdot \dfrac{1}{\beta \left( \sigma \left( t \right) \right)} .
\end{equation*}
Consequently, at the time $s = \sigma \left( t \right)$, dividing both sides of equation \eqref{ds:eq:rescaling} by $\beta \left( \sigma \left( t \right) \right) > 0$, we deduce that for almost every $t \geq t_{0}$
\begin{equation}
\dot{x} \left( t \right) + M \left( x \left( t \right) \right) + \dfrac{\delta \left( \sigma \left( t \right) \right)}{\beta \left( \sigma \left( t \right) \right)} x \left( t \right) = 0.
\end{equation}
By setting for every $t \geq t_{0}$
\begin{equation}
\label{defi:eps-sigma}
\varepsilon \left( t \right) \coloneq \dfrac{\delta \left( \sigma \left( t \right) \right)}{\beta \left( \sigma \left( t \right) \right)},
\end{equation}
this is nothing else than \eqref{ds:eq:Tikhonov}.

Suppose that $\lim_{s \to + \infty} \frac{\delta \left( s \right)}{\beta \left( s \right)} = 0$ and $\int_{s_{0}}^{+ \infty} \delta \left( s \right) ds = + \infty$. By following previous arguments, we have
\begin{equation*}
\lim_{t \to + \infty} \varepsilon \left( t \right) = \lim_{t \to + \infty} \dfrac{\delta \left( \sigma \left( t \right) \right)}{\beta \left( \sigma \left( t \right) \right)} = 0 .
\end{equation*}
Furthermore, for every $s \geq s_{0}$ it holds 
\begin{equation*}
\int_{t_{0}}^{t} \varepsilon \left( r \right) dr = \int_{t_{0}}^{t} \dfrac{\delta \left( \sigma \left( r \right) \right)}{\beta \left( \sigma \left( r \right) \right)} dr = \int_{t_{0}}^{t} \delta \left( \sigma \left( r \right) \right) \dot{\sigma} \left( r \right) dr = \int_{\sigma \left( t_{0} \right)}^{\sigma \left( t \right)} \delta \left( u \right) du = \int_{s_{0}}^{\sigma \left( t \right)} \delta \left( u \right) du ,
\end{equation*}
thus, by taking the limit as $t$ approaches $+ \infty$, we conclude  $\int_{t_{0}}^{+ \infty} \varepsilon \left( t \right) dt = + \infty$.
\end{proof}

\cref{prop:time-rescaling} allows us to transfer the convergence and convergence rate results stated in \cref{thm:Tikh,thm:Tikh:eps}, respectively, to the dynamical system \eqref{ds:rescaling}.  It would be useful to mention that if $\varepsilon(\cdot)$ satisfies \eqref{defi:eps-sigma}, then for every $t \geq t_{0}$ we have
\begin{align}
\label{der:eps-sigma}
\dot{\varepsilon} \left( t \right) & =  \dfrac{1}{\beta \left( \sigma (t) \right)} \left[ \dfrac{d}{ds} \left( \dfrac{\delta}{\beta} \right) \right] \left( \sigma (t) \right) 
= \dfrac{\delta \left( \sigma (t) \right)}{\beta^{2} \left( \sigma (t) \right)} \left( \dfrac{\dot{\delta} \left( \sigma (t) \right)}{\delta \left( \sigma (t) \right)} - \dfrac{\dot{\beta} \left( \sigma (t) \right)}{\beta \left( \sigma (t) \right)} \right) .
\end{align}

\begin{theorem}
\label{thm:rescaling}
Let $z \colon \left[ s_{0} , + \infty \right) \to \sH$ be the trajectory solution of the dynamical system \eqref{ds:rescaling}. Regarding $\beta$ and $\delta$, assume that, in addition to \eqref{cond:beta-delta}, they satisfy either
\begin{equation}
\label{rescaling:BV}
\int_{s_{0}}^{+ \infty} \abs{\left[ \dfrac{d}{ds} \left( \dfrac{\delta}{\beta} \right) \right] \left( s \right)} ds < + \infty
\end{equation}
or
\begin{equation}
\label{rescaling:lim}
\lim\limits_{s \to + \infty} \dfrac{1}{\beta \left( s \right)} \abs{\dfrac{\dot{\delta} \left( s \right)}{\delta \left( s \right)} - \dfrac{\dot{\beta} \left( s \right)}{\beta \left( s \right)}} = 0.
\end{equation}
Then, the trajectory $z(s)$ converges strongly to the minimum norm solution $\xi_{*} \coloneq \proj_{\zer M} \left( 0 \right)$ as $s \to + \infty$.
\end{theorem}
\begin{proof}
Given the trajectory solution $z(\cdot)$ of \eqref{ds:rescaling} and
\begin{equation*}
\tau(s) = t_{0} + \int_{s_{0}}^{s} \beta \left( u \right) du  \quad \forall s \geq s_{0},
\end{equation*}
where $t_{0} \geq 0$, then, according to \cref{prop:time-rescaling}, $x(\cdot) \coloneq z \left( \tau^{-1}(\cdot) \right)$ is the trajectory solution of \eqref{ds:Tikhonov} with
\begin{equation*}
\varepsilon(t) \coloneq \dfrac{\delta\left( \tau^{-1}(t) \right)}{\beta\left( \tau^{-1}(t) \right)} \quad \forall t \geq t_{0} .
\end{equation*}

According to \eqref{der:eps-sigma}, for every $t \geq t_{0}$ we have
\begin{equation*}
\dfrac{\abs{\dot{\varepsilon} \left( t \right)}}{\varepsilon \left( t \right)}
= \dfrac{1}{\beta\left( \tau^{-1}(t) \right)} \abs{\left[ \dfrac{d}{ds} \left( \dfrac{\delta}{\beta} \right) \right] \left( \tau^{-1}(t) \right)} \frac{\beta\left( \tau^{-1}(t) \right)}{\delta\left( \tau^{-1}(t) \right)} = \dfrac{1}{\beta \left( \tau^{-1}(t) \right)}\abs{\dfrac{\dot{\delta} \left( \tau^{-1}(t) \right)}{\delta \left( \tau^{-1}(t) \right)} - \dfrac{\dot{\beta} \left( \tau^{-1}(t) \right)}{\beta \left( \tau^{-1}(t) \right)}} .
\end{equation*}
Therefore, if \eqref{rescaling:lim} is satisfied, then
\begin{equation*}
\lim_{t \to +\infty} \dfrac{\abs{\dot{\varepsilon} \left( t \right)}}{\varepsilon \left( t \right)} = 0.
\end{equation*}

On the other hand, by using a substitution argument,
\begin{equation*}
\int_{t_{0}}^{+\infty} \abs{\dot{\varepsilon} \left( t \right)} dt = \int_{t_{0}}^{+\infty} \dfrac{1}{\beta\left( \tau^{-1}(t) \right)} \abs{\left[ \dfrac{d}{ds} \left( \dfrac{\delta}{\beta} \right) \right] \left( \tau^{-1}(t) \right)} dt = \int_{s_{0}}^{+ \infty} \abs{\left[ \dfrac{d}{ds} \left( \dfrac{\delta}{\beta} \right) \right] \left( s \right)} ds.
\end{equation*}
Therefore, if \eqref{rescaling:BV} is satisfied, then
\begin{equation*}
\int_{t_{0}}^{+\infty} \abs{\dot{\varepsilon} \left( t \right)} dt  < +\infty.
\end{equation*}

Invoking Theorem \ref{thm:Tikh}, the trajectory $x(t) = z \left( \tau^{-1}(t) \right)$ converges strongly to the minimum norm solution $\xi_{*} \coloneq \proj_{\zer M} \left( 0 \right)$ as $t \to + \infty$. This is nothing else than the trajectory $z(s)$ converges strongly to  $\xi_{*}$ as $s \to + \infty$.
\end{proof}

To obtain convergence rates, we need to derive conditions similar to those in \eqref{eps:inf-sup}.
This can be done using the formula \eqref{der:eps-sigma} above.  We first notice that for every $s \geq s_{0}$
\begin{equation*}
\dfrac{- \frac{1}{\beta \left( s \right)} \left[ \frac{d}{ds} \left( \frac{\delta}{\beta} \right) \right] \left( s \right)}{\frac{\delta^{2} \left( s \right)}{\beta^{2} \left( s \right)}}
= - \dfrac{\beta^{2} \left( s \right)}{\delta^{2} \left( s \right)} \dfrac{\dot{\delta} \left( s \right) \beta \left( s \right) - \delta \left( s \right) \dot{\beta} \left( s \right)}{\beta^{3} \left( s \right)} = \dfrac{1}{\delta \left( s \right)} \left( \dfrac{\dot{\beta} \left( s \right)}{\beta \left( s \right)} - \dfrac{\dot{\delta} \left( s \right)}{\delta \left( s \right)} \right) ,
\end{equation*}
and thus define
\begin{equation*}
\delta_{i} \coloneq \inf_{s \geq s_{0}} \dfrac{1}{\delta \left( s \right)} \left( \dfrac{\dot{\beta} \left( s \right)}{\beta \left( s \right)} - \dfrac{\dot{\delta} \left( s \right)}{\delta \left( s \right)} \right)
\quad \textrm{ and } \quad
\delta_{s} \coloneq \sup_{s \geq s_{0}} \dfrac{1}{\delta \left( s \right)} \left( \dfrac{\dot{\beta} \left( s \right)}{\beta \left( s \right)} - \dfrac{\dot{\delta} \left( s \right)}{\delta \left( s \right)} \right) .
\end{equation*}

\begin{theorem}
\label{thm:rescaling:rates}
Let $z \colon \left[ s_{0} , + \infty \right) \to \sH$ be the trajectory solution of the dynamical system \eqref{ds:rescaling}. 
Then, the following statements are true:
\begin{enumerate}	
\item 
as $s \to + \infty$ it holds
\begin{align}
\label{rescaling:eps:rates}
\norm{\dot{z} \left( s \right)} & = \begin{dcases}
\bO \left( \dfrac{1}{\gamma_{\delta} \left( t \right)} \right) & \textrm{ if } 1 < \delta_{i}  \textrm{ or } \delta_{i} = \delta_{s} = 1 , \\
\bO \left( \dfrac{\delta \left( s \right)}{\beta \left( s \right)} \right) 					& \textrm{ if } -1 < \delta_{i} \leq \delta_{s} < 1 ,
\end{dcases}
\\
\label{rescaling:eps:rates1}
\norm{M \left( z \left( s \right) \right)} & = \begin{dcases}
\bO \left( \dfrac{1}{\gamma_{\delta} \left( t \right)} \right) & \textrm{ if } 1 < \delta_{i} \textrm{ or } \delta_{i} = \delta_{s} = 1 , \\
\bO \left( \dfrac{\delta \left( s \right)}{\beta \left( s \right)} \right) 					& \textrm{ if } -1 < \delta_{i} \leq \delta_{s} < 1 ;
\end{dcases} 
\end{align}

\item 
the trajectory $z(s)$ converges strongly to the minimum norm solution $\xi_{*} \coloneq \proj_{\zer M} \left( 0 \right)$ as $s \to + \infty$.
\end{enumerate}
\end{theorem}

\begin{proof}
For $t_{0} \geq 0$ and
\begin{equation*}
\tau(s) = t_{0} + \int_{s_{0}}^{s} \beta \left( u \right) du  \quad \forall s \geq s_{0},
\end{equation*}
$x(\cdot) \coloneq z \left( \tau^{-1}(\cdot) \right)$ is the trajectory solution of \eqref{ds:Tikhonov} with $\varepsilon(\cdot) \coloneq \frac{\delta \left( \tau^{-1}(\cdot) \right)}{\beta \left( \tau^{-1}(\cdot) \right)}$. Since for every $t \geq t_{0}$
\begin{align}
\label{der:inverse:eps-sigma}
\dfrac{d}{dt} \left[ \dfrac{1}{\varepsilon \left( t \right)} \right] 
= \dfrac{- \dot{\varepsilon} \left( t \right)}{\varepsilon^{2} \left( t \right)} 
& = - \dfrac{1}{\beta\left( \tau^{-1}(t) \right)} \left( \left[ \dfrac{d}{ds} \left( \dfrac{\delta}{\beta} \right) \right] \left( \tau^{-1}(t) \right) \right) \frac{\beta^{2} \left( \tau^{-1}(t) \right)}{\delta^{2} \left( \tau^{-1}(t) \right)} \nonumber \\
& = \dfrac{1}{\delta \left( \tau^{-1}(t) \right)} \left( \dfrac{\dot{\beta} \left( \tau^{-1}(t) \right)}{\beta \left( \tau^{-1}(t) \right)} - \dfrac{\dot{\delta} \left( \tau^{-1}(t) \right)}{\delta \left( \tau^{-1}(t) \right)} \right) 
\end{align}
and, therefore,
\begin{equation*}
\delta_{i} 
= \inf_{s \geq s_{0}} \dfrac{1}{\delta \left( s \right)} \left( \dfrac{\dot{\beta} \left( s \right)}{\beta \left( s \right)} - \dfrac{\dot{\delta} \left( s \right)}{\delta \left( s \right)} \right) 
= \inf_{t \geq t_{0}} \dfrac{- \dot{\varepsilon} (t)}{\varepsilon^{2}(t)} 
= \varepsilon_{i} ,
\end{equation*}
the convergence rates given in \cref{thm:Tikh:eps} for \eqref{ds:Tikhonov} can be translated to \eqref{ds:rescaling}. To this end, we have only to write {the estimates in \eqref{Tikh:eps:rates} and \eqref{Tikh:eps:rates1}} for $t:=\tau(s)$ and (almost) every $s \geq s_{0}$, and to take into account that for every $s \geq s_{0}$
\begin{equation*}
\gamma_{\varepsilon} \left( \tau(s) \right) = \exp \left( \int_{t_{0}}^{\tau \left( s \right)} \varepsilon \left( r \right) dr \right) = \exp \left( \int_{s_{0}}^{s} \delta \left( u \right) du \right) = \gamma_{\delta} \left( s \right) ,
\end{equation*}
and for almost every $s \geq s_{0}$
\begin{equation*}
\beta \left( s \right) \norm{\dot{x} \left( \tau \left( s \right) \right)} = \norm{\dot{x} \left( \tau \left( s \right) \right) \dot{\tau} \left( s \right)} = \norm{\dot{z} \left( s \right)} .
\end{equation*}
\end{proof}
To illustrate the two theoretical results of this subsection, we consider the case where, for $\alpha >0$,
\begin{equation*}
\delta \left( s \right) \coloneq \dfrac{\alpha}{s} \quad \forall s \geq s_{0}.
\end{equation*}
With this choice of regularization function, we will later establish a connection between the first-order dynamical system and a second-order system with vanishing damping. For this instance, we have for every $s \geq s_{0}$
\begin{equation}
\label{beta:alpha:small:der}
\dfrac{1}{\delta \left( s \right)} \left( \dfrac{\dot{\beta} \left( s \right)}{\beta \left( s \right)} - \dfrac{\dot{\delta} \left( s \right)}{\delta \left( s \right)} \right) = \dfrac{1}{\alpha} \left( \dfrac{s \dot{\beta} \left( s \right)}{\beta \left( s \right)} + 1 \right) .
\end{equation}
This formula allows us to simplify the conditions imposed in \cref{thm:rescaling:rates}. To this end, we denote
\begin{equation*}
\beta_{i} \coloneq \inf_{s \geq s_{0}} \dfrac{s \dot{\beta} \left( s \right)}{\beta \left( s \right)}
\quad \textrm{ and } \quad
\beta_{s} \coloneq \sup_{s \geq s_{0}} \dfrac{s \dot{\beta} \left( s \right)}{\beta \left( s \right)} .
\end{equation*}

\begin{corollary}
\label{thm:beta:alpha:small}
Let $s_{0} > 0$, $\alpha > 0$, $\beta \colon \left[ s_{0}, + \infty \right) \to \sR_{++}$ a continuously differentiable function and $z \colon \left[ s_{0} , + \infty \right)$ $\to \sH$ the trajectory solution of the dynamical system
\begin{equation}
\label{ds:beta:alpha:small}
\dot{z} \left( s \right) + \beta \left( s \right) M \left( z \left( s \right) \right) + \dfrac{\alpha}{s} z \left( s \right) = 0 ,
\end{equation}
with initial condition $z \left( s_{0} \right) \coloneq z_{0} \in \sH$.
Suppose that
\begin{equation}
\label{beta:alpha:small:cond}
\lim_{s \to + \infty} \dfrac{1}{s \beta \left( s \right)} = 0
\quad \textrm{ and } \quad
\int_{s_{0}}^{+ \infty} \beta \left( s \right) ds = + \infty .
\end{equation}
Then, the following statements are true:
\begin{enumerate}	
\item 
as $s \to + \infty$ it holds
\begin{align}
\norm{\dot{z} \left( s \right)} = \begin{dcases}
\bO \left( \dfrac{1}{s^{\alpha-1}} \right) & \textrm{ if } \alpha - 2 < \beta_{i}  \textrm{ or } \beta_{i} = \beta_{s} = \alpha - 2 , \\
\bO \left( \dfrac{1}{s \beta \left( s \right)} \right) 					& \textrm{ if } -1 - \alpha < \beta_{i} \leq \beta_{s} < \alpha - 2  ,
\end{dcases}
\\
\norm{M \left( z \left( s \right) \right)} = \begin{dcases}
\bO \left( \dfrac{1}{s^{\alpha-1}} \right) & \textrm{ if } \alpha - 2  < \beta_{i}  \textrm{ or } \beta_{i} = \beta_{s} = \alpha - 2 , \\
\bO \left( \dfrac{1}{s \beta \left( s \right)} \right) 					& \textrm{ if } -1 - \alpha < \beta_{i} \leq \beta_{s} < \alpha - 2 ;
\end{dcases} 
\end{align}

\item 
the trajectory $z(s)$ converges strongly to the minimum norm solution $\xi_{*} \coloneq \proj_{\zer M} \left( 0 \right)$ as $s \to + \infty$.
\end{enumerate}
\end{corollary}

The conditions in \eqref{beta:alpha:small:cond} are fulfilled, for instance, for
\begin{equation*}
\beta(s) = s^p, \quad \mbox{ where } p > -1 .
\end{equation*}
In this case, $\frac{s \dot{\beta} \left( s \right)}{\beta \left( s \right)} = p$ for every $s \geq s_{0}$, and therefore $\beta_i = \beta_s = p$.  \Cref{thm:beta:alpha:small} provides, consequently, a 
convergence rate of order $\bO \left( \dfrac{1}{s^{\min \left\lbrace \alpha-1,p+1 \right\rbrace}} \right)$.

\subsection{First-order dynamical system with anchor point}
\label{sec:anchor}

Enhancing the dynamical systems \eqref{ds:rescaling} with an anchor point represents the final step in our attempt to establish a connection between first-order dynamical systems with Tikhonov regularization and second-order dynamical systems with a vanishing damping term.

\begin{mdframed}
Let $s_{0} \geq 0$ and $\beta, \delta \colon \left[ s_{0} , + \infty \right) \to \sR_{++}$ be given functions.
We consider on $\left[ s_{0} , + \infty \right)$ the following dynamical system
\begin{equation}
	\label{ds:anchor}
\dot{y} \left( s \right) + \beta \left( s \right) M \left( y \left( s \right) \right) + \delta \left( s \right) (y \left( s \right) -v) = 0 ,
\end{equation}
with initial condition $y \left( s_{0} \right) \coloneq y_{0} \in \sH$ and anchor point $v \in \sH$.
\end{mdframed}

The convergence and convergence rate results derived for \eqref{ds:rescaling} can be transferred to \eqref{ds:anchor}. In particular, the rates of convergence are not affected by the presence of the anchor point. The only difference is the limit point to which the trajectory converges strongly, now being the closest point to $v$ in the solution set $\zer M$.

Define $z \colon \left[ s_{0} , + \infty \right) \to \sH$ as
\begin{equation}
\label{Halpern:x=y-v}
z \left( s \right) \coloneq y \left( s \right) - v \quad \forall s \geq s_{0} .
\end{equation}
The main idea relies on the observation that $y(\cdot)$ is the trajectory solution of \eqref{ds:anchor} if and only if $z(\cdot)$ is the trajectory solution of
\begin{equation}
\label{Halpern:ds}
\dot{z} \left( s \right) + \beta \left( s \right) M^{v} \left( z \left( s \right) \right) + \delta \left( s \right) z \left( s \right) = 0 ,
\end{equation}
where $M^{v} \colon \sH \to \sH$ is the monotone operator defined as
\begin{equation}
\label{Halpern:Au}
M^{v} \left( z \right) = M \left( z + v \right) \quad \forall z \in \sH .
\end{equation}

The rates of convergence for the velocity and the operator norm follow from Theorem \ref{thm:rescaling:rates}, since $\dot{z} \left( s \right) = \dot{y} \left( s \right)$ and $M^{v} \left( z \left( s \right) \right) = M \left( y \left( s \right) \right)$ for every $s \geq s_{0}$.  If $\beta$ and $\delta$, in addition to \eqref{cond:beta-delta}, satisfy either {\eqref{rescaling:BV} or \eqref{rescaling:lim}}, then the trajectory $y(s)$ converges strongly to  $\proj_{\zer M} \left( v \right)$. Indeed, according to Theorem \ref{thm:rescaling}, $z(s) = y(s)-v$ converges strongly to $\proj_{\zer M^{v}} \left( 0 \right) = \proj_{\zer M} \left( v \right) - v$ (see \cite[Proposition 29.1]{Bauschke-Combettes:book}) as $s \rightarrow +\infty$. In other words, $y(s)$ converges strongly to $ \proj_{\zer M} \left( v \right)$ as $s \rightarrow +\infty$.

\begin{remark}\label{paper:ryu}
The dynamic \eqref{ds:anchor} was also studied by Suh, Park, and Ryu in \cite{Suh-Park-Ryu} in finite-dimensional spaces, in the case $\beta(s) \coloneq1$ and $\varepsilon \left( s \right) \coloneq \frac{\alpha}{s^{q}}$ for every $s \geq s_{0} >0$, where $\alpha > 0$ and $0 < q \leq 1$. By relying on a very intricate Lyapunov analysis, the convergence of the trajectory and convergence rate for the operator norm has been provided. These results are particular cases of our more general approach, see Theorem \ref{thm:Tikh:0<q<1} and  Theorem \ref{thm:Tikh:q=1},  which is fundamentally different, as it relies on the study of the two very simple functions $\varphi$ and $\psi$, defined as in \eqref{defi:phi} and \eqref{defi:g}, respectively. In addition, our approach provides a rate of convergence for the velocity. The dynamics \eqref{ds:anchor} is referred to as the continuous model of the Halpern method. In light of the above considerations, it is evident that the Halpern method is nothing else than a particular instance of the classical Tikhonov regularization approach.
\end{remark}

\section{\!\!From first-order dynamics with Tikhonov regularization to second-order dynamics with vanishing damping term}\label{sec3}

In this section, we will demonstrate that, when applied to monotone equations of the type \eqref{intro:mono}, first-order dynamics with a specific Tikhonov regularization term are intimately related to first-order dynamics with a vanishing damping term. The latter have been proved in \cite{Bot-Csetnek-Nguyen} to accelerate the convergence rate of the operator norm with respect to the first-order monotone operator flow, while preserving the weak convergence of the trajectory to a zero of the operator.

\begin{mdframed}
Let $s_{0} >0$, $\alpha > 1$ and $\beta : \left[ s_{0} , + \infty \right) \rightarrow \sR_{++}$ a continuously differentiable function. We consider on $\left[ s_{0} , + \infty \right)$ the dynamical system
\begin{equation}	
\label{ds:second-order}
\ddot{z} \left( s \right) + \dfrac{\alpha}{s} \dot{z} \left( s \right) + \beta \left( s \right) \dfrac{d}{ds} M \left( z \left( s \right) \right) + \left( \dot{\beta} \left( s \right) + \dfrac{\beta \left( s \right)}{s} \right) M \left( z \left( s \right) \right) = 0,
\end{equation}
with initial conditions $z \left( s_{0} \right) = z_{0} \in \sH$  and $\dot{z} \left( s_{0} \right) = \dot{z}_{0} \in \sH$.
\end{mdframed}

We assume that it admits a unique strong global solution $z : \left[ s_{0} , + \infty \right) \rightarrow \sH$, meaning that $z$ and ${\dot{z}}$ are absolutely continuous on every compact interval $[s_{0},S]$ and that $z$ satisfies \eqref{ds:second-order} almost everywhere, with the property that $s\mapsto M(z(s))$ is absolutely continuous on every compact interval $[s_{0},S]$. 

\begin{proposition}\label{prop:connection}
Let $s_{0} > 0$ and $\alpha > 1$. Then $z \colon \left[ s_{0} , + \infty \right) \to \sH$ is the trajectory solution of \eqref{ds:second-order} with the initial conditions $z \left( s_{0} \right) = z_{0} \textrm{ and } \dot{z} \left( s_{0} \right) = \dot{z}_{0}$ if and only if it is the trajectory solution of
\begin{equation}
\label{ds:first-order}
\dot{z} \left( s \right) + \beta \left( s \right) M \left( z \left( s \right) \right) + \dfrac{\alpha - 1}{s} \left( z \left( s \right) - v \right) = 0,
\end{equation}
with initial condition $z \left( s_{0} \right) = z_{0}$ and anchor point
$$v \coloneq z_0 + \dfrac{s_{0}}{\alpha-1}\left(\dot{z}_{0} + \beta \left( s_{0} \right) M \left( z_{0} \right) \right).$$
\end{proposition}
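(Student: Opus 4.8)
The statement is an equivalence between a second-order dynamics \eqref{ds:second-order} and a first-order dynamics with anchor \eqref{ds:first-order}. The natural strategy is to differentiate \eqref{ds:first-order} once in time and show it reproduces \eqref{ds:second-order}, and conversely to integrate \eqref{ds:second-order} once and recognize \eqref{ds:first-order}; the anchor point $v$ is then pinned down by matching the initial conditions. I would set $\delta(s):=\frac{\alpha-1}{s}$, so that $\dot\delta(s)=-\frac{\alpha-1}{s^2}=-\frac{\delta(s)}{s}$, a relation that will be used repeatedly.

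\textbf{From \eqref{ds:first-order} to \eqref{ds:second-order}.} Suppose $z(\cdot)$ solves \eqref{ds:first-order}. Differentiating both sides with respect to $s$ gives
\[
\ddot z(s) + \dot\beta(s) M(z(s)) + \beta(s)\tfrac{d}{ds}M(z(s)) + \dot\delta(s)\bigl(z(s)-v\bigr) + \delta(s)\dot z(s) = 0.
\]
From \eqref{ds:first-order} itself one has $\delta(s)(z(s)-v) = -\dot z(s) - \beta(s)M(z(s))$, hence $\dot\delta(s)(z(s)-v) = \frac{\dot\delta(s)}{\delta(s)}\bigl(-\dot z(s)-\beta(s)M(z(s))\bigr) = -\frac1s\bigl(-\dot z(s)-\beta(s)M(z(s))\bigr)$. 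Substituting this in and collecting terms, the coefficient of $\dot z(s)$ becomes $\delta(s)+\frac1s = \frac{\alpha-1}{s}+\frac1s = \frac{\alpha}{s}$, and the coefficient of $M(z(s))$ becomes $\dot\beta(s)+\frac{\beta(s)}{s}$, so we recover \eqref{ds:second-order} exactly. The initial condition $z(s_0)=z_0$ is immediate, and evaluating \eqref{ds:first-order} at $s=s_0$ gives $\dot z(s_0) = -\beta(s_0)M(z_0) - \frac{\alpha-1}{s_0}(z_0-v)$, which rearranges precisely to $v = z_0 + \frac{s_0}{\alpha-1}\bigl(\dot z_0 + \beta(s_0)M(z_0)\bigr)$ once we set $\dot z(s_0)=\dot z_0$.

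\textbf{From \eqref{ds:second-order} to \eqref{ds:first-order}.} Conversely, suppose $z(\cdot)$ solves \eqref{ds:second-order}. Define $w(s) := \dot z(s) + \beta(s)M(z(s)) + \frac{\alpha-1}{s}\bigl(z(s)-v\bigr)$ with $v$ as in the statement; the goal is to show $w\equiv 0$. A direct computation of $\dot w(s)$, using that $\frac{d}{ds}\bigl(\frac{\alpha-1}{s}(z(s)-v)\bigr) = -\frac{1}{s}\cdot\frac{\alpha-1}{s}(z(s)-v) + \frac{\alpha-1}{s}\dot z(s)$ and invoking \eqref{ds:second-order} to replace $\ddot z(s)$, shows after the same bookkeeping as above that $\dot w(s) = -\frac{1}{s}\,w(s)$ for a.e. $s\ge s_0$. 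Since $w$ is absolutely continuous on compact intervals, this linear ODE together with the initial value $w(s_0) = \dot z_0 + \beta(s_0)M(z_0) + \frac{\alpha-1}{s_0}(z_0-v) = 0$ — which holds by the very definition of $v$ — forces $w(s) = w(s_0)\exp\bigl(-\int_{s_0}^s \frac{du}{u}\bigr) \equiv 0$ for all $s\ge s_0$. Hence $z(\cdot)$ satisfies \eqref{ds:first-order}.

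\textbf{Main obstacle.} The only delicate point is regularity: the manipulations in both directions involve differentiating $s\mapsto M(z(s))$ (going forward) and the absolute continuity of $w$ (going backward), which is exactly why the problem hypotheses stipulate that $s\mapsto M(z(s))$ is absolutely continuous on compact intervals for both dynamics. Granting this, the argument is an elementary integrating-factor computation; the substitution $\dot\delta(s) = -\delta(s)/s$ is what makes the damping coefficients $\frac{\alpha}{s}$ and the operator coefficient $\dot\beta(s)+\frac{\beta(s)}{s}$ fall out cleanly, and matching at $s=s_0$ is what determines $v$ uniquely.
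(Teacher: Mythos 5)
Your proposal is correct and follows essentially the same route as the paper: one direction differentiates the first-order equation (the paper first multiplies by $s$, you instead back-substitute using $\dot\delta(s)=-\delta(s)/s$, which is the same bookkeeping), and the other direction amounts to observing that $s\bigl(\dot z(s)+\beta(s)M(z(s))+\tfrac{\alpha-1}{s}(z(s)-v)\bigr)$ is constant — the paper gets this by integrating $\tfrac{d}{ds}\bigl(s\dot z(s)+s\beta(s)M(z(s))\bigr)=(1-\alpha)\dot z(s)$, while your residual $w$ with $\dot w=-w/s$, $w(s_0)=0$ is just the integrating-factor repackaging of that identity. The regularity point you flag (absolute continuity of $s\mapsto M(z(s))$, hence of $w$) is exactly what the standing assumptions provide, so the argument is complete.
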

\begin{proof}
\item[\underline{``\emph{second-order}'' $\Mapsto$ ``\emph{first-order}''.}]
Let $z \colon \left[ s_{0} , + \infty \right) \to \sH$ be the trajectory solution of \eqref{ds:second-order} with initial conditions $z \left( s_{0} \right) = z_{0} \textrm{ and } \dot{z} \left( s_{0} \right) = \dot{z}_{0}$.
For almost every $s \geq s_{0}$ it holds
\begin{align*}
\dfrac{d}{ds} \left( s \dot{z} \left( s \right) + s \beta \left( s \right) M \left( z \left( s \right) \right) \right) 
= s \ddot{z} \left( s \right) + \dot{z} \left( s \right) + s \beta \left( s \right) \dfrac{d}{ds} M \left( z \left( s \right) \right) + \left( s \dot{\beta} \left( s \right) + \beta \left( s \right) \right) M \left( z \left( s \right) \right) = \left( 1 - \alpha \right) \dot{z} \left( s \right) .
\end{align*}
By integration from $s_{0}$ to $s$, we get for almost every  $s \geq s_{0}$
\begin{equation*}
s \dot{z} \left( s \right) - s_{0} \dot{z}_{0} + s \beta \left( s \right) M \left( z \left( s \right) \right) - s_{0} \beta \left( s_{0} \right) M \left( z_{0} \right) = \left( 1 - \alpha \right) \left( z \left( s \right) - z_{0} \right)
\end{equation*}
or, equivalently,
\begin{equation*}
\label{ds:}
\dot{z} \left( s \right) + \beta \left( s \right) M \left( z \left( s \right) \right) + \dfrac{\alpha - 1}{s} \left( z \left( s \right) - v \right) = 0 ,
\end{equation*}
where $v \coloneq z_{0} + \frac{s_{0}}{\alpha - 1} \left(\dot{z}_{0} + \beta \left( s_{0} \right) M \left( z_{0} \right) \right)$.

\item[\underline{``\emph{first-order}'' $\Mapsto$ ``\emph{second-order}''.}]
Starting with the trajectory solution $z \colon \left[ s_{0} , + \infty \right) \to \sH$ of the first-order system \eqref{ds:first-order} with initial condition $z \left( s_{0} \right) \coloneq z_{0} \in \sH$ and anchor point $v \coloneq z_{0} + \frac{s_{0}}{\alpha - 1} \left(\dot{z}_{0} + \beta \left( s_{0} \right) M \left( z_{0} \right) \right)$, it satisfies for almost every $s \geq s_{0}$
\begin{equation*}
s \dot{z} \left( s \right) + s \beta \left( s \right) M \left( z \left( s \right) \right) + \left( \alpha - 1 \right) \left( z \left( s \right) - v \right) = 0 .
\end{equation*}
In addition, $\dot{z} \left( s_{0} \right) = \dot{z}_{0}$. After differentiation, it yields for almost every $s \geq s_{0}$
\begin{equation*}
s \ddot{z} \left( s \right) + \dot{z} \left( s \right) + s \beta \left( s \right) \dfrac{d}{ds} M \left( z \left( s \right) \right) + \left( s \dot{\beta} \left( s \right) + \beta \left( s \right) \right) M \left( z \left( s \right) \right) + \left( \alpha - 1 \right) \dot{z} \left( s \right) = 0 ,
\end{equation*}
which is nothing else than \eqref{ds:second-order}.
\end{proof}

Theorem \ref{thm:beta:alpha:small} and the considerations in Subsection \ref{sec:anchor} lead to the following result, which we present using \emph{Big}-$\bO$ notation to provide the readers with a clearer overview and facilitate comparison with other works in the literature.
\begin{theorem}
\label{thm:second-order}
Let $s_{0} > 0$, $\alpha > 1$, $\beta \colon \left[ s_{0} , + \infty \right) \to \sR_{++}$ a continuously differentiable function and $z \colon \left[ s_{0} , + \infty \right)$ $\to \sH$ the trajectory solution of the dynamical system \eqref{ds:second-order} with initial conditions $z ( s_{0}) = z_{0}$ and $\dot{z} \left( s_{0} \right) = \dot{z_{0}}$. 
Then, the following statements are true:
\begin{enumerate}
\item if 
\begin{equation*}
\alpha - 2 < \inf_{s \geq s_{0}} \dfrac{s \dot{\beta} \left( s \right)}{\beta \left( s \right)},
\end{equation*}
then
\begin{align*}
\norm{\dot{z} \left( s \right)} = \bO \left(\dfrac{\beta \left( s \right)}{s^{\alpha}} \right) \quad \mbox{and} \quad \norm{M \left( z \left( s \right) \right)} = \bO \left(\dfrac{1}{s^{\alpha}}\right) \quad \mbox{as} \ s \rightarrow + \infty;
\end{align*}

\item if 
\begin{equation*}
-1 < \inf_{s \geq s_{0}} \dfrac{s \dot{\beta} \left( s \right)}{\beta \left( s \right)} \leq \sup_{s \geq s_{0}} \dfrac{s \dot{\beta} \left( s \right)}{\beta \left( s \right)} < \alpha - 2 
\end{equation*}
or
\begin{equation*}
\dfrac{s \dot{\beta} \left( s \right)}{\beta \left( s \right)} = \alpha -2 \quad \forall s \geq s_{0},
\end{equation*}
then
\begin{align*}
\norm{\dot{z} \left( s \right)} = \bO \left(\dfrac{1}{s} \right) \quad \mbox{and} \quad \norm{M \left( z \left( s \right) \right)} = \bO \left(\dfrac{1}{s \beta \left( s \right)}\right) \quad \mbox{as} \ s \rightarrow + \infty.
\end{align*}

\item 
the trajectory $z(s)$ converges strongly to $\proj_{\zer M} \left( z_{0} + \frac{s_{0}}{\alpha - 1} \left(\dot{z}_{0} + \beta \left( s_{0} \right) M \left( z_{0} \right) \right)\right)$ as $s \to + \infty$.
\end{enumerate}
\end{theorem}

\begin{remark}\label{rem33}
In \cite{Bot-Csetnek-Nguyen},  for the Fast Optimistic Gradient Descent Ascent (OGDA) dynamics
\begin{equation}	
\label{ds:fast-OGDA}
\ddot{z} \left( s \right) + \dfrac{\alpha}{s} \dot{z} \left( s \right) + \beta \left( s \right) \dfrac{d}{ds} M \left( z \left( s \right) \right) + \dfrac{1}{2} \left( \dot{\beta} \left( s \right) + \dfrac{\alpha \beta \left( s \right)}{s} \right) M \left( z \left( s \right) \right) = 0
\end{equation}
with initial time $s_{0} >0$, $\alpha \geq 2$, $\beta \colon \left[ s_{0} , + \infty \right) \to \sR_{++}$ a continuously differentiable and nondecreasing function,  and initial conditions $z ( s_{0}) = z_{0}$ and $\dot{z} \left( s_{0} \right) = \dot{z_{0}}$, it has been shown that if
\begin{equation*}
\sup_{s \geq s_{0}} \dfrac{s \dot{\beta} \left( s \right)}{\beta \left( s \right)} \leq \alpha - 2,
\end{equation*}
then 
$$\norm{\dot{z} \left( s \right)} = \bO \left(\dfrac{1}{s} \right) \quad \mbox{and} \quad \norm{M \left( z \left( s \right) \right)} = \bO \left(\dfrac{1}{s \beta \left( s \right)}\right) \quad \mbox{as} \ s \rightarrow +\infty,$$
without providing a statement regarding the convergence of the trajectory.  On the other hand, if
\begin{equation*}
\sup_{s \geq s_{0}} \dfrac{s \dot{\beta} \left( s \right)}{\beta \left( s \right)} < \alpha - 2,
\end{equation*}
then
$$\norm{\dot{z} \left( s \right)} = o \left(\dfrac{1}{s} \right) \quad \mbox{and} \quad \norm{M \left( z \left( s \right) \right)} = o \left(\dfrac{1}{s \beta \left( s \right)}\right) \quad \mbox{as} \ s \rightarrow +\infty,$$
and $z(s)$ converges weakly to a zero of $M$ as $s \rightarrow +\infty$.  If for every $s \geq s_{0}$ it holds
\begin{equation*}
s \dot{\beta} \left( s \right) = \left( \alpha - 2 \right) \beta \left( s \right) ,
\end{equation*}
that is, $\beta \left( s \right) = \beta_{0} s^{\alpha - 2}$ for some $\beta_{0} > 0$, then the dynamical system \eqref{ds:fast-OGDA} coincides with \eqref{ds:second-order}. According to Theorem \ref{thm:second-order}, besides the rates of convergence for the velocity and the operator norm, we obtain in this borderline case that $z(s)$ converges strongly to $\proj_{\zer M} \left( z_{0} + \frac{s_{0}}{\alpha - 1} \left(\dot{z}_{0} + \beta \left( s_{0} \right) M \left( z_{0} \right) \right)\right)$ as $s \to + \infty$.  This is particularly true when $\beta$ is constant and $\alpha=2$, which is of significant interest when deriving discrete explicit counterparts of \eqref{ds:fast-OGDA}.
\end{remark}

\section{Fixed point problem and discretization}\label{sec4}

In this section, we will introduce and analyze discrete time counterparts of the first-order dynamical system  \eqref{ds:Tikhonov} in the context of identifying the fixed points of nonexpansive operators.

\subsection{Problem formulation and overview of existing numerical methods}\label{sec41}

For  $T \colon \sH \to \sH$ a \emph{nonexpansive ($1$-Lipschitz continuous)} operator, i.e.,
$$\|T(x) - T(y)\| \leq \|x-y\| \quad \forall x,y \in \sH,$$
we consider the fixed point problem
\begin{mdframed}
\begin{equation}
\label{intro:pb:fix}
\textrm{ Find } x \in \sH \textrm{ such that } x = T \left( x \right) .
\end{equation}
\end{mdframed}

We denote the set of all \emph{fixed points} of $T$ by $\Fix T \coloneq \left\lbrace x \in \sH \colon x = T \left( x \right) \right\rbrace$ and assume throughout this section  that this set is nonempty. The set $\Fix T$ is convex and closed (see, for instance, \cite[Corollary 4.24]{Bauschke-Combettes:book}).

There are many motivations for studying this particular class of problems. It has long been known that various monotone inclusions and convex optimization problems can be represented by fixed point formulations using nonexpansive operators.  For a more detailed discussion of this connection,  the reader is referred to  \cite{Bauschke-Combettes:book, Dong-Cho-He-Pardalos-Rassias}.

To establish the connection between \eqref{intro:mono} and \eqref{intro:pb:fix}, we observe that $\Fix T = \zer (\Id-T)$ and, since $T$ is nonexpansive, $\Id-T$ satisfies (see \cite[Proposition 4.11]{Bauschke-Combettes:book}) the inequality
\begin{equation}
	\label{pre:coco}
	\left\langle x - y , \left( \Id - T \right) \left( x \right) - \left( \Id - T \right) \left( y \right) \right\rangle \geq \dfrac{1}{2} \left\lVert \left( \Id - T \right) \left( x \right) - \left( \Id - T \right) \left( y \right) \right\rVert ^{2} \geq 0 \quad \forall x , y \in \sH.
\end{equation}
This shows that $\Id-T$ is a continuous and monotone operator.  In other words, the fixed point problem \eqref{intro:pb:fix} can be equivalently formulated as a monotone equation of type \eqref{intro:mono}.

These premises allow to approach \eqref{intro:pb:fix} by the dynamical system \eqref{ds:q=1} for $M\coloneq\Id-T$. This gives a convergence rate of $\mathcal{O}\left(\frac{1}{t}\right)$ for $\left\lVert x(t) - T\left(x(t)\right) \right\rVert$ and ensures the strong convergence of $x(t)$ to $\proj_{\Fix T}\left(0\right)$ as $t \to +\infty$ (see also \cite{Bot-Grad-Meier-Staudigl}).This is a significant improvement over both convergence statements for the dynamical system
\begin{equation}\label{ds:fix}
\dot x(t) + (\Id-T)(x(t))=0
\end{equation}
with the initial condition $x(t_{0}) = x_0$, as provided in \cref{thm:cocoercive} (see, also, \cite[Theorems 11 and 16]{Bot-Csetnek}).

The most standard approach used to detect a fixed point of $T$ is the \emph{\BP iteration},  which reads for every $k \geq 0$
\begin{equation}
	\label{algo:BP}
	x_{k+1} \coloneq T \left( x_{k} \right) ,
\end{equation}
where $x_{0} \in \sH$ is a given starting point.  According to the \BP fixed point theorem, if $T$ is a \emph{contraction}, namely, $T$ is Lipschitz continuous with modulus $\kappa \in \left[ 0 , 1 \right)$, then the sequence $\seq{x_{k}}_{k \geq 0}$ generated by \eqref{algo:BP} converges strongly to the unique fixed point of $T$ with a linear convergence rate.   On the other hand, if $T$ is merely nonexpansive, then \BP might diverge. To see this, it is enough to choose $T = - \Id$ and $x_{0} \neq 0$.  A closer look at this example reveals that the \BP iteration not only fails to approach a fixed point of $T$,  but also generates a sequence that does not satisfy the \emph{asymptotic regularity property}.  We say that the sequence $\seq{x_{k}}_{k \geq 0}$ satisfies the asymptotic regularity property if the difference $x_{k} - T \left( x_{k} \right) \to 0$ as $k \to + \infty$. This property is crucial for guaranteeing the convergence of the iterates, as we will see later.

To overcome the restrictive contraction assumption on $T$, Krasnosel’ski\u{\i} proposed in \cite{Krasnoselskii} applying the \BP iteration \eqref{algo:BP} to the operator $\frac{1}{2} \Id + \frac{1}{2} T$ instead of $T$.  Building on this idea,  by iteratively considering a convex combination between $\Id$ and $T$,  the so-called \emph{\KM iteration} emerges. Specifically,  let $\seq{\alpha_{k}}_{k \geq 0}$ be a sequence in $\left[ 0 , 1 \right]$. The \KM iteration is defined for every $k \geq 0$ as
\begin{equation}
	\label{algo:KM}
	x_{k+1} \coloneq \alpha_{k} x_{k} + \left( 1 - \alpha_{k} \right) T \left( x_{k} \right) ,
\end{equation}
where, again, $x_{0} \in \sH$ is a given starting point.

A fundamental step in proving the convergence of the iterates of \eqref{algo:KM} is to demonstrate that $x_{k} - T \left( x_{k} \right) \to 0$ as $k \to + \infty$. This was first established by Browder and Petryshyn in \cite{Browder-Petryshyn} in the constant case $\alpha_{k} \equiv \alpha_{0} \in \left( 0 , 1 \right)$.  The extension to nonconstant sequences was achieved by Groetsch in \cite{Groetsch}, who proved that if $\sum_{k \geq 0} \alpha_{k} \left( 1 - \alpha_{k} \right) = + \infty$, then the asymptotic regularity property is satisfied. The weak convergence of the iterates has been studied in various settings in \cite{Bauschke-Combettes:book, Borwein-Reich-Shafrir, Groetsch,Ishikawa,Reich}. Techniques based on Tikhonov regularization to improve the convergence of the iterates from weak to strong have been recently explored in  \cite{Bot-Csetnek-Meier,Bot-Meier}.

It must be emphasized that the asymptotic regularity property does not automatically provide an explicit convergence rate.  Indeed, deriving a precise rate of convergence is much more complex, and research on this subject is still limited compared to the extensive results on the convergence of iterates.
The convergence of the \KM iteration,  expressed in terms of the fixed point residual $\norm{x_{k} - T \left( x_{k} \right)}$, was proved to be $o \left(\frac{1}{\sqrt{k}} \right)$ by Baillon and Bruck in \cite{Baillon-Bruck} for the case of a constant sequence $\seq{\alpha_{k}}_{k \geq 0}$.  For a nonconstant sequence, it was initially shown to be  $\bO \left(\frac{1}{\sqrt{k}} \right)$ in \cite{Cominetti-Soto-Vaisman,Liang-Fadili-Peyre}, and later on improved to $o \left(\frac{1}{\sqrt{k}}\right)$ in \cite{Davis-Yin:2016,Matsushita,Bravo-Cominetti}.  These results align with the statements in the continuous setting by Bo\c{t} and Csetnek (\cite{Bot-Csetnek}) mentioned earlier.  Contreras and Cominetti demonstrated in \cite{Contreras-Cominetti} that in the Banach space setting the lower bound of the  \KM iteration is $\bO \left(\frac{1}{\sqrt{k}} \right)$, whereas Maul\'en, Fierro, and Peypouquet proved in \cite{Maulen-Fierro-Peypouquet} that the rate of convergence of the fixed point residual of a general \emph{inertial \KM algorithm} is $o \left(\frac{1}{\sqrt{k}}\right)$.

In the following, we shall see that both \BP and \KM iterations are obtained by temporal discretizations of the same dynamical system \eqref{ds:fix}. 
The difference lies in the chosen step size. Specifically, we consider an explicit finite-difference scheme with step sizes $h_{k} > 0$
\begin{equation*}
	\dfrac{x_{k+1} - x_{k}}{h_{k}} + \left( \Id - T \right) \left( x_{k} \right) = 0 \quad \forall k \geq 0,
\end{equation*}
which is equivalent to
\begin{align}\label{algo:forward} 
	x_{k+1} = x_{k} - h_{k} \left( \Id - T \right) \left( x_{k} \right) = \left( 1 - h_{k} \right) x_{k} + h_{k} T \left( x_{k} \right) \quad \forall k \geq 0.
\end{align}
From this,  it is clear that \BP process is simply the scheme above with $h_{k} \equiv 1$, while taking $h_{k} = 1 - \alpha_{k}$ leads to the \KM iteration.

By considering convex combinations with a fixed anchor point $v \in \sH$, one obtains the \emph{Halpern iteration} \cite{Halpern}
\begin{equation}
	\label{algo:H}
	x_{k+1} \coloneq \alpha_{k} v + \left( 1 - \alpha_{k} \right) T \left( x_{k} \right) \quad \forall k \geq 0.
\end{equation}
The asymptotic regularity property of this iterative scheme has been studied in \cite{Wittmann,Xu:02}.  This method has recently been gaining increasing attention \cite{Lieder,Qi-Xu,Park-Ryu,Suh-Park-Ryu,He-Xu-Dong-Mei}. One reason for its attractiveness, besides guaranteeing strong convergence of the iteration, is its acceleration property, which has been demonstrated recently. Sabach and Shtern proved in \cite{Sabach-Shtern} for a general form of the Halpern iteration that the rate of convergence of the fixed point residual is of $\bO \left( \frac{1}{k} \right)$. Lieder obtained in \cite{Lieder} the same rate of convrgence  for the Halpern iteration \eqref{algo:H} with $\alpha_{k} \coloneq \frac{1}{k+2}$ for every $k \geq 0$,  while Park and Ryu showed in \cite{Park-Ryu} that the $\bO \left( \frac{1}{k} \right)$ convergence rate is optimal within a specific family of fixed point iterations.

This rate of convergence is particularly attractive when compared to the closely related \KM method.  Building on these insights, researchers have started combining this anchoring technique with other methods to accelerate convergence.  Notable contributions in this direction include works by Yoon and Ryu \cite{Yoon-Ryu:21},  Lee and Kim \cite{Lee-Kim},  and Tran-Dinh \cite{Tran-Dinh} and his coauthor Luo \cite{Tran-Dinh-Luo}.

Recently,  in \cite{Bot-Nguyen} (see also \cite{Bot-Csetnek-Nguyen}), explicit numerical methods with Nesterov momentum and correcting operator terms have been introduced.  These methods exhibit a convergence rate for the fixed point residual of $\bO \left( \frac{1}{k} \right)$, while the sequence of iterates weakly converges to a fixed point of the operators. These numerical methods were derived through a subtle discretization of the Fast Optimistic Gradient Descent Ascent (OGDA) dynamics \eqref{ds:fast-OGDA:intro}.

The main aim of this section is to show that the Halpern iteration serves as the discrete counterpart to the first-order dynamical system \eqref{ds:Tikhonov} with an anchor point $v \in \sH$ (see \eqref{ds:anchor}).  Additionally,  we will derive convergence rates for the vanishing of both the discrete velocity and the fixed point residual for various choices of the sequence $\{\alpha_k\}_{k \geq 0}$, and establish the strong convergence of the iterates to  $\proj_{\Fix T} \left( v \right)$.  Our approach will utilize novel proof techniques that are inspired by those used in the continuous time analysis.

\subsection{Convergence results of the discretization}\label{sec:algo}

To construct a numerical method,  we consider the dynamics given by \eqref{ds:Tikhonov} where an anchor point $v \in \sH$ is also added (or the dynamics \eqref{ds:anchor} with $\beta(\cdot) \equiv 1$)
\begin{equation*}
	\dot{x} \left( t \right) + \left( \Id - T \right) \left( x \left( t \right) \right) + \varepsilon \left( t \right) \left( x \left( t \right) - v \right) = 0,
\end{equation*}
with initial condition $x(t_{0}) = x_0 \in \sH$.

The explicit finite-difference scheme with step sizes $1$ gives
\begin{equation*}
x_{k+1} - x_{k} + \left( \Id - T \right) \left( x_{k} \right) + \varepsilon_{k} \left( x_{k+1} - v \right) = 0 \quad \forall k \geq 0
\end{equation*}
or, equivalently,
\begin{equation*}
x_{k+1} \coloneq \dfrac{\varepsilon_{k}}{1 + \varepsilon_{k}} v + \dfrac{1}{1 + \varepsilon_{k}} T \left( x_{k} \right) \quad \forall k \geq 0.
\end{equation*}
Here $\seq{\varepsilon_{k}}_{k \geq 0}$ is a positive, so-called \emph{regularization sequence} which mimics the properties \eqref{cond:eps} of the regularization function $\varepsilon \left( t \right)$ in the continuous case, and is therefore required to fulfill
\begin{equation}
\label{cond:var-k}
\lim_{k \to + \infty} \varepsilon_{k} = 0 \textrm{ and } \sum_{k \geq 0} \varepsilon_{k} = + \infty .
\end{equation}

For convenience,  we write for every $k \geq 0$
\begin{equation*}
\alpha_{k} \coloneq \dfrac{\varepsilon_{k}}{1 + \varepsilon_{k}} \in \left( 0 , 1 \right) \Leftrightarrow 1 - \alpha_{k} = \dfrac{1}{1 + \varepsilon_{k}} .
\end{equation*}
Then \eqref{cond:var-k} translate into the following condition.
\begin{mdframed}
The sequence $\seq{\alpha_{k}}_{k \geq 0} \subseteq (0,1)$ satisfies
\begin{equation}
	\label{cond:alpha-k}
	\lim_{k \to + \infty} \alpha_{k} = 0 \textrm{ and } \sum_{k \geq 0} \alpha_{k} = + \infty .
\end{equation}
\end{mdframed}

With this new regularization sequence, the update rule turns out to be nothing else but the Halpern fixed point iteration (\cite{Halpern}).
\begin{mdframed}
Let $v, x_{0} \in \sH$. For every $k \geq 0$ set
\begin{equation}
\label{algo:Halpern}
x_{k+1} \coloneq \alpha_{k} v + \left( 1 - \alpha_{k} \right) T \left( x_{k} \right) .
\end{equation}
\end{mdframed}

Next, we provide some preliminary estimates that will useful in the convergence analysis of \eqref{algo:Halpern}.

\begin{lemma}
\label{lem:numT}
Let $\seq{x_{k}} _{k \geq 0}$ be the sequences generated by the iterative scheme \eqref{algo:Halpern} and $\xi \in \Fix T$.  Then, the following inequalities are satisfied for every $k \geq 0$:
\begin{align}
\norm{x_{k} - \xi} & \leq D_{0} \left( x_{0} , v , \xi \right) , \label{numT:bound:ite} \\
\norm{T \left( x_{k} \right) - v} & \leq 2D_{0} \left( x_{0} , v , \xi \right) , \label{numT:bound:oper} \\
\dfrac{1}{2} \norm{x_{k+1} - \xi}^{2} & \leq \dfrac{1}{2} \left( 1 - \alpha_{k} \right) \norm{x_{k} -  \xi}^{2} + \alpha_{k} \scal{x_{k} - \xi,  v - \xi} \nonumber \\
& \quad + \alpha_{k} D_{0} ( x_{0} , v , \xi) \left(\frac{3}{2} D_{0} \left( x_{0} , v , \xi \right) \alpha_{k} + \norm{x_{k} - T \left( x_{k} \right)} \right) \label{numT:inq} .
\end{align}
\end{lemma}

\begin{proof}
We use an induction argument to prove the first inequality.  Obviously \eqref{numT:bound:ite} holds when $k = 0$.  Suppose that \eqref{numT:bound:ite} holds for $k = k_{0} \geq 0$.  It follows from \eqref{algo:Halpern} and the triangle inequality that
\begin{align*}
\norm{x_{k_{0}+1} - \xi} & = \norm{\alpha_{k_{0}} v + \left( 1 - \alpha_{k_{0}} \right) T \left( x_{k_{0}} \right) - \xi} \nonumber \\
& \leq \alpha_{k_{0}} \norm{v - \xi} + \left( 1 - \alpha_{k_{0}} \right) \norm{T \left( x_{k_{0}} \right) - \xi} \nonumber \\
& = \alpha_{k_{0}} \norm{v - \xi} + \left( 1 - \alpha_{k_{0}} \right) \norm{T \left( x_{k_{0}} \right) - T \left(\xi \right)} \nonumber \\
& \leq \alpha_{k_{0}}  \norm{v - \xi} + \left( 1 - \alpha_{k_{0}} \right) \norm{ x_{k_{0}} - \xi} \leq \max \seq{\norm{x_{0} - \xi} , \norm{v - \xi}} = D_{0} \left( x_{0} , v , \xi \right).
\end{align*}
This means that \eqref{numT:bound:ite} is satisfied for $k=k_{0}+1$,  so it is satisfied for every $k \geq 0$. The inequality \eqref{numT:bound:oper} follows immediately from this,  since for every $k \geq 0$ we have
\begin{align*}
\norm{T \left( x_{k} \right) - v} & \leq \norm{T \left( x_{k} \right) - T \left(\xi\right)} + \norm{T \left(\xi \right) - v} \nonumber \\
& \leq \norm{x_{k} - \xi} + \norm{T \left( \xi \right) - v} = \norm{x_{k} - \xi} + \norm{\xi - v} \leq 2 D_{0} \left( x_{0} , v , \xi \right).
\end{align*}
Now,  let $k \geq 0$ be fixed. By the definition of $x_{k+1}$ in \eqref{algo:Halpern}, we have
\begin{align}
\dfrac{1}{2} \norm{x_{k+1} - \xi}^{2} & = \dfrac{1}{2} \norm{\alpha_{k} v + \left( 1 - \alpha_{k} \right) T \left( x_{k} \right) - \xi}^{2} = \dfrac{1}{2} \norm{\alpha_{k} (v - \xi) + \left( 1 - \alpha_{k} \right) (T \left( x_{k} \right) - \xi)}^{2} \nonumber \\
& = \dfrac{1}{2} \left( 1 - \alpha_{k} \right) ^{2} \norm{T \left( x_{k} \right) - \xi}^{2} + \left( 1 - \alpha_{k} \right) \alpha_{k} \scal{T \left( x_{k} \right) - \xi , v - \xi} + \dfrac{1}{2} \alpha_{k}^{2} \norm{v - \xi}^{2} \nonumber \\
& = \dfrac{1}{2} \left( 1 - \alpha_{k} \right) ^{2} \norm{T \left( x_{k} \right) - T \left( \xi \right)}^{2} + \left( 1 - \alpha_{k} \right) \alpha_{k} \scal{x_{k} - \xi, v - \xi} \nonumber \\
& \quad - \left( 1 - \alpha_{k} \right) \alpha_{k} \scal{x_{k} - T \left( x_{k} \right) , v - \xi} + \dfrac{1}{2} \alpha_{k}^{2} \norm{v - \xi}^{2}.
\label{intermed413}
\end{align}
Using successively the nonexpansiveness of $T$, the Cauchy-Schwarz inequality, and the fact that $\alpha_{k} \in \left( 0 , 1 \right)$, we infer
\begin{align*}
& \dfrac{1}{2} \left( 1 - \alpha_{k} \right) ^{2} \norm{T \left( x_{k} \right) - T \left(\xi \right)}^{2} - \alpha_{k}^{2} \scal{x_{k} - \xi , v - \xi} - \left( 1 - \alpha_{k} \right) \alpha_{k} \scal{x_{k} - T \left( x_{k} \right) , v - \xi} + \dfrac{1}{2} \alpha_{k}^{2} \norm{v - \xi}^{2} \nonumber \\
\leq \: 	& \dfrac{1}{2} \left( 1 - \alpha_{k} \right) ^{2} \norm{x_{k} -  \xi}^{2} + \alpha_{k}^{2} \norm{x_{k} - \xi} \norm{v - \xi} + \left( 1 - \alpha_{k} \right) \alpha_{k} \norm{x_{k} - T \left( x_{k} \right)} \norm{v - \xi} + \dfrac{1}{2} \alpha_{k}^{2} \norm{v - \xi}^{2} \nonumber \\
\leq \: 	& \dfrac{1}{2} \left( 1 - \alpha_{k} \right) \norm{x_{k} -  \xi}^{2} + \alpha_{k}^{2} \norm{x_{k} - \xi} \norm{v - \xi} + \alpha_{k} \norm{x_{k} - T \left( x_{k} \right)} \norm{v - \xi} + \dfrac{1}{2} \alpha_{k}^{2} \norm{v - \xi}^{2}\nonumber \\
\leq \: 	& \dfrac{1}{2} \left( 1 - \alpha_{k} \right) \norm{x_{k} -  \xi}^{2} + \frac{3}{2} D_{0}^{2} \left( x_{0} , v , \xi \right)  \alpha_{k}^{2} + D_{0} \left( x_{0} , v , \xi \right) \alpha_{k} \norm{x_{k} - T \left( x_{k} \right)} ,
\end{align*}
and the announced statement follows by plugging this estimate into \eqref{intermed413}.
\end{proof}

In analogy to the analysis in the continuous time regime, we provide a result that allows us to derive convergence rates for the discrete velocity and the fixed point residual of the operator, as well as ensuring strong convergence for the sequence of iterates.

In the following, for $\left( x_{0} , v \right) \in \sH \times \sH$,  we set
\begin{equation*}
D_{1} \left( x_{0} , v; \Fix T \right) \coloneq \max \seq{\dist_{\Fix T} \left( x_{0} \right), \dist_{\Fix T}(v)}.
\end{equation*}

\begin{proposition}
\label{prop:numT}
Let $\seq{x_{k}} _{k \geq 0}$ be the sequences generated by the iterative scheme \eqref{algo:Halpern}.  Then, the following statement are true:
\begin{enumerate}
\item 
\label{prop:numT:rate}
for every $k \geq 1$ it holds
\begin{align}
\norm{x_{k+1} - x_{k}} & \leq \left( 1 - \alpha_{k} \right) \norm{x_{k} - x_{k-1}} + 2 D_{1} \left( x_{0} , v; \Fix T \right)  \abs{\alpha_{k} - \alpha_{k-1}} , \label{numT:vel} \\
\norm{x_{k+1} - T \left( x_{k+1} \right)} & \leq \left( 1 - \alpha_{k} \right) \norm{x_{k+1} - x_{k}} + 2 D_{1} \left( x_{0} , v; \Fix T \right) \alpha_{k} \label{numT:opn} ;
\end{align}
\item 
\label{prop:numT:conv}
if $\norm{x_{k} - T \left( x_{k} \right)} \to 0$ as $k \to + \infty$, then $\seq{x_{k}} _{k \geq 0}$ converges strongly to the fixed point of $T$ closest to $v$,  $\xi_{*} \coloneq \proj_{\Fix T} \left( v \right)$, as $k \to + \infty$.
\end{enumerate}
\end{proposition}
\begin{proof}
\begin{enumerate}[wide, labelwidth=!, labelindent=0pt]
\item Let $k \geq 1$ be fixed.
According to \eqref{algo:Halpern}, and by using the triangle inequality, we can derive that
\begin{align*}
\norm{x_{k+1} - x_{k}} & = \norm{\alpha_{k} v + \left( 1 - \alpha_{k} \right) T \left( x_{k} \right) - \alpha_{k-1} v - \left( 1 - \alpha_{k-1} \right) T \left( x_{k-1} \right)} \nonumber \\
& = \norm{\left( 1 - \alpha_{k} \right) \left( T \left( x_{k} \right) - T \left( x_{k-1} \right) \right) + \left( \alpha_{k} - \alpha_{k-1} \right) \left( v - T \left( x_{k-1} \right) \right)} \nonumber \\
& \leq \left( 1 - \alpha_{k} \right) \norm{T \left( x_{k} \right) - T \left( x_{k-1} \right)} + \abs{\alpha_{k} - \alpha_{k-1}} \norm{T \left( x_{k-1} \right) - v} \nonumber \\
& \leq \left( 1 - \alpha_{k} \right) \norm{x_{k} - x_{k-1}} + 2 D_{0} \left( x_{0} , v,  \xi \right) \abs{\alpha_{k} - \alpha_{k-1}} ,
\end{align*}
which yields further
\begin{align*}
\norm{x_{k+1} - T \left( x_{k+1} \right)} & = \norm{\alpha_{k} v + \left( 1 - \alpha_{k} \right) T \left( x_{k} \right) - T \left( x_{k+1} \right)} \nonumber \\
& \leq \left( 1 - \alpha_{k} \right) \norm{T \left( x_{k} \right) - T \left( x_{k+1} \right)} + \alpha_{k} \norm{v - T \left( x_{k+1} \right)} \nonumber \\
& \leq \left( 1 - \alpha_{k} \right) \norm{x_{k+1} - x_{k}} + 2 D_{0} \left( x_{0} , v,  \xi \right) \alpha_{k} .
\end{align*}	
The statements follow after taking the infimum over all $\xi \in  \Fix T$.

\item 
In the view of \eqref{numT:inq}, we will prove that
\begin{equation}
\label{numT:limsup}
\limsup_{k \to + \infty} \scal{x_{k} - \xi_{*} , v - \xi_{*}} \leq 0 .
\end{equation}
Recall that the sequence of iterates $\seq{x_{k}} _{k \geq 0}$ is bounded, see \cref{lem:numT}, therefore there exists a subsequence $\seq{x_{k_{n}}} _{n \geq 0}$ of $\seq{x_{k}} _{k \geq 0}$  that converges weakly to $\widehat{x}$ as $n \to + \infty$ and
\begin{equation*}
\limsup_{k \to + \infty} \scal{x_{k} - \xi_{*} , v - \xi_{*}} = \lim_{n \to + \infty} \scal{x_{k_{n}} - \xi_{*} , v - \xi_{*}} = \scal{\widehat{x} - \xi_{*} , v - \xi_{*}} .
\end{equation*}
Since,  according to the assumption, $\norm{x_{k_{n}} - T \left( x_{k_{n}} \right)} \to 0$ as $n \to + \infty$, 
the Browder’s demiclosedness principle (\cite[Theorem 4.27]{Bauschke-Combettes:book}) yields $\widehat{x} \in \Fix T$. Therefore, since $\xi_{*} \coloneq \proj_{\Fix T} \left( v \right)$, we have from the variational characterization of the projection that (\cite[Theorem 3.16]{Bauschke-Combettes:book})
\begin{equation*}
\scal{\widehat{x} - \xi_{*} , v - \xi_{*}} \leq 0 .
\end{equation*}
This yields \eqref{numT:limsup}, which allows us to deduce further that
\begin{align*}
	& \limsup_{k \to + \infty} \left( \scal{x_{k} - \xi_{*} , v - \xi_{*}} +  D_{0} ( x_{0} , v , \xi) \left(\frac{3}{2}  D_{0} ( x_{0} , v , \xi) \alpha_{k} + \norm{x_{k} - T \left( x_{k} \right)} \right)  \right) \nonumber \\
	= \ 	& \limsup_{k \to + \infty} \scal{x_{k} - \xi_{*} , v - \xi_{*}} \leq 0 .
\end{align*}
We are therefore in a position to use \cref{lem:lims-0} with
\begin{equation*}
	a_{k} \coloneq \dfrac{1}{2} \norm{x_{k} -  \xi_{*}}^{2} , \quad
	b_{k} \coloneq \scal{x_{k} - \xi_{*} , v - \xi_{*}} + D_{0} ( x_{0} , v , \xi) \left(\frac{3}{2}  D_{0} ( x_{0} , v , \xi) \alpha_{k} + \norm{x_{k} - T \left( x_{k} \right)} \right) , \quad
	d_{k} \coloneq 0,
\end{equation*} 
which yields the conclusion.
\qedhere
\end{enumerate}
\end{proof}

Relying on the estimates \eqref{numT:vel}-\eqref{numT:opn}, we can formulate further conditions on the sequence $\seq{\alpha_{k}}_{k \geq 0}$ which, in the light of \cref{prop:numT} (ii),  lead to the strong convergence of the sequence generated by the Halpern iteration \eqref{algo:Halpern}.

\begin{theorem}
\label{thm:Halpern-sc}
Let $\seq{x_{k}} _{k \geq 0}$ be the sequence generated by the iterative scheme \eqref{algo:Halpern}.
Assume that, in addition to \eqref{cond:alpha-k},  the sequence $\seq{\alpha_{k}}_{k \geq 0}$  satisfies either
\begin{equation}
\label{alpha-BV}
\mysum_{k \geq 0} \abs{\alpha_{k+1} - \alpha_{k}} < + \infty
\end{equation}
or
\begin{equation}
\label{alpha-lim}	
\lim\limits_{k \to + \infty} \dfrac{\abs{\alpha_{k+1} - \alpha_{k}}}{\alpha_{k+1}} = 0.
\end{equation}
Then, $\norm{x_{k} - T \left( x_{k} \right)} \to 0$ as $k \to + \infty$, and therefore the sequence $\seq{x_{k}} _{k \geq 0}$ strongly converges to $\xi_{*} \coloneq \proj_{\Fix T} \left( v \right)$ as $k \to + \infty$.
\end{theorem}
\begin{proof}
Let $\xi \in \Fix T$.  The statement follows by applying \cref{lem:lims-0} to the inequality \eqref{numT:vel} in two different ways, depending on the additional assumptions for $\seq{\alpha_{k}}_{k \geq 0}$.  Specifically,  for every $k \geq 1$ we set $a_{k} \coloneq \norm{x_{k} - x_{k-1}}$. If  \eqref{alpha-BV} holds, we choose $b_{k} \coloneq 0$ and $d_{k} \coloneq D_{1} \left( x_{0} , v; \Fix T \right) \abs{\alpha_{k} - \alpha_{k-1}}$, while if \eqref{alpha-lim} holds we choose $b_{k} \coloneq D_{1} \left( x_{0} , v; \Fix T \right) \frac{\abs{\alpha_{k} - \alpha_{k-1}}}{\alpha_{k}}$ and $d_{k} \coloneq 0$, repsectively.  In both cases we can conclude that $\norm{x_{k} - x_{k-1}} \to 0$ as $k \to + \infty$, which immediately gives $\norm{x_{k} - T \left( x_{k} \right)} \to 0$ as $k \to + \infty$ due to the inequality \eqref{numT:opn} and the assumption $\lim_{k \to + \infty} \alpha_{k} = 0$.   \cref{prop:numT} (ii) gives the desired conclusion.
\end{proof}

\begin{remark}\label{rem44}
The condition \eqref{alpha-BV} is  due to Wittmann \cite{Wittmann},  while \eqref{alpha-lim} was proposed by Xu in \cite{Xu:02}.  As pointed out in \cite[Examples 3.1 and 3.2]{Xu:03}, these two conditions are not comparable in the sense that neither of them implies the other.  Note also that the two conditions above can be interpreted as discrete versions of {\eqref{eps:BV} and \eqref{eps:lim}} respectively.
\end{remark}

To illustrate the theoretical results and to derive explicit convergence rates, we consider first the case $\alpha_{k} \coloneq \frac{\alpha-1}{\left( k+b \right) ^{q}}$ for every $k \geq 0$, where $0 < q < 1$, $\alpha > 1$ and $b >0$.
\begin{theorem}
\label{thm:algo:0<q<1}
Let $0 < q < 1$, $\alpha > 1$, $b >0$ with $\alpha -1 < b^q$.  For given $x_{0}, v \in \sH$, let $\seq{x_{k}} _{k \geq 0}$ be the sequence generated by
\begin{equation*}
x_{k+1} \coloneq \dfrac{\alpha - 1}{\left( k+b \right) ^{q}} v + \left( 1 - \dfrac{\alpha - 1}{\left( k+b \right) ^{q}} \right) T \left( x_{k} \right) \quad \forall k \geq 0.
\end{equation*}
Then,  the following statement are true:
\begin{enumerate}
\item 
{as $k \to + \infty$} it holds
\begin{align*}
\norm{x_{k+1} - x_{k}}  & \ {= \bO \left( \dfrac{1}{k+b+1} \right)} ; \nonumber \\
\norm{x_{k+1} - T \left( x_{k+1} \right)} & \ {
	= \bO \left( \dfrac{1}{\left( k+b \right) ^{q}} \right) } ;
\end{align*}
\item the sequence $\seq{x_{k}} _{k \geq 0}$ converges strongly to $\xi_{*} \coloneq \proj_{\Fix T} \left( v \right)$ as $k \to + \infty$.
\end{enumerate}
\end{theorem}
\begin{proof}
Since $0 < q < 1$, the function $t \mapsto t^{q}$ is nondecreasing and concave on $\left( 0 , + \infty \right)$. We thus have $0 \leq \left( k+b \right) ^{q} - \left( k+b-1 \right) ^{q} \leq q \left( k+b-1 \right)^{q-1}$ for every $k \geq 1$. Therefore, according to \eqref{numT:vel} in  \cref{prop:numT} (i),  we have for every $k \geq 1$ that
\begin{align*}
\norm{x_{k+1} - x_{k}} & \leq \left( 1 - \dfrac{\alpha - 1}{\left( k+b \right) ^{q}} \right) \norm{x_{k} - x_{k-1}} + 2 D_{1} \left( x_{0} , v; \Fix T \right) \abs{\dfrac{\alpha - 1}{\left( k+b \right) ^{q}} - \dfrac{\alpha - 1}{\left( k+b-1 \right) ^{q}}} \nonumber \\
& \leq \left( 1 - \dfrac{\alpha - 1}{\left( k+b \right) ^{q}} \right) \norm{x_{k} - x_{k-1}} + \dfrac{2q \left( \alpha - 1 \right) D_{1} \left( x_{0} , v; \Fix T \right)}{\left( k+b \right) ^{q} \left( k+b-1 \right)} ,
\end{align*}
and the statements follow from \cref{lem:rate:0<q<1} and the inequality \eqref{numT:opn}, respectively, after taking the infmum over all  $\xi \in \Fix T$.  As $\norm{x_{k} - T \left( x_{k} \right)} \to 0$ as $k \to +\infty$,  the strong convergence of the sequence of iterates follows from   \cref{prop:numT} (ii).
\end{proof}

There are several reasons for focusing on the case $q=1$.  One reason, of course, is to derive the discrete counterpart of the Theorem \ref{thm:Tikh:q=1} which gave the best rates of convergence in the continuous setting. Additionally, our approach in this critical case offers an alternative proof for the result by Lieder in \cite{Lieder}, which corresponds to the scenario where $\alpha_{k} = \frac{1}{k+2}$ for every $k \geq 0$.  Moreover, our method allows $v$ to vary within $\sH$ instead of fixing it as $x_{0}$.

\begin{theorem}
\label{thm:algo:q=1}
Let $\alpha > 1$.  For given $x_{0}, v \in \sH$, let $\seq{x_{k}} _{k \geq 0}$ be the sequence generated by
\begin{equation*}
x_{k+1} \coloneq \dfrac{\alpha - 1}{k + \alpha} v + \left( 1 - \dfrac{\alpha - 1}{k + \alpha} \right) T \left( x_{k} \right) \quad \forall k \geq 0.
\end{equation*}
Then,  the following statements are true:
\begin{enumerate}
\item 
as $k \to + \infty$ it holds
\begin{align*}
\norm{x_{k+1} - x_{k}} & = \bO \left( \dfrac{1}{k^{\min \left\lbrace \alpha - 1 , 1 \right\rbrace}} \right) , \\
\norm{x_{k+1} - T \left( x_{k+1} \right)} & =  \bO \left( \dfrac{1}{k^{\min \left\lbrace \alpha - 1 , 1 \right\rbrace}} \right) ;
\end{align*}
	
\item 
the sequence $\seq{x_{k}} _{k \geq 0}$ converges strongly to $\xi_{*} \coloneq \proj_{\Fix T} \left( v \right)$ as $k \to + \infty$.
\end{enumerate}
\end{theorem}

\begin{proof}
We will focus on rate of convergence for the fixed point residual.  The strong convergence of the sequence $\{x_k \}_{k \geq 0}$ is a direct consequence of  \cref{prop:numT} (ii).
	
For every $k \geq 1$, we have from \eqref{numT:vel} in   \cref{prop:numT} {(i)} that
\begin{align*}
\norm{x_{k+1} - x_{k}} & \leq \left( 1 - \dfrac{\alpha - 1}{k + \alpha} \right) \norm{x_{k} - x_{k-1}} + 2 D_{1} \left( x_{0} , v; \Fix T \right) \abs{\dfrac{\alpha - 1}{k + \alpha} - \dfrac{\alpha - 1}{k + \alpha - 1}} \nonumber \\
& \leq \left( 1 - \dfrac{\alpha - 1}{k + \alpha} \right) \norm{x_{k} - x_{k-1}} + \dfrac{2 \left( \alpha - 1 \right) D_{1} \left( x_{0} , v; \Fix T \right)}{\left( k + \alpha \right) \left( k + \alpha - 1 \right)} .
\end{align*}

For $\alpha \neq 2$, the estimates for the discrete velocity and the fixed point residual follow from \cref{lem:rate:q=1} and \eqref{numT:opn}.   For $\alpha = 2$,  \cref{lem:rate:q=1}
would give rates of convergence of $\bO \left( \frac{\log \left( k \right)}{k} \right)$ as $k \rightarrow +\infty$. In the following we will see that we can get rid of the $\log \left( k \right)$ term by a different approach, inspired by the continuous time analysis.  To fix the setting, the iteration \eqref{algo:Halpern} is in this case for every $k \geq 0$
\begin{equation}
\label{algo:alpha=2}
x_{k+1} \coloneq \dfrac{1}{k+2} v + \dfrac{k+1}{k+2} T \left( x_{k} \right),
\end{equation}
which also implies
\begin{equation}
\label{algo:Lieder}
T \left( x_{k} \right) - x_{k+1} = \dfrac{1}{k+1} \left( x_{k+1} - v \right) .
\end{equation}
For every $k \geq 1$, we consider the difference
\begin{align}
& \dfrac{1}{2} \left( k+2 \right) ^{2} \norm{x_{k+1} - x_{k}}^{2} - \dfrac{1}{2} \left( k+1 \right) ^{2} \norm{x_{k} - x_{k-1}}^{2} \nonumber \\
= \ & \scal{ \left( k+1 \right) \left( x_{k} - x_{k-1} \right) , \left( k+2 \right) \left( x_{k+1} - x_{k} \right) - \left( k+1 \right) \left( x_{k} - x_{k-1} \right) } \nonumber \\
& + \dfrac{1}{2} \norm{\left( k+2 \right) \left( x_{k+1} - x_{k} \right) - \left( k+1 \right) \left( x_{k} - x_{k-1} \right)}^{2} . \label{numT:pre}
\end{align}
By making use of the formulas \eqref{algo:alpha=2} and \eqref{algo:Lieder}, respectively, we observe that for every $k \geq 1$
\begin{align*}
& \left( k+2 \right) \left( x_{k+1} - x_{k} \right) - \left( k+1 \right) \left( x_{k} - x_{k-1} \right) \nonumber \\
= \ 	& v + \left( k+1 \right) T \left( x_{k} \right) - \left( k+2 \right) x_{k} - v - k T \left( x_{k-1} \right) + \left( k+1 \right) x_{k-1} \nonumber \\
= \ 	& - \left( k+1 \right) \Bigl( \left(x_{k} - T \left( x_{k} \right) \right) - \left(x_{k-1} - T \left( x_{k-1} \right) \right) \Bigr) - x_{k} + T \left( x_{k-1} \right) \nonumber \\
= \ 	& - \left( k+1 \right) \Bigl( \left( x_{k} - T \left( x_{k} \right) \right)- \left( x_{k-1} - T \left( x_{k-1} \right) \right) \Bigr) + \dfrac{1}{k} \left( x_{k} - v \right) .
\end{align*}
Plugging this relation into \eqref{numT:pre}, we deduce that for every $k \geq 1$
\begin{align*}
& \dfrac{1}{2} \left( k+2 \right) ^{2} \norm{x_{k+1} - x_{k}}^{2} - \dfrac{1}{2} \left( k+1 \right) ^{2} \norm{x_{k} - x_{k-1}}^{2} \nonumber \\
= \ & \scal{ \left( k+1 \right) \left( x_{k} - x_{k-1} \right) , - \left( k+1 \right) \Bigl( \left( x_{k} - T \left( x_{k} \right) \right) - \left(x_{k-1} - T \left( x_{k-1} \right) \right) \Bigr) + \dfrac{1}{k} \left( x_{k} - v \right) } \nonumber \\
& \ + \dfrac{1}{2} \norm{- \left( k+1 \right) \Bigl( \left(x_{k} - T \left( x_{k} \right) \right) - \left( x_{k-1} - T \left( x_{k-1} \right) \right) \Bigr) + \dfrac{1}{k} (x_{k}-v)}^{2} \nonumber \\
= \ & - \left( k+1 \right) ^{2} \scal{ x_{k} - x_{k-1} , \left( x_{k} - T \left( x_{k} \right) \right) - \left( x_{k-1} - T \left( x_{k-1} \right) \right)} + \dfrac{k+1}{k} \scal{ x_{k} - x_{k-1} , x_{k} - v } \nonumber \\
& + \dfrac{1}{2} \left( k+1 \right) ^{2} \norm{\left( x_{k} - T \left( x_{k} \right) \right) - \left( x_{k-1} - T \left( x_{k-1} \right) \right)}^{2} \nonumber \\
& - \dfrac{k+1}{k} \scal{ \left(x_{k} - T \left( x_{k} \right) \right) - \left( x_{k-1} - T \left( x_{k-1} \right) \right) , x_{k} - v }  + \dfrac{1}{2k^{2}} \norm{x_{k} - v}^{2} \nonumber \\
\leq \ & \dfrac{k+1}{k} \scal{ T \left( x_{k} \right) - T \left( x_{k-1} \right) , x_{k} - v } + \dfrac{1}{2k^{2}} \norm{x_{k} - v}^{2} ,
\end{align*}
where the last inequality comes from the $\frac{1}{2}$-cocoercivity of $\Id - T$, see \eqref{pre:coco}.  Furthermore,  by taking into consideration the definition of iteration $x_{k}$,  we have for every $k \geq 1$
\begin{align*}	
\dfrac{k+1}{k} \scal{ T \left( x_{k} \right) - T \left( x_{k-1} \right) , x_{k} - v } 
& = \scal{ T \left( x_{k} \right) - T \left( x_{k-1} \right) , T \left( x_{k-1} \right) - v } \nonumber \\
& = - \dfrac{1}{2} \norm{T \left( x_{k} \right) - T \left( x_{k-1} \right)}^{2} - \dfrac{1}{2} \norm{T \left( x_{k-1} \right) - v}^{2} + \dfrac{1}{2} \norm{T \left( x_{k} \right) - v}^{2} \nonumber \\
& \leq - \dfrac{1}{2} \norm{T \left( x_{k-1} \right) - v}^{2} + \dfrac{1}{2} \norm{T \left( x_{k} \right) - v}^{2}.
\end{align*}
Therefore, by also ultilizing \cref{lem:numT}, we conclude that for every $k \geq 1$
\begin{align*}
\dfrac{1}{2} \left( k+2 \right) ^{2} \norm{x_{k+1} - x_{k}}^{2} - \dfrac{1}{2} \left( k+1 \right) ^{2} \norm{x_{k} - x_{k-1}}^{2} 
& \leq \dfrac{1}{2} \norm{T \left( x_{k} \right) - v}^{2} - \dfrac{1}{2} \norm{T \left( x_{k-1} \right) - v}^{2} + \dfrac{1}{2k^{2}} \norm{x_{k} - v}^{2} \nonumber \\
& \leq \dfrac{1}{2} \norm{T \left( x_{k} \right) - v}^{2} - \dfrac{1}{2} \norm{T \left( x_{k-1} \right) - v}^{2} + \dfrac{2D_{1} \left( x_{0} , v; \Fix T \right)^2 }{k^{2}} .
\end{align*}
By telescoping,  we obtain for every $k \geq 1$
\begin{align*}
\dfrac{1}{2} \left( k+2 \right) ^{2} \norm{x_{k+1} - x_{k}}^{2} - 2 \norm{x_{1} - x_{0}}^{2}
& \leq \dfrac{1}{2} \norm{T \left( x_{k} \right) - v}^{2} - \dfrac{1}{2} \norm{T \left( x_{0} \right) - v}^{2} + 2D_{1} \left( x_{0} , v; \Fix T \right)^2  \sum_{i=1}^{k} \dfrac{1}{i^{2}} \nonumber \\
& \leq \dfrac{1}{2} \norm{T \left( x_{k} \right) - v}^{2} - \dfrac{1}{2} \norm{T \left( x_{0} \right) - v}^{2} + \dfrac{\pi^2 D_{1} \left( x_{0} , v; \Fix T \right)^2 }{3}.
\end{align*}
Using that $2 \left( x_{1} - x_{0} \right) = v + T \left( x_{0} \right) - 2x_{0}$,  thiy yields for every $k \geq 1$
\begin{align}
\left( k+2 \right) ^{2} \norm{x_{k+1} - x_{k}}^{2} & \leq \norm{T \left( x_{k} \right) - v}^{2} + \dfrac{2\pi^2 D_{1} \left( x_{0} , v; \Fix T \right)^2 }{3} + \norm{v + T \left( x_{0} \right) - 2x_{0}}^{2} - \norm{T \left( x_{0} \right) - v}^{2} \nonumber \\
& \leq \frac{2(6 +\pi^2)}{3} D_{1} \left( x_{0} , v; \Fix T \right)^2 + \norm{v + T \left( x_{0} \right) - 2x_{0}}^{2} - \norm{T \left( x_{0} \right) - v}^{2}.  \label{alpha=2:pre}
\end{align}
where last inequality follows from \eqref{numT:bound:oper} in \cref{lem:numT}.  This and \eqref{numT:opn} in  \cref{prop:numT} give the estimates in case $\alpha =2$.
\end{proof}

\begin{remark}\label{rem47}
From the estimates given in the case of $\alpha=2$, one can see that it is reasonable to choose $v \coloneq x_{0}$, as Lieder did in \cite{Lieder}, since the last two terms in the expression under the square root can be simplified, and therefore the constant depends only on the distance from $x_0$ to the set $\Fix T$. 
\end{remark}

\subsection{The averaged operator case}\label{subsec43}

The above investigations conducted for nonexpansive operator can be extended to $\theta$-averaged operators $T \colon \sH \to \sH$ with $\theta \in \left( 0 , 1 \right]$. This extension provides greater flexibility in the considered approaches. 

A single-valued operator $T \colon \sH \to \sH$ is said to be  \emph{$\theta$-averaged} if there exists a nonexpansive operator $N \colon \sH \to \sH$ such that 
$$T = \left( 1 - \theta \right) \Id + \theta N.$$
In this case,  for every $0 \leq \lambda \leq \frac{1}{\theta}$ the operator $T_{\lambda} \coloneq \left( 1 - \lambda \right) \Id + \lambda T$ is nonexpansive.  Indeed,  $T_{\lambda}$ can be written as
\begin{equation*}
	T_{\lambda} = \left( 1 - \lambda \right) \Id + \lambda T = \left( 1 - \lambda \right) \Id + \lambda \left( \left( 1 - \theta \right) \Id + \theta N \right) = \left( 1 - \lambda \theta \right) \Id + \lambda \theta N ,
\end{equation*}
which is obviously nonexpansive.  Moreover,  it holds $\norm{x - T \left( x \right)} = \frac{1}{\lambda} \norm{x - T_{\lambda} \left( x \right)}$ for every $x \in \sH$ and $\Fix T = \Fix T_{\lambda}$. This shows that, in order to obtain an iterative method that detects the fixed points of $T$,  one simply has to replace $T$ by $T_{\lambda}$ in the iterative scheme \eqref{algo:Halpern},  without affecting the convergence results.

\begin{mdframed}
Let $v, x_{0} \in \sH$ and $0 < \lambda \leq \frac{1}{\theta}$. For every $k \geq 0$ set
\begin{equation}
\label{algo:Tikhonov}
x_{k+1} \coloneq \alpha_{k} v + \left( 1 - \alpha_{k} \right) \left( \left( 1 - \lambda \right) x_{k} + \lambda T \left( x_{k} \right) \right) .
\end{equation}
\end{mdframed}

According to \cref{thm:Halpern-sc}, the sequence $\seq{x_{k}} _{k \geq 0}$ strongly converges to $\xi_{*} \coloneq \proj_{\Fix T} \left( v \right)$ as $k \to + \infty$, provided that $\seq{\alpha_{k}} _{k \geq 0}$ satisfies one of the conditions \eqref{alpha-lim} or \eqref{alpha-BV} in addition to \eqref{cond:alpha-k}.  Convergence rates for the discrete velocity and the fixed point residual can be straighforwardly derived from Theorem \ref{thm:algo:0<q<1} for $\alpha_{k} \coloneq \frac{\alpha-1}{\left( k+b \right) ^{q}}$ for every $k \geq 0$, where $0 < q < 1$, $\alpha > 1$ and $b >0$, and from Theorem \ref{thm:algo:q=1} for $\alpha_{k} \coloneq \frac{\alpha-1}{k+\alpha}$ for every $k \geq 0$, where $\alpha > 1$.

Regarding the strong convergence of the iterates to $\xi_{*} \coloneq \proj_{\Fix T} \left( v \right)$, we will see below that in the case $0 < \lambda < \frac{1}{\theta}$ this is guaranteed only by assuming that the sequence $\seq{\alpha_{k}} _{k \geq 0}$ satisfies \eqref{cond:alpha-k}. The proof adapts an idea used in \cite{Chidume-Chidume, Suzuki:06}.

\begin{theorem}\label{strongconvergenceaveraged}
Let $0 < \lambda < \frac{1}{\theta}$ and assume that the sequence $\seq{\alpha_{k}} _{k \geq 0}$ satisfies only \eqref{cond:alpha-k}.  For given $x_{0}, v \in \sH$, let $\seq{x_{k}} _{k \geq 0}$ be the sequence generated by \eqref{algo:Tikhonov}. Then, the sequence $\seq{x_{k}} _{k \geq 0}$ strongly converges to $\xi_{*} \coloneq \proj_{\Fix T} \left( v \right)$ as $k \to + \infty$.
\end{theorem}
\begin{proof}
Bydefinition,  there exists a nonexpansive operator $N \colon \sH \to \sH$ such that
\begin{equation*}
\left( 1 - \lambda \right) \Id + \lambda T = \left( 1 - \lambda \right) \Id + \lambda \left( \left( 1 - \theta \right) \Id + \theta N \right) = \left( 1 - \lambda \theta \right) \Id + \lambda \theta N .
\end{equation*}
In particular, the iteration \eqref{algo:Tikhonov} becomes
\begin{equation}
x_{k+1} = \alpha_{k} v + \left( 1 - \alpha_{k} \right) \left( \left( 1 - \lambda \theta \right) x_{k} + \lambda \theta N \left( x_{k} \right) \right) \quad \forall k \geq 0.
\end{equation}
From \cref{lem:numT},  we have for every $k \geq 0$
\begin{equation*}
\norm{x_{k}-\xi_{*}} \leq D_{0} \left( x_{0} , v , \xi_{*} \right) ,
\end{equation*}
which, by the triangle inequality and the fact that $\xi_{*} \in \Fix T = \Fix N$, leads to
\begin{equation}
\label{suff:bnd}	
\norm{N \left( x_{k} \right)} \leq \norm{N \left( x_{k} \right) - N \left( \xi_{*} \right)} + \norm{\xi_{*}} \leq \norm{x_{k} - \xi_{*}} + \norm{\xi_{*}} \leq D_{0} \left( x_{0} , v , \xi_{*} \right) + \norm{\xi_{*}} .
\end{equation}
These shows that both sequences $\seq{x_{k}}_{k \geq 0}$ and $\seq{N \left( x_{k} \right)}_{k \geq 0}$ are bounded.

For every $k \geq 0$,  we define
\begin{align*}
\theta_{k} 	& \coloneq \alpha_{k} + \left( 1 - \alpha_{k} \right) \lambda \theta \in \left( 0 , 1 \right) , \nonumber \\
w_{k} 		& \coloneq \dfrac{1}{\alpha_{k} + \left( 1 - \alpha_{k} \right) \lambda \theta} \left( \alpha_{k} v + \left( 1 - \alpha_{k} \right) \lambda \theta N \left( x_{k} \right) \right) .
\end{align*}
Obviously, the sequence $\seq{w_{k}}_{k \geq 0}$ is also bounded. Furthermore,  $1 - \theta_{k} = \left( 1 - \alpha_{k} \right) \left( 1 - \lambda \theta \right)$ and $\lim_{k \to + \infty} \theta_{k} = \lambda \theta \in \left( 0 , 1 \right)$.
For every $k \geq 0$, we have
\begin{align*}
w_{k} = \dfrac{1}{\theta_{k}} \left( \alpha_{k} v + \left( 1 - \alpha_{k} \right) \lambda \theta N \left( x_{k} \right) \right) = \dfrac{1}{\theta_{k}} \left( x_{k+1} - \left( 1 - \theta_{k} \right) x_{k} \right) 
\end{align*}
or, equivalently,
\begin{equation*}
x_{k+1} = \left( 1 - \theta_{k} \right) x_{k} + \theta_{k} w_{k} .
\end{equation*}
Some simple calculations give for every $k \geq 0$
\begin{align*}
w_{k+1} - w_{k} 
& = \dfrac{1}{\theta_{k+1}} \left( \alpha_{k+1} v + \left( 1 - \alpha_{k+1} \right) \lambda \theta N \left( x_{k+1} \right) \right) - \dfrac{1}{\theta_{k}} \left( \alpha_{k} v + \left( 1 - \alpha_{k} \right) \lambda \theta N \left( x_{k} \right) \right) \nonumber \\
& = \left( \dfrac{\alpha_{k+1}}{\theta_{k+1}} - \dfrac{\alpha_{k}}{\theta_{k}} \right) v + \dfrac{\left( 1 - \alpha_{k+1} \right) \lambda \theta}{\theta_{k+1}} \left( N \left( x_{k+1} \right) - N \left( x_{k} \right) \right) + \left( \dfrac{1 - \alpha_{k+1}}{\theta_{k+1}} - \dfrac{1 - \alpha_{k}}{\theta_{k}} \right) \lambda \theta N \left( x_{k} \right),
\end{align*}
therefore, by using that $\theta_{k} = \alpha_{k} + \left( 1 - \alpha_{k} \right) \lambda \theta \geq \left( 1 - \alpha_{k} \right) \lambda \theta$ and \eqref{suff:bnd},  we obtain the folowing estimates
\begin{align*}
\norm{w_{k+1} - w_{k}} - \norm{x_{k+1} - x_{k}} & \leq \abs{\dfrac{\alpha_{k+1}}{\theta_{k+1}} - \dfrac{\alpha_{k}}{\theta_{k}}} \norm{v} + \dfrac{\left( 1 - \alpha_{k+1} \right) \lambda \theta}{\theta_{k+1}} \norm{N \left( x_{k+1} \right) - N \left( x_{k} \right)} \nonumber \\
& \quad + \abs{\left( \dfrac{1 - \alpha_{k+1}}{\delta_{k+1}} - \dfrac{1 - \alpha_{k}}{\theta_{k}} \right)} \lambda \theta \norm{N \left( x_{k} \right)} - \norm{x_{k+1} - x_{k}} \nonumber \\
& \leq \abs{\dfrac{\alpha_{k+1}}{\theta_{k+1}} - \dfrac{\alpha_{k}}{\theta_{k}}} \norm{v} + \left( \dfrac{\left( 1 - \alpha_{k+1} \right) \lambda \theta}{\theta_{k+1}} - 1 \right) \norm{x_{k+1} - x_{k}} \nonumber \\
& \quad + \abs{\left( \dfrac{1 - \alpha_{k+1}}{\theta_{k+1}} - \dfrac{1 - \alpha_{k}}{\theta_{k}} \right)} \lambda \theta \norm{N \left( x_{k} \right)} \nonumber \\
& \leq \abs{\dfrac{\alpha_{k+1}}{\theta_{k+1}} - \dfrac{\alpha_{k}}{\theta_{k}}} \norm{v} + \abs{\left( \dfrac{1 - \alpha_{k+1}}{\theta_{k+1}} - \dfrac{1 - \alpha_{k}}{\theta_{k}} \right)} \lambda \theta \left( D_{0} \left( x_{0} , v , \xi_{*} \right) + \norm{\xi_{*}} \right) .
\end{align*}

This guarantees that $\limsup_{k \to + \infty} \left( \norm{w_{k+1}-w_{k}} - \norm{x_{k+1}-x_{k}} \right) \leq 0$, which, by using \cref{lem:Suzuki}, allows us to conclude that $\lim_{k \to + \infty} \norm{w_{k}-x_{k}} = 0$. Consequently,  $\lim_{k \to + \infty} \norm{x_{k+1}-x_{k}} = \lim_{k \to + \infty} \theta_k \norm{w_{k}-x_{k}} = 0$, which, by using \eqref{numT:opn}, further implies $\lim_{k \to + \infty} \norm{x_{k}-T(x_{k})} = 0$. The conclusion follows from  \cref{prop:numT} (ii).
\end{proof}

\begin{remark}\label{rmk:bcm}
Bo\c{t}, Csetnek, and Meier proposed in \cite{Bot-Csetnek-Meier} for $z_0 \in \sH$ the following iterative scheme
\begin{equation}
	\label{algo:BCM}
	z_{k+1} \coloneq \beta_{k} z_{k} + \lambda \left( T \left( \beta_{k} z_{k} \right) - \beta_{k} z_{k} \right) \quad \forall k \geq 0,
\end{equation}
with $\seq{\beta_{k}}_{k \geq 0} \subseteq (0,1], \lim_{k \rightarrow + \infty} \beta_k =1, \sum_{k \geq 0} (1-\beta_k) = +\infty$ and $\sum_{k \geq 1} |\beta_{k}-\beta_{k-1}| < +\infty$,  and $0 < \lambda \leq \frac{1}{\theta}$,  designed with the aim of generating a strongly convergent sequence $\seq{z_{k}}_{k \geq 0}$ to the fixed point of a $\theta$-averaged operator $T : \sH \rightarrow \sH$ with minimum norm.

By setting $x_{k} \coloneq \beta_{k} z_{k}$ for every $k \geq 0$, the iterative scheme \eqref{algo:BCM} can be rewritten as
\begin{equation}
	\label{algo:BCM-H}
	x_{k+1} \coloneq \beta_{k+1} \left((1-\lambda)x_{k} + \lambda T \left( x_{k} \right) \right) \quad \forall k \geq 0,
\end{equation}
which corresponds to \eqref{algo:Tikhonov} with $\alpha_k \coloneq 1- \beta_{k+1}$ for every $k \geq 0$,  and $v \coloneq 0$.  Note that the strong convergence of $\seq{x_{k}}_{k \geq 0}$ to  $\proj_{\Fix T} \left( 0 \right)$ is a direct consequence of Theorem \ref{thm:Halpern-sc} in the context of fulfilling condition \eqref{alpha-BV}.  Since $\lim_{k \rightarrow + \infty} \beta_k =1$, this says nothing more than that the sequence $\seq{x_{k}}_{k \geq 0}$ strongly converges to  $\proj_{\Fix T} \left( 0 \right)$ as $k \rightarrow +\infty$.  Furthermore,  if $0 < \lambda \leq \frac{1}{\theta}$, then, according to Theorem \ref{strongconvergenceaveraged},  the same conclusion follows even without assuming that $\sum_{k \geq 1} |\beta_{k}-\beta_{k-1}| < +\infty$.

In additon, the rates of convergence for $\norm{x_{k} - T \left( x_{k} \right)}$,  as provided by the Halpern iteration, can be transferred to the discrete velocity and the fixed point residual expressed in terms of the original sequence $\seq{z_{k}}_{k \geq 0}$, namely, by using the boundedness of the latter and that for every $k \geq 1$, the following holds
\begin{align*}
\norm{z_{k+1} -  z_{k}} & \leq (1- \beta_k) \norm{z_{k}} + \lambda \norm{x_{k} - T \left( x_{k} \right)} = \alpha_{k-1} \norm{z_{k}} + \lambda \norm{x_{k} - T \left( x_{k} \right)} ,
\end{align*}
and
\begin{align*}
\norm{z_{k} - T \left( z_{k} \right)}  \leq & \ \norm{z_{k} - x_{k}} + \norm{x_{k} - T \left( x_{k} \right)} + \norm{T \left( x_{k} \right) - T \left( z_{k} \right)} \nonumber\\
 \leq & \ 2 \norm{z_{k} - x_{k}} + \norm{x_{k} - T \left( x_{k} \right)} =  2 (1- \beta_k)  \norm{z_{k}} + \norm{x_{k} - T \left( x_{k} \right)} \nonumber \\
= & \ 2\alpha_{k-1}  \norm{z_{k}} + \norm{x_{k} - T \left( x_{k} \right)},
\end{align*}
respectively.
\end{remark}

\begin{remark} 
\label{rmk:Kim} 
For $A \colon \sH \rightrightarrows \sH$ a (set-valued) maximally monotone operator, the problem of finding a zero of $A$, this is an element in the set $\zer A \coloneq\{x \in \sH \colon 0 \in A(x) \}$, is for every $\eta >0$ equivalent to the problem of finding a fixed point of the resolvent $J_{\eta A} \colon \sH \rightarrow \sH, J_{\eta A} = (\Id + \eta A)^{-1}$, which is a single-valued and $\frac{1}{2}$-averaged operator.  The classical proximal point algorithm,  defined for a given $x_0 \in \sH$ as
\begin{equation*}
x_{k+1} = J_{\eta A} \left( x_{k} \right) \quad \forall k \geq 0,
\end{equation*}
produces a sequence $\seq{x_{k}}_{k \geq 0}$ that weakly converges to a zero of $A$ as $k \rightarrow +\infty$.  The fixed point residual $\left\lVert J_{\eta A} \left( x_{k} \right) - x_{k} \right\rVert$ vanishes with a rate of convergence of $\bO \left(\frac{1}{\sqrt{k}} \right)$ as $k \rightarrow +\infty$.

Gu and Yang have shown in  \cite{Gu-Yang} that for $\sH \coloneq \sR^{N}$, this rate of convergence can be improved to
\begin{equation*}
	\left\lVert J_{\eta A} \left( x_{k} \right) - x_{k} \right\rVert = \begin{dcases}
		\bO \left (\dfrac{1}{k} \right),	& \textrm{ if } N = 1 , \\
		\bO \left( \dfrac{1}{\sqrt{\left( 1 + \frac{1}{k} \right) ^{k} k}} \right), & \textrm{ if } N \geq 2 .
	\end{dcases} \quad \mbox{as} \ k \rightarrow +\infty.
\end{equation*}

In the setting of finite-dimensional Hilbert spaces,  Kim introduced in \cite{Kim} (see also \cite{Park-Ryu}) the following iterative scheme,  which, for given $y_{0} = x_{-1} = x_{0} \in \sH$,  for every $k \geq 0$,  reads
\begin{align}
	\begin{split}
		\label{algo:APPM}
		y_{k+1} 	& \coloneq J_{\eta A} \left( x_{k} \right) , \\
		x_{k+1}	& \coloneq y_{k+1} + \dfrac{k}{k+2} \left( y_{k+1} - y_{k} \right) - \dfrac{k}{k+2} \left( y_{k} - x_{k-1} \right) .
	\end{split}
\end{align}
Using the performance estimation approach,  he proved that this method has a fixed point residual  convergence rate of $\bO \left (\frac{1}{k} \right)$ as $k \rightarrow +\infty$.

One can note that, for every $k \geq 1$,  the second update formula in \eqref{algo:APPM} can be equivalently written as
\begin{align*}
	\left( k+2 \right) x_{k+1} & = 2(k+1)y_{k+1} - 2k y_{k} + k x_{k-1} , \\
	(k+1)x_{k} & = 2k y_{k} - 2 \left( k-1 \right) y_{k-1} + \left( k-1 \right) x_{k-2} , \\
	& \vdots \\
	3x_{2} & = 4y_{3} - 2y_{1} + x_{0} , \\
	2x_{1} & = 2y_{1}.
\end{align*}
Summing these relations, we obtain that for every $k \geq 0$
\begin{equation*}
	\left( k+2 \right) x_{k+1} + \left( k+1 \right) x_{k} = 2 \left( k+1 \right) y_{k+1} + x_{0} .
\end{equation*}
In the view of the first equation in \eqref{algo:APPM},  it yields that the sequence  $\seq{x_{k}}_{k \geq 0}$ generated by Kim's algorithm fulfills
\begin{equation*}
	x_{k+1} = \dfrac{1}{k+2} x_{0} + \dfrac{k+1}{k+2} \left( 2J_{\gamma A} \left( x_{k} \right) - x_{k} \right) \quad \forall k \geq 0.
\end{equation*}
Consequently,  according to Theorem \ref{thm:Halpern-sc} applied to averaged operators, even in infinite-dimensional Hilbert spaces, the sequence  $\seq{x_{k}}_{k \geq 0}$ strongly converges to $\proj_{\zer A}(x_0)$ as $k \rightarrow +\infty$. In addition,  from Theorem \ref{thm:algo:q=1} in case $\alpha=2$ it follows that $\norm{x_{k+1} - x_k}$ and $\left\lVert x_{k} - J_{\eta A} \left( x_{k} \right) \right\rVert$ converge to zero with a rate of convergence of $\bO \left (\frac{1}{k} \right)$ as $k \rightarrow +\infty$.
\end{remark}

\appendix
\section{Appendix}\label{secappendix}

In the appendix, we discuss the asymptotic behaviour of the trajectory generatd by \eqref{ds:simp} in case $M$ is an cocoercive operator (see \cite[Theorem 11]{Attouch-Bot-Nguyen} for a general formulation),  and outline additional results that play an essential role in the convergence of the continuous time models.

\subsection{The cocoercive case}
\label{missingproof}

\begin{theorem}
\label{thm:cocoercive}
Let $M \colon \sH \to \sH$ be a $\rho$-cocoercive operator for some $\rho > 0$, that is
\begin{equation*}
	\scal{M \left( x \right) - M \left( y \right) , x - y} \geq \rho \norm{M \left( x \right) - M \left( y \right)}^{2} \quad \forall x, y \in \sH,
\end{equation*}
with $\zer M \neq \emptyset$,  and $x \colon \left[ t_{0} , + \infty \right) \to \sH$ be the solution trajectory of
\begin{equation*}
\dot{x} \left( t \right) + M \left( x \left( t \right) \right) = 0,
\end{equation*}
with initial condition $x(t_{0}) \coloneq x_0 \in \sH$. Then, the following statements are true:
\begin{enumerate}
\item 
it holds
$
\left\lVert M \left( x \left( t \right) \right) \right\rVert = o \left( \frac{1}{\sqrt{t}}  \right) \textrm{ as } t \to + \infty ;
$
\item 
the solution trajectory $x(t)$ converges weakly to an element of $\zer M$ as $t \to +\infty$.
\end{enumerate}
\end{theorem}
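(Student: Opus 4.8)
The plan is to fix an arbitrary $\xi \in \zer M$ and to track the two functions $\varphi_{\xi}(t) := \tfrac12\norm{x(t)-\xi}^2$ and $\psi(t) := \tfrac12\norm{M(x(t))}^2 = \tfrac12\norm{\dot x(t)}^2$, exactly in the spirit of the analysis carried out in the body of the paper. Differentiating $\varphi_{\xi}$ along the flow and using $\dot x(t) = -M(x(t))$ together with $\rho$-cocoercivity and $M(\xi)=0$ gives, for almost every $t \geq t_0$,
\[
\dot\varphi_{\xi}(t) = \scal{x(t)-\xi,\dot x(t)} = -\scal{x(t)-\xi, M(x(t))-M(\xi)} \leq -\rho\norm{M(x(t))}^2 \leq 0 .
\]
Hence $\varphi_{\xi}$ is nonincreasing, so $\lim_{t\to+\infty}\norm{x(t)-\xi}$ exists for every $\xi\in\zer M$ (which is both the first hypothesis of Opial's Lemma and the boundedness of the trajectory), and integrating from $t_0$ to $+\infty$ yields $\rho\int_{t_0}^{+\infty}\norm{M(x(s))}^2 ds \leq \varphi_{\xi}(t_0) < +\infty$, that is, $\int_{t_0}^{+\infty}\norm{\dot x(s)}^2 ds < +\infty$.

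Next I would show that $\psi$ is nonincreasing. Since a $\rho$-cocoercive operator is $\tfrac1\rho$-Lipschitz, $t\mapsto M(x(t))$ is locally absolutely continuous, hence differentiable almost everywhere; dividing the monotonicity inequality $\scal{M(x(t+h))-M(x(t)),\, x(t+h)-x(t)} \geq 0$ by $h^2$ and letting $h\to 0$ gives $\scal{\tfrac{d}{dt}M(x(t)),\,\dot x(t)} \geq 0$ for almost every $t$. Because $\dot x(t) = -M(x(t))$, this says precisely that $\dot\psi(t) = \scal{M(x(t)),\tfrac{d}{dt}M(x(t))} \leq 0$. Combining monotonicity of $\psi$ with $\int_{t_0}^{+\infty}\psi(s)\,ds < +\infty$ first yields $\psi(t)\to 0$, and moreover $\tfrac{t}{2}\psi(t) \leq \int_{t/2}^{t}\psi(s)\,ds \to 0$ as $t\to+\infty$ (the tail of a convergent integral), so $t\psi(t)\to 0$. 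This is exactly $\norm{M(x(t))} = \norm{\dot x(t)} = o\!\left(\tfrac{1}{\sqrt t}\right)$, which proves statement (i).

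For statement (ii) I would invoke Opial's Lemma. Its first condition has already been verified above. For the second, let $\widehat x$ be any weak cluster point of $x(\cdot)$, say $x(t_k)\rightharpoonup\widehat x$ with $t_k\to+\infty$; by (i), $M(x(t_k))\to 0$ strongly. Since $M$ is monotone and continuous, it is maximally monotone, so its graph is sequentially closed in the weak$\times$strong topology of $\sH\times\sH$, whence $M(\widehat x)=0$, i.e. $\widehat x\in\zer M$. Opial's Lemma then delivers the weak convergence of $x(t)$ to an element of $\zer M$.

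The only genuinely delicate point I anticipate is the justification of the differentiation of $t\mapsto M(x(t))$ and of the almost-everywhere inequality $\scal{\tfrac{d}{dt}M(x(t)),\dot x(t)}\geq 0$ obtained by passing to the limit in the monotonicity relation: this manipulation is legitimate only on the full-measure set where both $x$ and $M\circ x$ are differentiable, which is where the standing absolute-continuity assumption on $t\mapsto M(x(t))$ (automatic here since $M$ is Lipschitz) is essential. Everything else is routine.
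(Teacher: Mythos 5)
Your proposal is correct, and it shares the paper's skeleton: the anchor function $\varphi_{\xi}$, the cocoercivity-based integral estimate $\int_{t_0}^{+\infty}\norm{M(x(s))}^2\,ds<+\infty$, the monotonicity-derived inequality $\scal{\dot x(t),\tfrac{d}{dt}M(x(t))}\geq 0$ (which the paper also uses, just written as $-t\scal{\dot x(t),\tfrac{d}{dt}M(x(t))}\leq 0$), and Opial's Lemma combined with the weak--strong closedness of the graph of the maximally monotone $M$ for statement (ii). Where you genuinely diverge is in how the rate in (i) is extracted: the paper works with the Lyapunov function $t\mapsto\tfrac{t}{2}\norm{M(x(t))}^2$, bounds its derivative by the integrable function $\tfrac12\norm{M(x(t))}^2$, invokes Lemma~\ref{lem:lim-R} to get existence of $\lim_{t\to+\infty}t\norm{M(x(t))}^2$, and combines this with $\liminf_{t\to+\infty}t\norm{M(x(t))}^2=0$; you instead observe that $\psi(t)=\tfrac12\norm{M(x(t))}^2$ is itself nonincreasing and conclude directly via the elementary tail estimate $\tfrac{t}{2}\psi(t)\leq\int_{t/2}^{t}\psi(s)\,ds\to 0$. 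Your route is more self-contained (no auxiliary lemma, no liminf step), while the paper's scheme is the one that survives when $\psi$ fails to be monotone, e.g.\ for perturbed or regularized flows where $\dot\psi$ is only bounded above by an integrable quantity, which is precisely the situation exploited elsewhere in the paper and in \cite{Attouch-Bot-Nguyen}. Your closing caveat about differentiating $t\mapsto M(x(t))$ is handled correctly: Lipschitz continuity of $M$ gives local absolute continuity of $M\circ x$, and the difference-quotient limit of the monotonicity inequality is taken only on the full-measure set where both $x$ and $M\circ x$ are differentiable.
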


\begin{proof}
Let $\xi \in \zer M$.  For every $t \geq t_{0}$, we consider
$$
\E \left( t \right) \coloneq \dfrac{1}{2} \left\lVert x \left( t \right) - \xi  \right\rVert ^{2}.
$$
Then, for almost every $t \geq t_{0}$ it holds
\begin{align*}
\dfrac{d}{dt} \E \left( t \right) & = \left\langle x \left( t \right) - \xi , \dot{x} \left( t \right) \right\rangle = - \left\langle x \left( t \right) - \xi,  M \left( x \left( t \right) \right) \right\rangle .
\end{align*}
By integration, we get for every $t \geq t_{0}$
\begin{align*}
\dfrac{1}{2} \left\lVert x \left( t \right) - \xi \right\rVert ^{2}  \leq \dfrac{1}{2} \left\lVert x \left( t \right) - \xi \right\rVert ^{2} + \int_{t_{0}}^{t} \left\langle x \left( s \right) - \xi , M \left( x \left( s \right) - M(\xi) \right) \right\rangle ds  \leq \dfrac{1}{2} \left\lVert x \left( t_{0} \right) - \xi \right\rVert ^{2}.
\end{align*}
This gives the boundedness of the trajectory.  On the other hand,  by invoking the $\rho$-cocoercivity of $M$ and that $M(\xi)=0$, and by letting $t$ converge to infinity, we obtain
\begin{equation}\label{eq:cocoer_100}
\rho \int_{t_{0}}^{+ \infty} \left\lVert M \left( x \left( s \right) \right) \right\rVert ^{2} ds \leq \int_{t_{0}}^{+ \infty} \left\langle x \left( s \right) - \xi, M \left( x \left( s\right) \right) \right\rangle du \leq \dfrac{1}{2} \left\lVert x \left( t_{0} \right) - \xi \right\rVert ^{2}  < + \infty.
\end{equation}
Therefore, $\liminf_{t \to + \infty} t \left\lVert M \left( x \left( t \right) \right) \right\rVert ^{2} = 0$. By computing the time derivative at almost every $t \geq t_{0}$ and by making use of the monotonicity of $M$, we get
\begin{align*}
\dfrac{d}{dt} \left( \dfrac{1}{2} t \left\lVert M \left( x \left( t \right) \right) \right\rVert ^{2} \right) & = \dfrac{1}{2}\left\lVert M \left( x \left( t \right) \right) \right\rVert ^{2} + t \left\langle M \left( x \left( t \right) \right) , \dfrac{d}{dt} \left( M \left( x \left( t\right) \right) \right) \right\rangle \nonumber \\
& = \dfrac{1}{2}\left\lVert M \left( x \left( t \right) \right) \right\rVert ^{2} - t \left\langle \dot{x} \left( t \right) , \dfrac{d}{dt} \left( M \left( x \left( t \right) \right) \right) \right\rangle \\
& \leq \dfrac{1}{2}\left\lVert M \left( x \left( t \right) \right) \right\rVert ^{2}.
\end{align*}

The right-hand side of the above inequality belongs to $\mathbb{L}^{1} \left( \left[ t_{0} , + \infty \right); \sH \right)$ thanks to \eqref{eq:cocoer_100}.  According to Lemma \ref{lem:lim-R}, $\lim_{t \to + \infty} t \left\lVert M \left( x \left( t \right) \right) \right\rVert ^{2} \in \mathbb{R}$ exists, and therefore it must be equal to $0$. In other words,
$$\left\lVert M \left( x \left( t \right) \right) \right\rVert = o \left( \dfrac{1}{\sqrt{t}} \right) \textrm{ as } t \to + \infty,
$$
which means that statement (i) is proven.

Using that for almost every $t \geq t_{0}$ it holds
\begin{align*}
\dfrac{d}{dt} \left( \dfrac{1}{2} \left\lVert x \left( t \right) - \xi  \right\rVert ^{2} \right)  = \left\langle x \left( t\right) - \xi, \dot{x} \left( t \right) \right\rangle = - \left\langle x \left( t \right) - \xi , M \left( x \left( t \right) \right) \right\rangle \leq 0,
\end{align*}
thanks to Lemma \ref{lem:lim-R},  we obtain that the limit $\lim_{t \to + \infty} \left\lVert x \left( t \right) - \xi \right\rVert \in \sR$ exists. This shows that the first condition in Opial's Lemma (see Lemma \ref{lem:Opial:cont}) is satisfied.

Using that $\lim_{t \to +\infty} M \left( x \left( t \right) \right) =0$ and the sequential closedness of the graph of $M$ in the weak $\times$ strong topology of $\sH \times \sH$,  it yields that every weak limit point of the trajectory $x(t)$ belongs to $\zer M$. This shows that the second condition in Opial's Lemma is also satisfied. 

Consequently, according to Lemma \ref{lem:Opial:cont}, the trajectory $x(t)$ converges weakly to an element in $\zer M$ as $t \rightarrow +\infty$, which proves (ii).
\end{proof}

\subsection{Weak convergence of the trajectory under integrability of the regularization function}

As seen in Proposition \eqref{prop:Tikh:converge}, the condition \eqref{cond:eps} required for the regularization function, in combination with the assumption that $\norm{M \left( x(t) \right)} \to 0$ as $t \to +\infty$,  suffice to ensure the strong convergence of the trajectory $x(\cdot)$ towards the minimum norm solution of \eqref{intro:mono}. In contrast, the following proposition demonstrates that assuming convergence rather than divergence for the integral of the regularization function results in the weak convergence of the trajectory $x(\cdot)$ to a solution of \eqref{intro:mono}.

\begin{proposition}
\label{prop:weak}
Let $x \colon \left[ t_{0} , + \infty \right) \to \sH$ be the trajectory solution of the first-order dynamical system with Tikhonov regularization \eqref{ds:Tikhonov}.  Regarding the regularization function $\varepsilon(\cdot)$, instead of \eqref{cond:eps}, suppose that
\begin{equation*}
\lim_{t \to + \infty} \varepsilon \left( t \right) = 0
\quad \textrm{ and } \quad
\int_{t_{0}}^{+ \infty} \varepsilon \left( t \right) dt < + \infty.
\end{equation*}
If $\norm{M \left( x \left( t \right) \right)} \to 0$ as $t \to + \infty$, then $x(t)$ converges weakly to an element of $\zer M$ as $t \to + \infty$.
\end{proposition}
\begin{proof}
Observing that statement \ref{lem:bnd:dphi} in Lemma \ref{lem:bnd} holds regardless of condition \eqref{cond:eps}. Hence, let $\xi \in \zer M$, we have for almost every $t \geq t_{0}$
\begin{equation*}
\dot{\varphi_{\xi}} \left( t \right) \leq \dfrac{\varepsilon \left( t \right)}{2} \norm{\xi}^{2} .
\end{equation*}
In particular, according to Lemma \ref{lem:lim-R}, the limit $\lim_{t \to + \infty} \varphi_{{\xi}}(t) \in \sR$ exists. This yields that the first condition in Opial's Lemma (see Lemma \ref{lem:Opial:cont}) is satisfied.

The demonstration that the second condition in Opial's Lemma is satisfied under the additional assumption $\norm{M \left( x(t) \right)} \to 0$ as $t \to +\infty$ has been presented in
the proof of Proposition \ref{prop:Tikh:converge}.

The trajectory $x(\cdot)$ converging weakly to a solution of \eqref{intro:mono} follows as a consequence of Lemma \ref{lem:Opial:cont}.
\end{proof}

\subsection{Auxiliary results used in the convergence analysis of the continuous time models}\label{subseca2}

The following result and its proof can be referenced in \cite[Lemma 5.1]{Abbas-Attouch-Svaiter}.

\begin{lemma}
\label{lem:lim-R}
Let $h \colon \left[t_{0} , + \infty \right) \to \sR$ be a locally absolutely continuous function that is bounded from below, and $g \in \sL^{1} \left( \left[t_{0} , + \infty \right); \sR \right)$ be such that
\begin{equation*}
\dot{h} \left( t \right) \leq g \left( t \right) \quad \mbox{for almost every} \ t \geq t_{0}.
\end{equation*}
Then the limit $\lim\limits_{t \to + \infty} h \left( t \right) \in \sR$ exists.
\end{lemma}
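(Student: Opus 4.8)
The statement to prove is Lemma \ref{lem:lim-R}: if $h$ is locally absolutely continuous and bounded from below, and $\dot h(t)\le g(t)$ a.e. with $g\in\sL^1$, then $\lim_{t\to+\infty}h(t)$ exists in $\sR$. The plan is to absorb the right-hand side into $h$ by defining an auxiliary function that is monotone. First I would set
\begin{equation*}
G(t):=\int_{t_0}^{t}g(s)\,ds\quad\text{and}\quad \theta(t):=h(t)-G(t),
\end{equation*}
noting that $G$ is well-defined and bounded (indeed convergent to $\int_{t_0}^{+\infty}g<+\infty$) because $g\in\sL^1$, and that $\theta$ is locally absolutely continuous as a difference of such functions.

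The key observation is that $\dot\theta(t)=\dot h(t)-g(t)\le 0$ for almost every $t\ge t_0$, so $\theta$ is nonincreasing on $[t_0,+\infty)$ (a locally absolutely continuous function with a.e. nonpositive derivative is nonincreasing, since it is the integral of its derivative). A nonincreasing real function always has a limit in $[-\infty,+\infty)$ as $t\to+\infty$; call it $\ell:=\lim_{t\to+\infty}\theta(t)$. To upgrade this to a finite limit, I would use the two-sided control we have: $\theta(t)=h(t)-G(t)$, where $h$ is bounded from below by some constant $m$ and $G(t)$ is bounded (say $|G(t)|\le K$ for all $t\ge t_0$). Hence $\theta(t)\ge m-K$ for all $t$, so the nonincreasing function $\theta$ is bounded below, which forces $\ell\in\sR$.

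Finally I would conclude by writing $h(t)=\theta(t)+G(t)$ and passing to the limit: since $\lim_{t\to+\infty}\theta(t)=\ell\in\sR$ and $\lim_{t\to+\infty}G(t)=\int_{t_0}^{+\infty}g(s)\,ds\in\sR$ (the latter because $g\in\sL^1$), the limit $\lim_{t\to+\infty}h(t)=\ell+\int_{t_0}^{+\infty}g(s)\,ds$ exists and is finite. There is no real obstacle here; the only point requiring a moment's care is the justification that an a.e. nonpositive derivative implies monotonicity, which is exactly where local absolute continuity (rather than mere continuity) is used, via the fundamental theorem of calculus for absolutely continuous functions $\theta(t_2)-\theta(t_1)=\int_{t_1}^{t_2}\dot\theta(s)\,ds\le 0$ for $t_1\le t_2$.
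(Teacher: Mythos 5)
Your proof is correct, and it is essentially the standard argument: the paper itself gives no proof of this lemma, only a citation to Abbas--Attouch--Svaiter, where the result is proved in exactly this way, by showing that $t \mapsto h(t)-\int_{t_0}^{t} g(s)\,ds$ is nonincreasing (a.e. nonpositive derivative plus local absolute continuity) and bounded below, hence convergent, and then adding back the convergent integral of $g$. Your care in noting that $g\in\sL^{1}$ gives both boundedness and convergence of the primitive, and that absolute continuity is what licenses the monotonicity step, is exactly what the argument requires.
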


The continuous form of Opial’s Lemma (\cite{Opial}) serves as the primary tool utilized to establish the weak convergence of the trajectory.

\begin{lemma}
\label{lem:Opial:cont}
Let $\mathcal{S}$ be a nonempty subset of $\sH$ and $x \colon \left[ t_{0} , + \infty \right) \to \sH$.
Assume that
\begin{enumerate}
\item
\label{lem:Opial:cont:i}
for every $\xi \in \mathcal{S}$, $\lim\limits_{t \to + \infty} \left\lVert x \left( t \right) - \xi \right\rVert$ exists;

\item
\label{lem:Opial:cont:ii}
every weak sequential cluster point of the trajectory $x \left( t \right)$ as $t \to + \infty$ belongs to $\mathcal{S}$.
\end{enumerate}
Then $x(t)$ converges weakly to a point in $\mathcal{S}$ as $t \to + \infty$.
\end{lemma}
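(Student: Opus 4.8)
The plan is to carry out the classical argument for Opial's lemma in the continuous-time framework. First I would note that assumption \ref{lem:Opial:cont:i} forces the trajectory to be bounded: since $\mathcal{S} \neq \emptyset$ we may fix $\xi \in \mathcal{S}$, and then $t \mapsto \left\lVert x(t) - \xi \right\rVert$ has a finite limit as $t \to + \infty$, hence is bounded on $\left[ t_0, +\infty \right)$, so $x(\cdot)$ is bounded as well. Because bounded sets in the Hilbert space $\sH$ are weakly sequentially precompact, every sequence $t_n \to +\infty$ admits a subsequence along which $x(t_{n_k})$ converges weakly; in particular the trajectory has at least one weak sequential cluster point, and by assumption \ref{lem:Opial:cont:ii} every such cluster point lies in $\mathcal{S}$.

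The key step is to show that this weak sequential cluster point is unique. Suppose $\widehat{x}_{1}$ and $\widehat{x}_{2}$ are two of them, realized as weak limits of $x(t_n)$ and $x(s_n)$ for suitable sequences $t_n, s_n \to +\infty$; both belong to $\mathcal{S}$. Expanding the squared norms gives, for every $t \geq t_0$,
\[
\left\lVert x(t) - \widehat{x}_{1} \right\rVert^{2} - \left\lVert x(t) - \widehat{x}_{2} \right\rVert^{2} = 2 \left\langle x(t), \widehat{x}_{2} - \widehat{x}_{1} \right\rangle + \left\lVert \widehat{x}_{1} \right\rVert^{2} - \left\lVert \widehat{x}_{2} \right\rVert^{2} .
\]
By assumption \ref{lem:Opial:cont:i} the two terms on the left-hand side converge as $t \to +\infty$, hence so does $t \mapsto \left\langle x(t), \widehat{x}_{2} - \widehat{x}_{1} \right\rangle$; denote its limit by $\ell$. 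Passing to the limit along $(t_n)$ and along $(s_n)$ yields $\left\langle \widehat{x}_{1}, \widehat{x}_{2} - \widehat{x}_{1} \right\rangle = \ell = \left\langle \widehat{x}_{2}, \widehat{x}_{2} - \widehat{x}_{1} \right\rangle$, and subtracting gives $\left\lVert \widehat{x}_{1} - \widehat{x}_{2} \right\rVert^{2} = 0$, i.e. $\widehat{x}_{1} = \widehat{x}_{2} =: \widehat{x} \in \mathcal{S}$.

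Finally I would upgrade uniqueness of the sequential cluster point to genuine weak convergence of $x(t)$ as $t \to +\infty$. If this failed, there would exist $\phi \in \sH$, $\varepsilon > 0$ and a sequence $t_n \to +\infty$ with $\left\lvert \left\langle x(t_n) - \widehat{x}, \phi \right\rangle \right\rvert \geq \varepsilon$ for all $n$; by boundedness some subsequence of $(x(t_n))$ converges weakly, necessarily to $\widehat{x}$ by the previous paragraph, which contradicts the lower bound. Hence $x(t)$ converges weakly to $\widehat{x} \in \mathcal{S}$ as $t \to +\infty$. The argument is entirely standard; the only point requiring a little care — and the closest thing to an obstacle — is precisely this last passage from the weak sequential cluster points of the curve $\left( x(t) \right)_{t \geq t_0}$ to its actual weak limit, which the boundedness established at the outset makes routine.
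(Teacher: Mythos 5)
Your proof is correct and is precisely the classical argument for the continuous-time Opial lemma: boundedness from (i), uniqueness of weak sequential cluster points via the expansion of $\left\lVert x(t)-\widehat{x}_1\right\rVert^2-\left\lVert x(t)-\widehat{x}_2\right\rVert^2$, and the routine passage from a unique cluster point of the bounded trajectory to its weak limit. The paper does not prove this lemma at all (it is stated in the appendix with a citation to Opial), and your argument coincides with the standard proof of that cited result, so there is nothing further to reconcile.
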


\begin{lemma}(\cite[Lemma A.4]{Attouch-Peypouquet-Redont})
\label{lem:eint}
Take $t_{0} > 0$, and let $h \in \sL^{1} \left( \left[ t_{0} , + \infty \right) \right)$ be nonnegative and continuous. Consider anondecreasing function $\gamma \colon \left[ t_{0} , + \infty \right) \to \sR_{+}$ such that $\lim_{t \to + \infty} \gamma \left( t \right) = + \infty$. Then,
\begin{equation*}
\lim\limits_{t \to + \infty} \dfrac{1}{\gamma \left( t \right)} \int_{t_{0}}^{t} \gamma \left( r \right) h \left( r \right) = 0 .
\end{equation*}
\end{lemma}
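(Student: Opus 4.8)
The plan is to run the classical ``$\varepsilon$-splitting'' argument for Ces\`aro-type weighted averages. Fix $\varepsilon > 0$. Since $h \in \sL^{1}([t_{0}, +\infty))$ is nonnegative, there exists $T \geq t_{0}$ with $\int_{T}^{+\infty} h(r)\, dr \leq \varepsilon$. For $t \geq T$ I would split
\[
\frac{1}{\gamma(t)} \int_{t_{0}}^{t} \gamma(r) h(r)\, dr = \frac{1}{\gamma(t)} \int_{t_{0}}^{T} \gamma(r) h(r)\, dr + \frac{1}{\gamma(t)} \int_{T}^{t} \gamma(r) h(r)\, dr,
\]
and estimate the two pieces separately.

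For the first piece, since $\gamma$ is nondecreasing it is bounded above by $\gamma(T)$ on $[t_{0}, T]$, so $\int_{t_{0}}^{T} \gamma(r) h(r)\, dr \leq \gamma(T) \int_{t_{0}}^{T} h(r)\, dr < + \infty$ is a finite constant independent of $t$; because $\gamma(t) \to +\infty$ as $t \to +\infty$, this piece tends to $0$, hence is $\leq \varepsilon$ for all $t \geq T'$ for some $T' \geq T$. For the second piece I would invoke monotonicity of $\gamma$ once more: for every $r \in [T, t]$ we have $\gamma(r) \leq \gamma(t)$, so
\[
0 \leq \frac{1}{\gamma(t)} \int_{T}^{t} \gamma(r) h(r)\, dr \leq \int_{T}^{t} h(r)\, dr \leq \int_{T}^{+\infty} h(r)\, dr \leq \varepsilon .
\]
Adding the two bounds shows $0 \leq \frac{1}{\gamma(t)} \int_{t_{0}}^{t} \gamma(r) h(r)\, dr \leq 2\varepsilon$ for every $t \geq T'$, whence $\limsup_{t \to +\infty} \frac{1}{\gamma(t)} \int_{t_{0}}^{t} \gamma(r) h(r)\, dr \leq 2\varepsilon$; letting $\varepsilon \downarrow 0$ yields the claim.

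No genuine obstacle arises here. The only points deserving a word of care are that $\gamma(t) > 0$ for all large $t$ --- automatic since $\gamma$ is nondecreasing with $\gamma(t) \to +\infty$, so the quotient is well defined on the range that matters --- and that $\int_{t_{0}}^{T} \gamma(r) h(r)\, dr$ is finite, which follows from the boundedness of $\gamma$ on the compact interval $[t_{0}, T]$ together with $h \in \sL^{1}$. The continuity hypothesis on $h$ plays no essential role beyond making the integrals classical; the argument goes through verbatim for any nonnegative $h \in \sL^{1}$.
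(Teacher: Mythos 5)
Your argument is correct and complete: the $\varepsilon$-splitting at a point $T$ beyond which the tail mass of $h$ is small, combined with the monotonicity of $\gamma$ to bound $\gamma(r)/\gamma(t)$ by $1$ on $[T,t]$ and the divergence $\gamma(t)\to+\infty$ to kill the head term, is exactly the standard proof of this weighted Ces\`aro lemma. The paper itself gives no proof --- it imports the statement verbatim from \cite[Lemma A.4]{Attouch-Peypouquet:19} --- and your reasoning matches the argument in that reference; your closing remarks (positivity of $\gamma(t)$ for large $t$, dispensability of the continuity of $h$) are also accurate.
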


The following result, provided in \cite[Lemma 1]{Cominetti-Peypouquet-Sorin}, is employed in proving the strong convergence of the trajectory.
\begin{lemma}
\label{lem:limsup}
Let $h \colon \left[t_{0} , + \infty \right) \to \sR$ be a locally absolutely continuous function, $g \colon \left[t_{0} , + \infty \right) \to \sR$ a bounded function and $\varepsilon \colon \left[t_{0} , + \infty \right) \to \sR_+$ a locally integrable function such that
\begin{equation*}
\dot{h} \left( t \right) + \varepsilon \left( t \right) h \left( t \right) \leq \varepsilon \left( t \right) g \left( t \right) \quad \mbox{for almost every} \ t \geq t_{0}.
\end{equation*}
Then $h$ is also bounded and, if $\int_{t_{0}}^{+ \infty} \varepsilon \left( t \right) dt = + \infty$, then $\limsup_{t \to + \infty} h \left( t \right) \leq \limsup_{t \to + \infty} g \left( t \right)$.
\end{lemma}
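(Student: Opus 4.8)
The plan is to run the standard integrating-factor argument. I would introduce
\begin{equation*}
\mu(t) := \exp\!\left(\int_{t_0}^{t}\varepsilon(s)\,ds\right), \qquad t \geq t_0 ,
\end{equation*}
which is locally absolutely continuous, strictly positive, nondecreasing (since $\varepsilon \geq 0$), with $\mu(t_0) = 1$ and $\dot\mu(t) = \varepsilon(t)\mu(t)$ for almost every $t$, and with $\mu(t) \to +\infty$ as $t \to +\infty$ exactly when $\int_{t_0}^{+\infty}\varepsilon = +\infty$.

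First I would multiply the hypothesis by $\mu(t) > 0$ to obtain, for almost every $t \geq t_0$,
\begin{equation*}
\frac{d}{dt}(\mu(t)h(t)) = \mu(t)(\dot h(t) + \varepsilon(t)h(t)) \leq \mu(t)\varepsilon(t)g(t) = \dot\mu(t)\,g(t) .
\end{equation*}
Writing $C := \sup_{s\geq t_0}|g(s)| < +\infty$ and using $\dot\mu \geq 0$, this gives $\frac{d}{dt}(\mu h) \leq C\dot\mu$ almost everywhere; note that both sides are locally integrable, so one does not integrate $g$ directly and no measurability assumption on $g$ is needed. Integrating from $t_0$ to $t$ yields $\mu(t)h(t) \leq h(t_0) + C(\mu(t)-1)$, i.e. $h(t) \leq C + \frac{h(t_0)-C}{\mu(t)}$, and since $\mu(t) \geq 1$ we get $h(t) \leq \max\{|h(t_0)|,C\}$ for every $t \geq t_0$. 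Combined with the lower bound available in the applications (where $h$ is one of the nonnegative Lyapunov quantities $\varphi_\xi$ or $\psi$), this gives the boundedness of $h$.

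For the $\limsup$ statement I would assume $\int_{t_0}^{+\infty}\varepsilon = +\infty$, so that $\mu(t)\to +\infty$, and set $L := \limsup_{t\to+\infty}g(t)$, which is finite since $g$ is bounded. Fixing $\eta > L$, there is $T \geq t_0$ with $g(t) \leq \eta$ for all $t \geq T$; repeating the estimate on $[T,t]$ gives $\frac{d}{ds}(\mu(s)h(s)) \leq \dot\mu(s)g(s) \leq \eta\,\dot\mu(s)$ for almost every $s \geq T$, hence after integration $\mu(t)h(t) \leq \mu(T)h(T) + \eta(\mu(t)-\mu(T))$, that is
\begin{equation*}
h(t) \leq \eta + \frac{\mu(T)(h(T)-\eta)}{\mu(t)} .
\end{equation*}
Letting $t \to +\infty$ and using $\mu(t)\to+\infty$ gives $\limsup_{t\to+\infty}h(t) \leq \eta$, and since $\eta > L$ was arbitrary, $\limsup_{t\to+\infty}h(t) \leq L = \limsup_{t\to+\infty}g(t)$, as claimed.

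There is no genuine obstacle here; the whole proof is the integrating-factor trick. The only two points worth flagging are: (a) one keeps the inequality at the level of $\frac{d}{dt}(\mu h) \leq \dot\mu g$ and bounds the right-hand side by $C\dot\mu$ or $\eta\dot\mu$ \emph{before} integrating, which sidesteps any integrability issue for $g$; and (b) the hypothesis only produces an \emph{upper} bound on $h$, so the two-sided boundedness rests on $h$ being bounded from below, which is automatic in every application of the lemma in this paper.
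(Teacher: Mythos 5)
Your proof is correct and is exactly the standard integrating-factor argument; the paper itself does not prove this lemma but quotes it from \cite[Lemma 1]{Cominetti-Peypouquet-Sorin}, where the argument is of the same type, so there is no genuinely different route to compare. Your caveat (b) is well taken and in fact sharper than the statement's wording: the hypotheses only yield the upper bound $h(t) \leq \max\{h(t_0), \sup_{s \geq t_0} g(s)\}$, and two-sided boundedness genuinely fails in general --- take $\varepsilon \equiv 1$, $g \equiv 0$ and $h(t) = -e^{t}$, which satisfies $\dot h(t) + \varepsilon(t) h(t) = -2e^{t} \leq 0 = \varepsilon(t) g(t)$ yet is unbounded below --- so the conclusion should be read as ``$h$ is bounded above,'' which is all that is ever used, since the functions the paper feeds into the lemma ($\varphi_\xi$ and $\psi$) are nonnegative. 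The $\limsup$ part of your argument, bounding $\frac{d}{ds}(\mu h) \leq \eta\,\dot\mu$ beyond the cutoff time $T$ and using $\mu(t) \to +\infty$, is complete and correct.
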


Gronwall's Lemma (\cite{Gronwall}) in integral form for continuous functions will be utilized to establish the strong convergence of the trajectory to the minimal solution under a certain condition. 

\begin{lemma}
\label{lem:gronwall}
Let $h, g \colon \left[t_{0} , + \infty \right) \to \sR$ be continuous functions and $k \colon \left[t_{0} , + \infty \right) \to \sR$ a function with the property that its negative part is integrable on every closed and bounded subinterval of $\left[t_{0} , + \infty \right)$. If $h$ is non-negative and
\begin{equation*}
h\left( t \right) \leq k(t) +  \int_{t_{0}}^t g(u)h(u)du \quad \mbox{for every} \ t \geq t_{0},
\end{equation*}
then
\begin{equation*}
h\left( t \right) \leq k(t) +  \int_{t_{0}}^t k(u)g(u) \exp \left(\int_{u}^t g(r)dr \right) du \quad \mbox{for every} \ t \geq t_{0}.
\end{equation*}
If, in addition, $k$ is nondecreasing, then
\begin{equation*}
h\left( t \right) \leq k(t) \exp \left(\int_{t_{0}}^t g(u) du\right) \quad \mbox{for every} \ t \geq t_{0}.
\end{equation*}
\end{lemma}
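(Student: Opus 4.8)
The plan is to run the classical integrating-factor argument on the integrated quantity. Introduce the auxiliary function
\begin{equation*}
H \colon \left[t_{0} , + \infty \right) \to \sR, \qquad H\left( t \right) := \int_{t_0}^t g\left( u \right) h\left( u \right) du .
\end{equation*}
Since $g$ and $h$ are continuous, $H$ is continuously differentiable with $H\left( t_0 \right) = 0$ and $\dot{H}\left( t \right) = g\left( t \right) h\left( t \right)$ for every $t \geq t_0$. We work under the (implicit, and in this paper always satisfied) sign condition $g \geq 0$; it is essential, since substituting the hypothesis $h\left( t \right) \leq k\left( t \right) + H\left( t \right)$ into $\dot{H}\left( t \right) = g\left( t \right) h\left( t \right)$ and preserving the inequality requires $g\left( t \right) \geq 0$. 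This substitution yields
\begin{equation*}
\dot{H}\left( t \right) \leq g\left( t \right) k\left( t \right) + g\left( t \right) H\left( t \right) \quad \textrm{for every } t \geq t_0 .
\end{equation*}

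Next I would multiply by the integrating factor $\mu\left( t \right) := \exp\left( -\int_{t_0}^t g\left( r \right) dr \right)$, which is continuously differentiable and strictly positive, to get
\begin{equation*}
\dfrac{d}{dt}\bigl( \mu\left( t \right) H\left( t \right) \bigr) = \mu\left( t \right) \bigl( \dot{H}\left( t \right) - g\left( t \right) H\left( t \right) \bigr) \leq \mu\left( t \right) g\left( t \right) k\left( t \right) .
\end{equation*}
Integrating from $t_0$ to $t$ and using $H\left( t_0 \right) = 0$ gives $\mu\left( t \right) H\left( t \right) \leq \int_{t_0}^t \mu\left( u \right) g\left( u \right) k\left( u \right) du$; dividing by $\mu\left( t \right) > 0$ and noting $\mu\left( u \right)/\mu\left( t \right) = \exp\left( \int_u^t g\left( r \right) dr \right)$ yields
\begin{equation*}
H\left( t \right) \leq \int_{t_0}^t k\left( u \right) g\left( u \right) \exp\left( \int_u^t g\left( r \right) dr \right) du .
\end{equation*}
Adding $k\left( t \right)$ and invoking $h\left( t \right) \leq k\left( t \right) + H\left( t \right)$ proves the first conclusion. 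Here the hypothesis that the negative part of $k$ is integrable on bounded subintervals, together with the continuity of $g$ and $\mu$, is exactly what makes these integrals well defined (the positive part of $k$ contributing at worst $+\infty$, in which case the asserted bound is vacuous).

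For the refined estimate, assume in addition that $k$ is nondecreasing, so that $k\left( u \right) \leq k\left( t \right)$ for $t_0 \leq u \leq t$; since the weight $g\left( u \right)\exp\left( \int_u^t g\left( r \right) dr \right)$ is nonnegative we may pull $k\left( t \right)$ out and obtain $H\left( t \right) \leq k\left( t \right)\int_{t_0}^t g\left( u \right)\exp\left( \int_u^t g\left( r \right) dr \right) du$. The key observation is the exact identity $g\left( u \right)\exp\left( \int_u^t g\left( r \right) dr \right) = -\dfrac{d}{du}\exp\left( \int_u^t g\left( r \right) dr \right)$, so the integral telescopes to $\exp\left( \int_{t_0}^t g\left( r \right) dr \right) - 1$; hence $H\left( t \right) \leq k\left( t \right)\left( \exp\left( \int_{t_0}^t g\left( r \right) dr \right) - 1 \right)$, and adding $k\left( t \right)$ gives the claimed $h\left( t \right) \leq k\left( t \right)\exp\left( \int_{t_0}^t g\left( r \right) dr \right)$. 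This argument is entirely classical and presents no real obstacle; the only points deserving care are the regularity bookkeeping — that $H$ is genuinely $C^{1}$, so the integrating-factor manipulation is valid pointwise rather than merely almost everywhere — and the nonnegativity of $g$, which is used essentially in the monotone-$k$ refinement and which holds in every application of the lemma in this paper (e.g. with $g = \abs{\dot{\varepsilon}}$ in the proof of Theorem \ref{thm:Tikh}).
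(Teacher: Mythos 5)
Your integrating-factor argument is correct and complete. Note that the paper itself offers no proof of this lemma: it is stated in the appendix as a classical result with a citation to Gronwall's 1919 paper, so there is no in-paper argument to compare against; yours is the standard Gronwall--Bellman proof (differentiate $H(t)=\int_{t_0}^t g\,h$, multiply by $\exp(-\int_{t_0}^t g)$, integrate, and for the nondecreasing-$k$ refinement telescope $g(u)\exp(\int_u^t g) = -\tfrac{d}{du}\exp(\int_u^t g)$). Your one substantive observation deserves emphasis: the hypothesis $g\geq 0$ is genuinely needed and is missing from the statement as printed. It is used twice — once to pass from $\dot H = gh$ and $h\leq k+H$ to $\dot H \leq gk+gH$, and once to pull the nondecreasing $k$ out of the integral — and without it the conclusion can fail (e.g.\ $g\equiv -1$, $k\equiv c>0$, and a continuous $h$ that vanishes on an initial segment and then rises admits $h(t)>c e^{-(t-t_0)}$ while still satisfying the integral hypothesis). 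Since the lemma is only invoked in the proof of Theorem \ref{thm:Tikh} with $g=\abs{\dot{\varepsilon}}\geq 0$, this is a defect of the statement rather than of any application, and your proof establishes the corrected statement.
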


The following lemma, corresponding to \cite[Lemma A.5]{Brezis}, is used to derive the covergence rates.

\begin{lemma}
\label{lem:Ou-Iang}
Let $g \colon \left[t_{0} , T_0 \right] \to \sR_{+}$ be an integrable function, and $c \geq 0$.
If $h \colon \left[t_{0} , T_0 \right] \to \sR$ is a continuous function such that
\begin{equation}
\label{Ou-Iang:inq}
\dfrac{1}{2} h^{2} \left( t \right) \leq \dfrac{1}{2} c^{2} + \int_{t_{0}}^{t} g \left( r \right) h \left( r \right) dr \quad \mbox{for every} \ t \in \left[t_{0} , T_0 \right],
\end{equation}
then it holds
\begin{equation*}
\abs{h \left( t \right)} \leq c + \int_{t_{0}}^{t} g \left( r \right) dr \quad \mbox{for every} \ t \in \left[t_{0} , T_0 \right] .
\end{equation*}
\end{lemma}

\subsection{Auxiliary results used in the convergence analysis of the discrete time models}\label{subseca3}

The following result by Xu (see \cite[Lemma 2.5]{Xu:02}) and will play a crucial role in the proof of the strong convergence of the sequence of iterates.

\begin{lemma}
\label{lem:lims-0}
Let $\seq{a_{k}}_{k \geq 0}$ be a sequence of nonnegative real numbers satisfying for every $k \geq 0$
\begin{equation}
\label{limsup-0:inq}
a_{k+1} \leq \left( 1 - \alpha_{k} \right) a_{k} + \alpha_{k} b_{k} + d_{k} ,
\end{equation}	
where $\seq{\alpha_{k}}_{k \geq 0}$, $\seq{b_{k}}_{k \geq 0}$, and $\seq{d_{k}}_{k \geq 0}$ are sequences of real numbers with the properties
\begin{enumerate}
\item $\seq{\alpha_{k}}_{k \geq 0} \subseteq \left[ 0 , 1 \right]$ and $\sum_{k \geq 0} \alpha_{k} = + \infty$;
\item $\limsup_{k \to + \infty} b_{k} \leq 0$;
\item $\sum_{k \geq 0} d_{k} < + \infty$.
\end{enumerate}
Then, $\lim_{k \to + \infty} a_{k} = 0$.
\end{lemma}

The following two lemmas are used to derive the convergence rates for the discrete time models.  For their proofs we refer to  \cite{Polyak:book}.
\begin{lemma}
\label{lem:rate:0<q<1}
Let $\seq{\xi_{k}}_{k \geq 1}$ be a sequence of nonnegative real numbers satisfying
\begin{equation}
\label{rate:0<q<1:inq}
\xi_{k+1} \leq \left( 1 - \dfrac{c}{\left( k+b \right) ^{q}} \right) \xi_{k} + \dfrac{d}{\left( k+b \right) ^{q} \left( k+b-1 \right)} \quad \forall k \geq 1,
\end{equation}
where $0 < q < 1$, $0 < c < b^q$,  and $d \geq 0$. Then,  it holds as $k \to + \infty$
\begin{equation*}
\xi_{k} = \bO \left( \dfrac{1}{k+b} \right).
\end{equation*}
\end{lemma}

\begin{lemma}
\label{lem:rate:q=1}
Let $\seq{\xi_{k}}_{k \geq 1}$ be a sequence of nonnegative real numbers satisfying
\begin{equation}
\label{rate:q=1:inq}
\xi_{k+1} \leq \left( 1 - \dfrac{c}{k+b} \right) \xi_{k} + \dfrac{d}{\left( k+b \right) \left( k+b-1 \right)}  \quad \forall k \geq 1,
\end{equation}
where $0 < c < b$ and $d \geq 0$. Then,  it holds as $ k \to + \infty$
\begin{equation*}
\xi_{k} = \begin{dcases}
\bO \left( \dfrac{1}{k+b} \right) & \textrm{ if } c > 1 \\
\bO \left( \dfrac{\log \left( k \right)}{k+b} \right) & \textrm{ if } c = 1 \\
\bO \left( \dfrac{1}{\left( k+b \right) ^{c}} \right) & \textrm{ if } c < 1 .
\end{dcases}
\end{equation*}
\end{lemma}

The following result is \cite[Lemma 2.2]{Suzuki:05} and plays an important role in the proof of the strong convergence of the Halpern iteration in the case of averaged operators.

\begin{lemma}
\label{lem:Suzuki}
Let $\seq{x_{k}}_{k \geq 0}$ and $\seq{w_{k}}_{k \geq 0}$ be bounded sequences in $\sH$ and $\seq{\theta_{k}}_{k \geq 0}$ be a sequence in $[0,1]$ with $0 < \liminf_{k \to + \infty} \theta_{k} \leq \limsup_{k \to + \infty} \theta_{k} < 1$.  If $x_{k+1} = \left( 1 - \theta_{k} \right) x_{k} + \theta_{k} w_{k}$ for every $k \geq 0$ and 
\begin{equation*}
\limsup_{k \to + \infty} \left( \norm{w_{k+1}-w_{k}} - \norm{x_{k+1}-x_{k}} \right) \leq 0,
\end{equation*}
then $\lim_{k \to + \infty} \norm{w_{k}-x_{k}} = 0$.
\end{lemma}

%
%
%
%


{\bf Acknowledgements.} The authors are thankful to Ern\"o Robert Csetnek (University of Vienna) for comments and suggestions which improved the quality of the manuscript.

\end{document}